\documentclass[aop,preprint]{imsart}

\RequirePackage[OT1]{fontenc}
\RequirePackage{amsthm,amsmath}
\RequirePackage[numbers]{natbib}
\RequirePackage[colorlinks,citecolor=blue,urlcolor=blue]{hyperref}


\startlocaldefs

\usepackage{amsmath}
\usepackage{amsfonts}
\usepackage{amssymb}
\usepackage{mathrsfs}
\usepackage{amsthm}
\usepackage{latexsym}
\usepackage{mathrsfs}
\usepackage{amsxtra}
\usepackage{amscd}
\usepackage{mathtools}
\usepackage{verbatim}
\usepackage{enumerate}
\usepackage{color}

\numberwithin{equation}{section}
\theoremstyle{plain}
\newtheorem{Theorem}{Theorem}[section]
\newtheorem{Definition}[Theorem]{Definition}
\newtheorem{Proposition}[Theorem]{Proposition}

\newtheorem{Assumption}[Theorem]{Assumption}

\newtheorem{Lemma}[Theorem]{Lemma}
\newtheorem{Corollary}[Theorem]{Corollary}
\newtheorem{Remark}[Theorem]{Remark}

\renewcommand{\epsilon}{\varepsilon}

\newcommand{\tr}{\operatorname{Tr}}

\newcommand{\defeq}{\coloneqq }

\newcommand{\cpol}{C_{\mathrm{Pol}}}

\def \E{\mathbb{E}}

\def \P{\mathbb{P}}

\def \R{\mathbb{R}}

\def \N{\mathbb{N}}

\def\Ac{{\cal A}}

\def\Cc{{\cal C}}

\def\Fc{{\cal F}}

\def\Jc{{\cal J}}

\def\Lc{{\cal L}}

\def\Tc{{\cal T}}
\def\Uc{{\cal U}}

\def\Acu{\overline{\Ac}}
\def\Acb{\underline{\Ac}}

\def \eps{\varepsilon}

\def \0{\mathbf{0}}
\def \H{\textsc{h}}

\def\x{\times}

\def\1{{\bf 1}}

\endlocaldefs

\makeatletter
\let\@fnsymbol\@arabic
\makeatother

\begin{document}

\begin{frontmatter}
\title{Path-dependent equations and viscosity solutions in infinite dimension\thanksref{T1}}
\runtitle{Path-dependent equations in infinite dimension}
\thankstext{T1}{This research has been partially supported by the Italian PRIN project ``Problemi differenziali di evoluzione: approcci deterministici e stocastici e loro interazioni'' and by the GNAMPA project ``Equazioni stocastiche con memoria e applicazioni'' (2014).}

\begin{aug}
\author{\fnms{Andrea} \snm{Cosso}\ead[label=e1]{andrea.cosso@polimi.it}},
\author{\fnms{Salvatore} \snm{Federico}\thanksref{t1}\ead[label=e2]{salvatore.federico@unisi.it}},
\author{\fnms{Fausto} \snm{Gozzi}\ead[label=e3]{fgozzi@luiss.it}},\\
\author{\fnms{Mauro} \snm{Rosestolato}\thanksref{t2}\ead[label=e4]{mauro.rosestolato@polytechnique.edu}},
\and
\author{\fnms{Nizar} \snm{Touzi}\thanksref{t3}\ead[label=e5]{nizar.touzi@polytechnique.edu}
}

\thankstext{t1}{Corresponding author.}
\thankstext{t2}{Supported by  Deutscher Akademischer Austauschdienst.}
\thankstext{t3}{Supported by the ERC Advanced Grant 321111, and by the Chairs {\it Financial Risk} and {\it Finance and Sustainable Development}.}
\runauthor{A.~COSSO, S.~FEDERICO, F.~GOZZI, M.~ROSESTOLATO, AND N.~TOUZI}

\affiliation{Politecnico di Milano, Universit\`a di
  Siena, LUISS University,\\ \'Ecole
  Polytechnique, and \'Ecole Polytechnique}

\address{
  Dipartimento di Matematica\\
  Politecnico di Milano\\
  via Bonardi 9\\
  20133 Milano\\
  Italy\\
  \printead{e1}\\
}

\address{Dipartimento di Economica Politica e Statistica\\
  Universit\`a di Siena\\
  P.zza S.\ Francesco 7--8
  \\53100 Siena\\
  Italy\\
  \printead{e2}}

\address{Dipartimento di Economia e Finanza\\
  Libera Universit\`a degli Studi Sociali ``Guido Carli'' \\
  Viale Romania 32\\
  00197 Roma\\
  Italy\\
  \printead{e3}}

\address{CMAP\\
  Ecole Polytechnique\\
  91128 Palaiseau\\
  France\\
  \printead{e4}\\
  \phantom{E-mail:\ }\printead*{e5}}
\end{aug}

\begin{abstract}
 Path-dependent PDEs (PPDEs) are natural objects to study when one deals with non Markovian models. Recently, after the introduction  of the so-called pathwise (or functional or Dupire) calculus (see \cite{Dupire}), in the case of finite-dimensional underlying space
various papers have been devoted to studying the well-posedness of such kind of equations, both from the point of view of regular
solutions (see e.g.\ \cite{Dupire,CF16}) and viscosity solutions
(see e.g.\ \cite{EkrenKellerTouziZhang}).
In this paper, motivated by the study of models driven by path-dependent stochastic PDEs, we give a first well-posedness result for viscosity solutions of PPDEs when the underlying space is a
separable 
 Hilbert space.
 We also observe that, 
in contrast with
the finite-dimensional case, our well-posedness result, even in the Markovian case, applies to equations which
cannot be treated, up to now, with the known theory of viscosity solutions.
\end{abstract}

\begin{keyword}[class=MSC]
 \kwd{35D40}
 \kwd{35R15}
 \kwd{60H15}
 \kwd{60H30}.
\end{keyword}

\begin{keyword}
\kwd{Viscosity solutions}
\kwd{path-dependent stochastic differential equations}
\kwd{path-dependent partial differential equations}
\kwd{partial differential equations in infinite dimension}.
\end{keyword}

\end{frontmatter}

\section{Introduction}
Given $T>0$ and a real separable Hilbert space $H$,  let $C([0,T];H)$ be the Banach space of continuous functions from $[0,T]$ to $H$, endowed with the supremum norm $|\mathbf x|_\infty \defeq  \sup_{t\in[0,T]}|\mathbf x_t|$, for all $\mathbf x\in C([0,T];H)$. Let $\Lambda\coloneqq [0,T]\times C([0,T];H)$ and consider the following pseudometric on $\Lambda$:
\[
\textbf{d}_\infty\big((t,\mathbf x),(t',\mathbf x')\big)  \coloneqq   |t-t'|+|\mathbf x_{.\wedge t}-\mathbf x'_{.\wedge t'}|_\infty, \qquad (t,\mathbf x),\ (t',\mathbf x')\in\Lambda.
\]
The pseudo-metric $\mathbf{d}_\infty$ allows to account for the \emph{non-anticipativity} condition: each  function $v\colon (\Lambda,\mathbf d_\infty) \rightarrow E$,  where $E$ is a Banach space, which is  measurable with respect to the Borel $\sigma$-algebra induced by $\textbf{d}_\infty$, is such that  $v(t,\mathbf x)=v(t,\mathbf x_{\cdot\wedge t})$ for all $(t,\mathbf x)\in\Lambda$. Let $A$ be the generator of a strongly continuous semigroup on $H$, and  let $b\colon\Lambda\rightarrow H$, $\sigma\colon\Lambda\rightarrow L(K;H)$, where $K$ is another
real separable Hilbert space (the noise space) 
and $L(K;H)$ is the vector space of linear and continuous functions $K\rightarrow H$. In this paper, we study the well-posedness of the following infinite-dimensional path-dependent partial differential equation (PPDE):
{\small{\begin{equation}
\label{PPDE-intro}
- \partial_t u - \langle A\mathbf x_t,\partial_{\mathbf x}u\rangle - \langle b(t,\mathbf x), \partial_{\mathbf x}u\rangle - \frac{1}{2}\text{Tr}\big[\sigma(t,\mathbf x)\sigma^*(t,\mathbf x)\partial_{\mathbf x\mathbf x}^2 u\big] - F(t,\mathbf x, u)  =  0,
\end{equation}}}
for all $t\in[0,T)$ and $\mathbf x\in C([0,T];H)$,
where $F\colon\Lambda\times\R \rightarrow \R$ and $\partial_t u$, $\partial_{\mathbf x}u$, $\partial_{\mathbf{xx}}^2u$ denote formally  the so-called pathwise (or functional or Dupire, see \cite{CF10b, CF13, Dupire}) derivatives. The unknown is a non-anticipative functional $u\colon\Lambda\rightarrow\R$. We are deliberately restricting the nonlinearity $F$ to depend only on $u$, and not on
$\partial_{\mathbf x}u$, in order to focus on our main well-posedness objective.
The treatment of nonlinearities involving the derivatives (e.g., the case of Hamilton-Jacobi-Bellman equations) needs different methods and involves non-trivial technical difficulties. For this reason, we leave a complete treatment of it for future research.
Nevertheless, in order to illustrate how the definition of viscosity solution here adopted can be
extended to the nonlinear case, in
 Section~\ref{S:Extension} 
we provide
a setting for the treatment of HJB equations
 and show existence
of viscosity solutions.
Then
we  specify  the steps 
needed to obtain uniqueness, 
assuming those results which are non-trivial and whose study is not among the aims of this paper.

{We emphasize that,} in addition to the infinite-dimensional feature of the equation \eqref{PPDE-intro},  coefficients $b,\sigma,$ and $F$ are path-dependent.  Such a path-dependency may be addressed with a standard PDE approach by introducing a ``second level'' of infinite-dimensionality, {that is} embedding the state space $H$ in a larger infinite-dimensional space, like $L^2(-T,0;H)$ and converting equation \eqref{PPDE-intro} into a PDE on this larger space (see e.g., in the context of \emph{delay equations} and when the original space $H$ is finite-dimensional, \cite{Chojnowska78}, \cite[Ch.\,10]{DPZ02}, or \cite[Sec.\ 2.6.8]{FGS14}).
The latter methodology turns out to be problematic when the data, as in our case, are required to have continuity properties with respect to the supremum norm, as the PDE should be considered 
in
the space of continuous functions,
a non-reflexive Banach space.
 Indeed most of the results on well posedness of infinite dimensional PDEs are proved when the underlying space is a Hilbert space (this is, in particular, the case for the viscosity solutions theory, see e.g.\ \cite[Ch.\,3]{FGS14}).
However, we should mention that some attempts have been made along this direction, we refer to \cite{DGR1, DGR2, Ffinsto, FZ, FMT, MasieroEJP07,Roses2016b}.

When the space $H$ is finite-dimensional, PPDEs with a structure more general than  \eqref{PPDE-intro} have been investigated by means of a new concept of viscosity solution recently introduced in  \cite{EkrenKellerTouziZhang}, and further developed in \cite{EkrenTouziZhang1,EkrenTouziZhang2,RTZ14}. This new notion enlarges the class  of test functions, by defining the smoothness only ``with respect to the dynamics'' of the underlying stochastic system and requiring the usual ``tangency condition'' --- required locally pointwise in the standard viscosity definition --- only in mean. These two weakenings, on { the} one hand, keep safe the existence of solutions; on the other hand, simplify a lot the proof of uniqueness, as this does not require  anymore the passage through the Crandall-Ishii Lemma.

The main objective of this paper is to extend to our  infinite-dimensional
path-dependent context such new notion of viscosity solution.
Before illustrating our results, we recall that, for equation like (\ref{PPDE-intro}), when all coefficients are Markovian, results on existence and uniqueness of classical solutions (that can be found e.g.\ in \cite[Ch.\ 7]{DPZ02})
are much weaker than in the finite-dimensional case, due to the lack of local compactness and to the absence of a reference measure like the Lebesgue { measure}.
This makes quite relevant the notion of viscosity solution, introduced in the infinite-dimensional case by \cite{Lio1,Lio2,Lio3}, see also \cite{S94} and, for a survey, \cite[Ch.\,3]{FGS14}. The infinite-dimensional extension of the usual notion of viscosity solution to these PDEs is not trivial, as the comparison results are established only under 
non-standard
 continuity assumptions on the coefficients (needed to generate maxima and minima) and under a nuclearity condition on the diffusion coefficient $\sigma$. The latter  purely technical condition is a methodological bound of this notion of viscosity solutions{: it is  needed to} adapt the Crandall-Ishii Lemma to the infinite-dimensional context.

The core results of the present paper (contained in the main Section \ref{sec:ppde}) are the following.

First, similar to \cite{RTZ14}, we show that the infinite-dimensional definition has an equivalent version with semijets (Proposition \ref{prop:jets}).
Then, under natural assumptions on the operator $A$ and the coefficients $b,\sigma,F$, we prove a sub/supermartingale characterization of sub/supersolutions  (Theorem \ref{lemma:pp1}), which extends the corresponding result in \cite{RTZ14}.
This key theorem is the starting point of several important results, which
are listed here.
\begin{enumerate}[(a)]
\setlength\itemsep{0.05em}
\item\label{2016-10-17:02}
PPDE \eqref{PPDE-intro} satisfies a comparison principle in the class of continuous functions with polynomial growth on $\Lambda$ (Corollary \ref{C:CompPrinciple}).
In particular, since the Crandall-Ishii Lemma is not needed to establish comparison, we emphasize that, with respect to the standard viscosity solution theory in infinite dimension, the  aforementioned conditions (non-standard continuity on the coefficients and  nuclearity on $\sigma$) are completely by-passed in our framework, therefore not needed.
\item 
For equations of type \eqref{PPDE-intro},
 our notion of viscosity solution is equivalent to the notion of \emph{mild solution} (i.e., solution of suitable integral equations; see Subsection \ref{sub:mild-visc}).
\item\label{2016-09-12:00} Given a terminal condition
$u(T,\mathbf x)=\xi(\mathbf x)$, with $\xi$ belonging to the space of continuous functions with polynomial growth, existence  and uniqueness of viscosity solution holds
(Theorem \ref{teo:main}). { Such existence and uniqueness result is proved using the equivalence with mild solutions, i.e.\ employing fixed point arguments. It must be noted that uniqueness also follows from the comparison principle (point
 (\ref{2016-10-17:02}) above).}
  \item PPDE \eqref{PPDE-intro} satisfies the  stability property of viscosity solutions (Proposition \ref{propp:2014-10-boh}). 
\end{enumerate}
\vskip-5pt
An important consequence 
{ of  (\ref{2016-10-17:02}) }
 is that the passage from finite to infinite dimension 
highlights the relevance of the new notion
of viscosity solution \emph{even} in the Markovian (no path-dependent) case. Indeed, while in the finite-dimensional case the theory based on the usual definition of viscosity solutions is so well-developed to cover basically a huge class of PDEs, in  the infinite-dimensional case the known theory of viscosity solutions collides with the structural constraints described above{; the latter} can be by-passed with the new notion allowing to cover types of equations which could not be treated with the current theory of viscosity solution in infinite dimension.

As mentioned above,  Section \ref{S:Extension} is devoted to investigating extensions of the results to   semilinear or even fully nonlinear PPDEs.
More precisely, 
 we first introduce a path-dependent stochastic optimal control problem in infinite dimension (to this regard, we mention, in finite dimension, the recent paper \cite{TZ15}, dealing within the framework of Dupire or functional It\^o calculus, an account of which can be found, e.g., in \cite{CF10b, CF13, Dupire}); then, we write the corresponding path-dependent Hamilton-Jacobi-Bellman equation and provide a definition of viscosity solution coherent with that given for PPDE \eqref{PPDE-intro}.
{It must be noted that here we are not able to prove the analogous of the key Theorem \ref{lemma:pp1}, hence we have to change the methods to attack the well posedness problem.
What we do is the following.
We state, without proof, the dynamic programming principle for the value function $v$ of the stochastic control problem and we prove that $v$ is a viscosity solution of the corresponding path-dependent Hamilton-Jacobi-Bellman equation. We also prove a partial comparison principle for this equation, namely a comparison principle when 
either the sub- or the super-solution is smooth. Finally, concerning the comparison principle, we focus on the semilinear case (i.e., the diffusion coefficient is not controlled and the drift satisfies a structure condition). In this case, when $H=\R^n$, a proof of the comparison principle has been given in \cite{RTZ14} and it is based on the notion of punctual differentiability introduced in \cite{CaffCabre}. In Subsection \ref{SubS:Punctual} we describe (without reporting a real proof, which would go beyond the scopes of the present paper) the steps that are needed in order to prove the comparison principle adapting to our infinite-dimensional framework the proof of \cite{RTZ14}.

\bigskip

The paper is organized as follows.
In Section \ref{sec:notation} we present the notation used throughout the paper.
Section \ref{sec:sde} is devoted to recalling results on  existence, uniqueness, and stability of mild solutions of path-dependent SDEs in Hilbert spaces.
In 
 Section \ref{sec:ppde}, we introduce the notion of viscosity solution for path-dependent PDEs in Hilbert spaces, in terms
(which we prove to be equivalent) of both test functions and semijets (Subsection \ref{sec:2014-06-17:aa});
we prove the key martingale characterization of viscosity sub/supersolutions (Subsection \ref{sec:stability});
we prove the comparison principle (Subsection \ref{sec:comparison}); we prove the  equivalence with mild solutions (Subsection \ref{sub:mild-visc});
finally, we provide an existence and uniqueness result and a stability result for the path-dependent PDE (Subsection \ref{sec:exun}).
In Section \ref{sec:markov}, we consider the Markovian case, i.e., when all data depend only on the present, and we compare the notion of viscosity solution studied in Section \ref{sec:ppde} to the usual notions of viscosity solutions adopted in the literature for partial differential equations in Hilbert spaces.
In Section \ref{S:Extension} we study other PPDEs,
 of Hamilton-Jacobi-Bellman (HJB) type, which can be semilinear or even fully nonlinear, and
which are associated to a stochastic control problem in infinite dimensions with path-dependence. We begin formulating the stochastic control problem and writing the corresponding path-dependent HJB equation. We give the definition of viscosity solution of that equation and we prove that the value function is a viscosity solution of it (Subsection \ref{SubS:Existence}). We prove the partial comparison principle (Subsection \ref{SubS:PartialComp}). In the semilinear case, we describe a possible way of proving the comparison principle (Subsection \ref{SubS:Punctual}).

\section{Notation}
\label{sec:notation}

Consider a real separable Hilbert space $H$. Denote by $\langle\cdot,\cdot\rangle$ and $|\cdot|$ the scalar product and norm on $H$, respectively.
Let $T>0$ and consider the Banach space
$$\mathbb W\coloneqq C([0,T];H)$$ of continuous functions from $[0,T]$ to  $H$, whose generic element is denoted by $\mathbf{x}$ and whose norm is denoted by $|\cdot|_\infty$, i.e., $|\mathbf x|_\infty \defeq  \sup_{t\in[0,T]}|\mathbf x_t|$. Introduce the space
$$
\Lambda \defeq  [0,T]\times \mathbb W
$$
and the map $\textbf{d}_\infty\colon\Lambda\times\Lambda\rightarrow\R^+$ defined by (\footnote{We use the same symbol, $|\cdot|$, to denote both the \emph{norm} on $H$ and the \emph{absolute value} of a real number. No confusion should arise, as the meaning will be clear from the context.})
\[
\textbf{d}_\infty \big((t,\mathbf{x}),(t',\mathbf{x}')\big)  \defeq   |t-t'| + |\mathbf{x}_{\cdot \wedge t} - \mathbf{x}'_{\cdot \wedge t'}|_\infty.
\]
Then $\textbf{d}_\infty$ is a pseudometric on $\Lambda$. In particular, $(\Lambda,\textbf{d}_\infty)$ is a topological space with the topology induced by the pseudometric $\textbf{d}_\infty$.
$\Lambda$ becomes a measurable space when endowed with the Borel $\sigma$-algebra induced by $\textbf d_\infty$. Throughout the paper, the topology and $\sigma$-algebra on $\Lambda$ are those induced by $\textbf{d}_\infty$.

\begin{Definition}
Let $E$ be a Banach space. 
A \textbf{non-anticipative function} on $\Lambda$ taking values in $E$
is a  function $v\colon\Lambda\rightarrow E$ such that
$$
v(t,\mathbf x) = v(t,\mathbf x_{\cdot\wedge t})
\qquad \forall (t,\mathbf x)\in\Lambda.
$$
\end{Definition}

\begin{Definition}
\label{D:Consistent}
Let $(E,|\cdot|_E)$ be a Banach space.
\begin{enumerate}[(i)]
\setlength\itemsep{0.05em}
\item \vskip-5pt
 $C(\Lambda;E)$ is the space of continuous functions $v\colon\Lambda\rightarrow E$.
\item $C_p(\Lambda;E)$, $p\geq 0$,
is the space of continuous functions $v\colon\Lambda\rightarrow E$ 
such that 
$$
|v|_{C_p(\Lambda;E)}  \coloneqq  \sup_{(t,\mathbf{x})\in \Lambda }\frac{|v(t,\mathbf{x})|_E}{1+|\mathbf{x}|_\infty^p}<\infty.
$$
$C_p(\Lambda;E)$ is a Banach space when endowed with the norm $|\cdot|_{C_p(\Lambda;E)}$.
\item
$\cpol(\Lambda;E)$ is the set of $E$-valued continuous functions with polynomial growth on $\Lambda$:
$$
\cpol(\Lambda;E) \coloneqq
\bigcup_{p\geq 0}C_p(\Lambda;E).
$$
\item $UC(\Lambda;E)$ is the space of uniformly continuous  functions $v\colon\Lambda\rightarrow E$.
\end{enumerate}
When $E=\R$, we drop $\R$ and  simply write $C(\Lambda)$, $C_p(\Lambda)$, $\cpol(\Lambda)$, and $UC(\Lambda)$.
\end{Definition}

\begin{Remark}
\label{R:non-anticipative}
${}$
\begin{enumerate}[(i)]
\setlength\itemsep{0.05em}
\item \vskip-5pt For all $p\geq 1$,
it holds
 $UC(\Lambda;E)\subset C_1(\Lambda;E)\subset C_p(\Lambda;E)\subset \cpol(\Lambda;E)\subset C(\Lambda;E)$.
\item\label{2016-06-21:01}  A measurable map $v\colon\Lambda\rightarrow E$ is automatically non-anticipative. For this reason, we will drop the term non-anticipative when $v$ is measurable.
\end{enumerate}
\end{Remark}

Now let  $(\Omega,\mathcal{F},\mathbb{F}=(\mathcal{F}_t)_{t\geq 0}, \mathbb{P})$ be  a filtered probability space satisfying the usual conditions. We shall make use of the following
 classes of stochastic processes on this space.

\begin{Definition}
\label{D:Spaces}
Let $(E,|\cdot|_E)$ be a Banach space.
\begin{enumerate}[(i)]
\setlength\itemsep{0.05em}
\item \vskip-5pt
$L_{\cal{P}}^0(E)\defeq L_{\cal{P}}^0(\Omega\times[0,T];E)$  is the space 
 of $E$-valued predictable processes $X$, endowed with the topology induced by the convergence in measure.
\item $L_{\cal{P}}^p(E)\defeq L_{\cal{P}}^p(\Omega\times[0,T];E)$, $p\geq 1$, is the Banach space of $E$-valued predictable processes $X$ such that
$$
|X|^p_{L^p_{\cal{P}}(E)}  \defeq   \mathbb{E}\left[\int_0^T|X_t|_E^pdt\right] < \infty.
$$
\item $\mathcal{H}_\mathcal{P}^0(E)$ is the subspace of elements  $X\in L_{\cal{P}}^0(E)$ admitting a continuous version.
Given an element of $\mathcal{H}_\mathcal{P}^0(E)$ we shall always refer to its uniquely determined (up to a $\P$-null set) continuous version.
\item $\mathcal{H}_\mathcal{P}^p(E)$, $p\geq 1$, is the subspace of elements  $X\in L_{\cal{P}}^p(E)$ admitting a continuous version and such that
$$
|X|_{\mathcal{H}_\mathcal{P}^p(E)}^p  \defeq   \mathbb{E}\bigg[\sup_{t\in[0,T]}|X_t|^p_E \bigg] < \infty.
$$
$\mathcal{H}_\mathcal{P}^p(E)$, when endowed with the norm $|\cdot|_{\mathcal{H}_\mathcal{P}^p(E)}$ defined above, is a  Banach space.
\end{enumerate}
When $E=\R$, we drop $\R$ and  simply write $L_{\cal{P}}^0, \ L_{\cal{P}}^p, \ \mathcal{H}_\mathcal{P}^0$, and $\mathcal{H}_\mathcal{P}^p$.
\end{Definition}

\begin{Remark}\label{rem:setting}
In the present paper, as it is usually done in the literature on infinite-dimensional second order PDEs (see, e.g., \cite{DPZ92, S94}),  we distinguish between the probability space $(\Omega,\Fc,\P)$, whose generic element is $\omega$, and the path space $\mathbb W$, whose generic element is $\mathbf{x}$.
Instead, in \cite{EkrenKellerTouziZhang},  the authors identify these two spaces (up to the translation of the initial point), taking as probability space  the canonical space $\{\mathbf x\in \mathbb W\colon \mathbf x_0=0\}$ and calling $\omega$ its generic element. For equations \eqref{PPDE-intro} (treated up to Section \ref{sec:markov} included), { our setting} can be rephrased in the setting of \cite{EkrenKellerTouziZhang} by taking as probability space $(\mathbb W, \mathcal{B}(\mathbb W), \mathbb{P}^X)$, where  $\mathcal{B}(\mathbb W)$ is the $\sigma$-algebra of Borel subsets of $\mathbb W$ and $\P^X$ is the law of the process $X$ that we shall define in the next section as mild solution of a path-dependent SDE.
{ For the equations treated in   
 Section \ref{S:Extension}, our setting can be rephrased in the setting of \cite{EkrenKellerTouziZhang} by  considering the family of probability measure $\P^{X,\mathbf{a}}$, the laws of the controlled process $X$ when $\mathbf{a}$ ranges over the set of control processes.}
\end{Remark}

\section{Preliminaries on path-dependent SDEs in Hilbert spaces}
\label{sec:sde}

In this section we introduce a path-dependent SDE in Hilbert space whose mild solution will provide our reference process for the definition of viscosity solution.
 As general references for stochastic integration and SDEs in infinite-dimensional spaces, we refer to the monographies \cite{DPZ92, GM10}.

Let $K$ be a real separable Hilbert space with inner product $\langle\cdot,\cdot\rangle_K$ and  let $W=(W_t)_{t\geq 0}$ be a $K$-valued cylindrical Wiener process on the filtered probability space $(\Omega,\mathcal{F},\mathbb{F}=(\mathcal{F}_t)_{t\geq 0}, \mathbb{P})$. We consider, for  $t\in[0,T]$ and $Z\in\mathcal{H}_\mathcal{P}^0(H)$, the following \emph{path-dependent} SDE:
\begin{equation}
\label{SHDE}
\begin{cases}
dX_s= AX_s ds + b(s,X)ds + \sigma(s,X)dW_s, \qquad &s\in[t,T], \\
X_{\cdot\wedge t} = Z_{\cdot\wedge t}.
\end{cases}
\end{equation}
The precise notion of solution is given below. First, we introduce some notations and then impose Assumption \ref{A:SHDE} on $A$, $b$, $\sigma$. We recall that
 $L(K;H)$ denotes the Banach space of bounded linear operators from $K$ to  $H$, endowed with the operator norm. We  denote by $L_2(K;H)$ the Hilbert space of Hilbert-Schmidt operators from $K$ to $H$, whose scalar product and norm are, respectively,
\[
\langle P,Q\rangle_{L_2(K;H)}  \coloneqq   \sum_{k=1}^\infty \langle Pe_k,Qe_k\rangle, \qquad |P|_{L_2(K;H)}   \coloneqq   \bigg(\sum_{k=1}^\infty |Pe_k|^2\bigg)^{1/2},
\]
for all $P,Q\in L_2(K;H)$, where $\{e_k\}_k$ is a complete orthonormal basis of $K$.

\begin{Assumption}
\label{A:SHDE}
\quad
\begin{enumerate}[(i)]
\setlength\itemsep{0.05em}
\item 
\label{2016-07-13B:00}
\vskip-5pt The operator $A\colon\mathcal{D}(A)\subset H\to H$ is the generator of a strongly continuous semigroup $\{e^{tA},\ t\geq 0\}$ in the Hilbert space $H$.
\item\label{2016-10-03:00} $b\colon\Lambda \rightarrow H$ is measurable and such that, for some constant $M>0$,
\[
|b(t,\mathbf{x}) - b(t,\mathbf{x}')|  \leq  M|\mathbf{x}-\mathbf{x}'|_{\infty}, \qquad |b(t,\mathbf x)|  \leq  M(1 + |\mathbf x|_\infty),
\]
for all $\mathbf x,\mathbf x'\in\mathbb W$, $t\in[0,T]$.
\item\label{2016-06-21:00} $\sigma\colon\Lambda\rightarrow L(K;H)$  is such that $\sigma(\cdot,\cdot)v\colon\Lambda \rightarrow H$ is measurable for each $v\in K$ and  $e^{sA}\sigma(t,\mathbf x)\in L_2(K;H)$ for every $s>0$ and every  $(t,\mathbf{x})\in\Lambda.$  Moreover, there exist $\hat M>0$ and $\gamma\in[0,1/2)$ such that, for all $\mathbf{x},\mathbf{x}'\in \mathbb W$, $t\in[0,T]$, $s\in(0,T]$,
\begin{align}\label{ICS}
|e^{sA}\sigma(t,\mathbf x)|_{L_2(K;H)}  &\leq  \hat M s^{-\gamma}(1 + |\mathbf{x}|_{\infty}), \\[5pt]
|e^{sA}\sigma(t,\mathbf x) - e^{sA}\sigma(t,\mathbf x')|_{L_2(K;H)}  &\leq  \hat M s^{-\gamma}|\mathbf{x}- \mathbf{x}'|_{\infty}.\label{ICS2}
\end{align}
\end{enumerate}
\end{Assumption}
\begin{Remark}
 Regarding  Assumption \ref{A:SHDE}(\ref{2016-06-21:00}), we observe that one could do the more demanding assumption of sublinear growth and Lipschitz continuity of $\sigma(t,\cdot)$ as function valued in the space $L_2(K;H)$ (see \cite{GM10}).
The assumption we give, which is the minimal one used in literature to give sense to the stochastic integral and to ensure the continuity of the stochastic convolution, is taken from \cite[Hypothesis\ 7.2]{DPZ92} and \cite{fuhrmantess02}.
 Regarding Assumption \ref{A:SHDE}(\ref{2016-10-03:00}),  we observe that it could be relaxed giving assumptions on the composition of the map $b$ with the semigroup, as done for $\sigma$ in part (\ref{2016-06-21:00}) of the same Assumption. Here, we follow \cite{DPZ92,fuhrmantess02} and we do not perform it.
\end{Remark}

Before giving the precise notion of solution to \eqref{SHDE} we make some observations.
\begin{enumerate}[(O1)]
\setlength\itemsep{0.05em}
\item 
\label{2016-08-09:00}
\vskip-5pt For $p=0$ and $p\geq 1$, we have the isometric embedding (\footnote{In the case $p=0$, the spaces $\mathcal{H}_\mathcal{P}^0$ and $L^0_\mathcal{P}$ are endowed with the metrics associated to the convergence in measure (see \cite[Ch.\ 1, Sec.\ 5]{M95}).})
$$
\mathcal{H}^p_{\mathcal{P}}(H) \hookrightarrow  L^p(\Omega,\mathcal{F},\P;\mathbb W).
$$
Hence a process in $\mathcal{H}^p_{\mathcal{P}}(H)$, $p=0$ or $p\geq 1$, can be seen (and we shall adopt this point of view in many points throughout the paper) as a $\mathbb W$-valued random variable.

\item\label{2016-08-09:01} If $X\in\mathcal{H}^p_{\mathcal{P}}(H)$, $p=0$ or $p\geq 1$, then $X_{\cdot\wedge t} \in L^p(\Omega, \mathcal{F}_t,\P;\mathbb W)$.
\item\label{2016-08-09:02} 
The topology on $\Lambda$ induced by the pseudometric $\mathbf{d}_\infty$ is weaker than the topology on $\Lambda\subset \mathbb{R}\times \mathbb{W}$ induced by  the norm $|\cdot|+|\cdot|_\infty$.
\item\label{2016-08-09:03} Given $v\in C(\Lambda;H)$ and $X\in \mathcal{H}^0_{\mathcal{P}}(H)$, due to
 (O\ref{2016-08-09:00})--(O\ref{2016-08-09:02})) above, the composition $v(\cdot,X)$  belongs to $ \mathcal{H}^0_\mathcal{P}(H)$.
\item\label{2016-08-09:04} Given $v\in C_{q}(\Lambda;H)$ and $X\in \mathcal{H}^p_{\mathcal{P}}(H)$, with {$q>0$, $1 \leq p<\infty$, $p\geq q$},  due to 
 (O\ref{2016-08-09:00})--(O\ref{2016-08-09:02})) above,  the composition $v(\cdot,X)$ is a process in the class $\mathcal{H}^{p/q}_{\mathcal{P}}(H)$.
In particular, if $v\in \cpol(\Lambda;H)$, then
$$
X\in \bigcap_{p\geq 1}\mathcal{H}^p_\mathcal{P}(H)\
\Rightarrow\
v(\cdot,X)\in
\bigcap_{p\geq 1}\mathcal{H}^p_\mathcal{P}(H).
$$
\end{enumerate}

\begin{Definition}\label{def:2014-11-17:aa}
Let  $t\in[0,T]$, $Z\in \mathcal{H}_\mathcal{P}^0(H)$. We call \textbf{mild solution} of \eqref{SHDE} a process $X\in\mathcal{H}^0_\mathcal{P}(H)$ such that $X_{\cdot\wedge t}=Z_{\cdot\wedge t}$ and
\begin{equation}
\label{mild}
X_s  =  e^{(s-t)A}Z_t + \int_t^s e^{(s-r)A}b(r,X)dr + \int_t^s e^{(s-r)A} \sigma(r,X) dW_r, \quad \forall\,s\in [t,T].
\end{equation}
\end{Definition}

Notice that  condition \eqref{ICS} implies
$$
\int_t^s |e^{(s-r)A} \sigma(r,\mathbf{x})|^2_{L_2(K;H)} dr \leq  C_0(1+|\mathbf{x}|^2_\infty), \quad \forall\, t\in[0,T],\ s\in[t,T],\ \mathbf{x}\in\mathbb W,
$$
which ensures that the stochastic integral in Definition \ref{def:2014-11-17:aa} makes sense for every process  $X\in \mathcal{H}^0_\mathcal{P}(H)$.

\smallskip
We are going to state an existence and uniqueness result. To this end, we define
$$ p^*\coloneqq  \frac{2}{1-2\gamma}.
$$

\begin{Theorem}
\label{Exist}
Let Assumption \ref{A:SHDE} hold. Then, for every $p>p^*$, $t\in[0,T]$ and   $Z\in \mathcal{H}_\mathcal{P}^p(H)$, there exists a unique mild solution $X^{t,Z}$ to \eqref{SHDE}. Moreover,    $X^{t,Z}\in \mathcal{H}_\mathcal{P}^p(H)$ and
\begin{equation}
\label{E:EstimateSDE}
|X^{t,Z}|_{\mathcal{H}^p_\mathcal{P}(H)}  \leq   K_0(1 + |Z|_{\mathcal{H}^p_\mathcal{P}(H)}), \qquad \forall \, (t,Z)\in[0,T]\times \mathcal{H}^p_\mathcal{P}(H).
\end{equation}
Finally,
 the map
\begin{equation}\label{eq:2014-10-29:aa}
 [0,T]\times\mathcal{H}_\mathcal{P}^p(H)\to\mathcal{H}_\mathcal{P}^p(H), \qquad (t,Z)\mapsto X^{t,Z}
\end{equation}
is
Lipschitz continuous with respect to $Z$,  uniformly in $t\in[0,T]$, and  jointly continuous.
\end{Theorem}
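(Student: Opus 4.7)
The plan is to set up a Banach fixed-point argument in the space $\mathcal{H}^p_{\mathcal{P}}(H)$. Fix $t\in[0,T]$ and $Z\in\mathcal{H}^p_{\mathcal{P}}(H)$, and consider the closed affine subspace $\mathcal{E}_{t,Z}$ of those $X\in\mathcal{H}^p_{\mathcal{P}}(H)$ with $X_{\cdot\wedge t}=Z_{\cdot\wedge t}$. Define $\Phi\colon\mathcal{E}_{t,Z}\to\mathcal{E}_{t,Z}$ by
\[
\Phi(X)_s \defeq e^{(s-t)A}Z_t+\int_t^s e^{(s-r)A}b(r,X)\,dr+\int_t^s e^{(s-r)A}\sigma(r,X)\,dW_r,\qquad s\in[t,T],
\]
and $\Phi(X)_{\cdot\wedge t}=Z_{\cdot\wedge t}$. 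Fixed points of $\Phi$ are exactly the mild solutions of \eqref{SHDE}.

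The first task is to show $\Phi$ maps $\mathcal{E}_{t,Z}$ into itself. The semigroup term is controlled by the exponential bound on $\|e^{sA}\|$ on $[0,T]$; the Bochner integral term is handled using Assumption \ref{A:SHDE}(\ref{2016-10-03:00}) and Minkowski's inequality; the delicate term is the stochastic convolution. Here I would apply the factorization method of Da Prato--Kwapie\'n--Zabczyk: pick $\alpha\in(\gamma,\tfrac12)$ with $\alpha p>1$, which is possible exactly when $p>p^*=2/(1-2\gamma)$, and write
\[
\int_t^s e^{(s-r)A}\sigma(r,X)\,dW_r=\frac{\sin(\pi\alpha)}{\pi}\int_t^s(s-r)^{\alpha-1}e^{(s-r)A}Y_r^\alpha\,dr,
\]
with $Y^\alpha_r\defeq\int_t^r(r-q)^{-\alpha}e^{(r-q)A}\sigma(q,X)\,dW_q$. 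Using \eqref{ICS} one estimates $\mathbb{E}[|Y^\alpha_r|^p]$ via the Burkholder inequality, and then the deterministic convolution by H\"older gives a pathwise supremum bound in $L^p(\Omega)$. Combined with the linear growth of $b$ and $\sigma$, this yields $\Phi(X)\in\mathcal{H}^p_{\mathcal{P}}(H)$ together with the a-priori bound \eqref{E:EstimateSDE} once the fixed point is found.

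For the contraction property, I would either introduce the equivalent norm
\[
\|X\|_\lambda\defeq\bigl(\mathbb{E}\bigl[\sup_{s\in[t,T]}e^{-\lambda p s}|X_s|^p\bigr]\bigr)^{1/p}
\]
and choose $\lambda$ large using \eqref{ICS2} and the Lipschitz property of $b$, or equivalently work on a short interval $[t,t+\delta]$ with $\delta$ depending only on $M,\hat M,\alpha,\gamma,p$ and then iterate finitely many times to reach $T$. The combination with \eqref{ICS2} ensures that the Lipschitz constant of $\Phi$ on a short interval behaves like $C\delta^{(1-2\gamma)/2-1/p}$, which is strictly less than $1$ for small $\delta$; Banach's theorem then gives existence and uniqueness. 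Restricting the same contraction estimate to the difference $\Phi^{(Z_1)}(X_1)-\Phi^{(Z_2)}(X_2)$ delivers the Lipschitz continuity of $Z\mapsto X^{t,Z}$ uniformly in $t$.

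The main obstacle is the joint continuity of $(t,Z)\mapsto X^{t,Z}$, because the initial time $t$ itself varies. I would reduce to the case $Z_n\to Z$ with $t_n\to t$ by first using the already established Lipschitz continuity in $Z$ (uniformly in $t$) to replace $Z_n$ with a single good approximation, and then analyze the dependence on $t$ directly: write the difference $X^{t_n,Z}-X^{t,Z}$ using \eqref{mild}, split the integrals into pieces on $[\min(t_n,t),\max(t_n,t)]$ and on $[\max(t_n,t),T]$, and control each piece by strong continuity of the semigroup, continuity of $b$, $\sigma$ in the $\mathbf{d}_\infty$-topology, the factorization estimate applied uniformly in the initial time, and dominated convergence justified by \eqref{E:EstimateSDE}. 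Handling the stochastic convolution under the time shift (in particular the singular kernel $(s-r)^{-\gamma}$ composed with a moving lower endpoint) is the most delicate point, and is the reason the uniform-in-$t$ a-priori estimate \eqref{E:EstimateSDE} is needed before attempting continuity in $t$.
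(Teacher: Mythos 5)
Your proposal is a genuine proof sketch, whereas the paper's ``proof'' of Theorem~\ref{Exist} is a one-paragraph citation: existence/uniqueness and the Lipschitz-in-$Z$ bound are delegated to \cite[Th.~3.6]{Roses2016a}, the joint continuity in $(t,Z)$ to \cite[Th.~3.14]{Roses2016a}, with a pointer to \cite[Prop.~3.2]{fuhrmantess02} for a sketch of the existence part. What you wrote out — Banach fixed point on $\mathcal{H}^p_\mathcal{P}(H)$, factorization to control the stochastic convolution in the supremum norm, contraction either by an exponentially weighted equivalent norm or by iterating short intervals, then a separate argument splitting integrals to handle the moving initial time — is precisely the content of those references, so in spirit the two ``proofs'' agree; you just supplied the argument the paper outsources. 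The trade-off is obvious: the paper stays short but self-opaque, your version is self-contained.

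There is, however, a concrete error in your parameter choice for the factorization, and it is exactly the place where the hypothesis $p>p^*$ must enter. You pick $\alpha\in(\gamma,\tfrac12)$ with $\alpha p>1$ and claim this is ``possible exactly when $p>p^*$''. Neither half is right. For
\[
Y^\alpha_r=\int_t^r(r-q)^{-\alpha}e^{(r-q)A}\sigma(q,X)\,dW_q
\]
to be well-defined (and to have a finite $p$-th moment via Burkholder), its quadratic variation must be integrable; by \eqref{ICS} it is bounded by $\hat M^2(1+|X|_\infty)^2\int_t^r(r-q)^{-2\alpha-2\gamma}\,dq$, which is finite iff $\alpha+\gamma<\tfrac12$. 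So the upper constraint must be $\alpha<\tfrac12-\gamma$, not $\alpha<\tfrac12$; with your bound and, say, $\gamma\geq\tfrac14$, no admissible $\alpha$ would make $Y^\alpha$ well-defined even though your interval $(\gamma,\tfrac12)$ is non-empty. The lower constraint $\alpha>\gamma$ plays no role (it is $\alpha>1/p$ that matters, coming from H\"older on the deterministic convolution $\int_t^s(s-r)^{\alpha-1}e^{(s-r)A}Y^\alpha_r\,dr$), and is in fact incompatible with the correct upper bound once $\gamma\geq\tfrac14$. Finally, the ``exactly when'' is false as stated: the interval $\bigl(\max(\gamma,1/p),\tfrac12\bigr)$ is non-empty whenever $p>2$, which is strictly weaker than $p>p^*$ for any $\gamma>0$. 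The correct window is $1/p<\alpha<\tfrac12-\gamma$, non-empty if and only if $p>2/(1-2\gamma)=p^*$. Once that is fixed, the rest of your scheme (contraction estimate with exponent $\tfrac12-\gamma-\tfrac1p$, Lipschitz dependence on $Z$, and the dominated-convergence argument for continuity in $t$) goes through and matches the cited proofs.
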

\begin{Remark}
Since for $p^*<p<q$ we have $\mathcal{H}_\mathcal{P}^p(H)\supset \mathcal{H}_\mathcal{P}^q(H)$, if $Z\in \mathcal{H}_\mathcal{P}^q(H)$, then the associated mild solution $X^{t,Z}$ is also a solution in $\mathcal{H}_\mathcal{P}^p(H)$ and, by uniqueness, it is \emph{the} solution in that space. Hence, the solution does not depend on the specific $p>p^*$ chosen.
\end{Remark}
\begin{proof}[Proof of Theorem \ref{Exist}.]
The theorem is a particular
case of \cite[Th.\ 3.6]{Roses2016a},
for the existence/uniqueness part and for the Lipschitz
continuity with respect to $Z$,
and of 
\cite[Th.\ 3.14]{Roses2016a}, for the joint continuity in $t,Z$.
For a sketch of proof of the existence/uniqueness part, the reader can also refer to \cite[Prop.\ 3.2]{fuhrmantess02}.
\end{proof}

We notice that uniqueness of mild solutions and the semigroup property of $\{e^{sA}\}_{s\geq 0}$ yield the flow property for the solution with initial data $(t,\mathbf{x})\in\Lambda$:
\begin{equation}\label{flow}
X^{t,\mathbf{x}}= X^{s,X^{t,\mathbf x}}\ \mbox{in}\ \mathcal{H}^p_\mathcal{P}(H), \  \forall\, (t,\mathbf{x})\in \Lambda, \ \forall\,s\in[t,T].
\end{equation}
In the sequel, we shall use  the following generalized dominated convergence result.
\begin{Lemma}\label{GD}
\emph{Let $(\Sigma, \mu)$ be a measure space. Assume that  $f_n, g_n, f, g\in L^1(\Sigma, \mu;\R)$, $f_n\to f$ and $g_n\to g$  $\mu$-a.e., $|f_n|\leq g_n$ and $\int_{\Sigma} g_nd\mu\to \int_{\Sigma} g d\mu$. Then $\int_{\Sigma} f_nd\mu\to \int_{\Sigma} fd\mu$.}
\end{Lemma}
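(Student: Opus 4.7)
The plan is to reduce this to two applications of the classical Fatou lemma, exploiting the fact that the hypothesis $|f_n|\le g_n$ gives two non-negative sequences $g_n\pm f_n$ to which Fatou applies directly.

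First I would observe that $g_n+f_n\ge 0$ and $g_n-f_n\ge 0$ $\mu$-a.e., and that both sequences converge $\mu$-a.e.\ to $g+f$ and $g-f$ respectively. Applying Fatou's lemma to $g_n+f_n$ yields
\[
\int_\Sigma (g+f)\,d\mu \;\le\; \liminf_{n\to\infty}\int_\Sigma (g_n+f_n)\,d\mu \;=\; \lim_{n\to\infty}\int_\Sigma g_n\,d\mu + \liminf_{n\to\infty}\int_\Sigma f_n\,d\mu,
\]
where in the last equality I use the assumption $\int g_n\,d\mu\to\int g\,d\mu$ together with the elementary fact that if one of two sequences converges, the $\liminf$ of the sum equals the sum of the limit and the $\liminf$. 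Since $g\in L^1$, the term $\int g\,d\mu$ can be cancelled on both sides, giving $\int f\,d\mu\le\liminf_n \int f_n\,d\mu$.

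Next I would apply Fatou to $g_n-f_n$ in the same way to obtain
\[
\int_\Sigma (g-f)\,d\mu \;\le\; \liminf_{n\to\infty}\int_\Sigma (g_n-f_n)\,d\mu \;=\; \lim_{n\to\infty}\int_\Sigma g_n\,d\mu - \limsup_{n\to\infty}\int_\Sigma f_n\,d\mu,
\]
which after cancelling $\int g\,d\mu$ and multiplying by $-1$ yields $\limsup_n\int f_n\,d\mu\le\int f\,d\mu$. Chaining the two inequalities gives
\[
\int_\Sigma f\,d\mu \;\le\; \liminf_{n\to\infty}\int_\Sigma f_n\,d\mu \;\le\; \limsup_{n\to\infty}\int_\Sigma f_n\,d\mu \;\le\; \int_\Sigma f\,d\mu,
\]
so that the limit of $\int f_n\,d\mu$ exists and coincides with $\int f\,d\mu$.

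There is no real obstacle here; the only minor point requiring care is the legitimacy of splitting $\liminf$ of a sum into a limit plus a $\liminf$, which is valid precisely because $\int g_n\,d\mu$ converges (and all quantities are finite, since $f_n,g_n\in L^1$). The hypothesis $g_n\to g$ $\mu$-a.e.\ combined with $\int g_n\to\int g$ is the standard replacement for the uniform domination $|f_n|\le g$ of the classical Lebesgue theorem, and it is exactly what makes Pratt's version of dominated convergence work.
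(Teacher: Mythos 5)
Your proof is correct and is the standard Fatou-based argument for Pratt's generalized dominated convergence theorem. The paper itself states Lemma \ref{GD} without proof, treating it as a known auxiliary result, so there is no paper proof to compare against; your argument (applying Fatou to the nonnegative sequences $g_n+f_n$ and $g_n-f_n$, then cancelling the finite quantity $\int g\,d\mu$) is precisely the classical one.
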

\begin{Corollary}\label{corr:2014-10-29:aa}

  Let $p'\geq 1$, $\Psi\in L^{\infty}(\Omega, \mathcal{F},\P;C_{p'}(\Lambda))$ and $p>p^*$, $p\geq p'$.
 Then the map
\begin{equation}\label{eq:2014-11-26:ab}
[0,T]\times [0,T]\times \mathcal{H}_\mathcal{P}^p(H)\to \mathbb{R}, \  (s,t,Z)\mapsto \mathbb{E}\left[\Psi(\cdot)(s,X^{t,Z})\right]
\end{equation}
is well-defined and continuous.
\end{Corollary}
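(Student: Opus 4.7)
The plan is to verify well-definedness first and then reduce continuity to an application of the generalized dominated convergence lemma~\ref{GD}, using the continuity result from Theorem~\ref{Exist}.

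For well-definedness: since $\Psi \in L^\infty(\Omega,\mathcal F,\P;C_{p'}(\Lambda))$ there is a constant $M>0$ with $|\Psi(\omega)|_{C_{p'}(\Lambda)} \leq M$ for $\P$-a.e.\ $\omega$. By the polynomial growth bound defining $C_{p'}(\Lambda)$,
\[
|\Psi(\omega)(s,X^{t,Z}(\omega))| \leq M\bigl(1 + |X^{t,Z}(\omega)|_\infty^{p'}\bigr).
\]
Measurability of $\omega\mapsto \Psi(\omega)(s,X^{t,Z}(\omega))$ follows from the fact that the evaluation map $(\psi,(r,\mathbf y))\mapsto \psi(r,\mathbf y)$ is continuous on $C_{p'}(\Lambda)\times \Lambda$ (by observation~(O\ref{2016-08-09:02})), together with the measurability of $\omega\mapsto(\Psi(\omega),(s,X^{t,Z}(\omega)))$. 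Theorem~\ref{Exist} gives $X^{t,Z}\in\mathcal H^p_\mathcal P(H)$ with $p\geq p'$, so the bound above is in $L^1(\Omega,\P)$, and the expectation is finite.

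For continuity, fix $(s,t,Z)\in[0,T]^2\times\mathcal H^p_\mathcal P(H)$ and a sequence $(s_n,t_n,Z_n)\to (s,t,Z)$. By the joint continuity of $(t,Z)\mapsto X^{t,Z}$ from Theorem~\ref{Exist}, we have $X^{t_n,Z_n}\to X^{t,Z}$ in $\mathcal H^p_\mathcal P(H)$, i.e.\ $|X^{t_n,Z_n}-X^{t,Z}|_\infty\to 0$ in $L^p(\Omega,\P)$. Extracting a subsequence (still denoted with $n$), the convergence holds $\P$-a.s. Fix such an $\omega$ outside the null set where also $|\Psi(\omega)|_{C_{p'}(\Lambda)}\leq M$. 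Then $\mathbf d_\infty\bigl((s_n,X^{t_n,Z_n}(\omega)),(s,X^{t,Z}(\omega))\bigr)\to 0$, and continuity of $\Psi(\omega)\in C_{p'}(\Lambda)$ yields
\[
\Psi(\omega)(s_n,X^{t_n,Z_n}(\omega)) \;\longrightarrow\; \Psi(\omega)(s,X^{t,Z}(\omega)).
\]

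To conclude with Lemma~\ref{GD}, set $f_n(\omega):=\Psi(\omega)(s_n,X^{t_n,Z_n}(\omega))$, $g_n(\omega):=M(1+|X^{t_n,Z_n}(\omega)|_\infty^{p'})$, and similarly $f,g$ with the limits. We have $|f_n|\leq g_n$, $f_n\to f$ and $g_n\to g$ $\P$-a.s. Moreover, since $|X^{t_n,Z_n}|_\infty \to |X^{t,Z}|_\infty$ in $L^p(\Omega,\P)$ and $p\geq p'\geq 1$, the same convergence holds in $L^{p'}(\Omega,\P)$, so the $L^1$-norms of the $p'$-th powers converge, giving $\int g_n\,d\P\to\int g\,d\P$. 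Lemma~\ref{GD} then yields $\mathbb E[f_n]\to\mathbb E[f]$ along the subsequence. Applying the same argument to an arbitrary subsequence, every subsequence of $\mathbb E[\Psi(\cdot)(s_n,X^{t_n,Z_n})]$ has a sub-subsequence converging to $\mathbb E[\Psi(\cdot)(s,X^{t,Z})]$, hence the full sequence converges, proving joint continuity.

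The main technical point is the control of the random dominating sequence $g_n$: one cannot use plain dominated convergence because the $X^{t_n,Z_n}$-dependent bound varies with $n$. This is precisely why the statement invokes the generalized dominated convergence Lemma~\ref{GD}, and why the hypothesis $p\geq p'$ is crucial, together with the $\mathcal H^p_\mathcal P(H)$-continuity of $(t,Z)\mapsto X^{t,Z}$ proved in Theorem~\ref{Exist}.
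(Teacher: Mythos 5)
Your proof is correct and follows essentially the same route as the paper's: both reduce continuity, via the joint continuity in Theorem~\ref{Exist}, to the convergence of $\E[\Psi(\cdot)(s_n,\cdot)]$ along a sequence converging in $\mathcal H^p_\mathcal P(H)$, pass to an a.s.\ convergent subsequence, apply Lemma~\ref{GD} with the dominating sequence $g_n=M(1+|X^{(n)}|_\infty^{p'})$, and finish by the subsequence-of-subsequence argument. The paper compresses all of this into a few lines; you spell out the dominating functions, the $L^{p'}$-convergence that gives $\int g_n\,d\P\to\int g\,d\P$, and the measurability of $\omega\mapsto\Psi(\omega)(s,X^{t,Z}(\omega))$ — details the paper leaves implicit but does not do differently.
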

\begin{proof}
In view of Theorem \ref{Exist},  the map  \eqref{eq:2014-11-26:ab} is well-defined.
Concerning continuity, again in view of Theorem \ref{Exist},
 it suffices to show that the map
$$
[0,T]\times\mathcal{H}^p_\mathcal{P}(H)\to \mathbb{R}, \
 (s,Y)\mapsto \mathbb{E}[\Psi(\cdot)(s,Y)]
$$
is continuous.
Let $\{Y^{(n)}\}_{n}$ be a sequence converging to $Y$ in
$ \mathcal{H}_\mathcal{P}^p(H)$, and $s_n\to s$ in $[0,T]$.
Let $\{Y^{(n_k)}\}_k$ be  a subsequence such that
$
| Y-Y^{(n_k)}|_\infty\to 0
$\
$\mathbb{P}$-a.s.. Then, using the continuity of $\Psi(\omega)(\cdot,\cdot)$  we get, by applying Lemma \ref{GD}, the convergence
$
\mathbb{E}[\Psi(\cdot)(s_{n_k},Y^{(n_k)})]\to \mathbb{E}[\Psi(\cdot)(s,Y)].
$
Since the original converging sequence $\{(s_n,Y^{(n)})\}_n$ was arbitrary, we get the claim.
\end{proof}

The following stability result for SDE \eqref{SHDE} 
 will be used to prove the stability of viscosity solutions in the next section.

\begin{Proposition}\label{prop:stabSDE}
Let Assumption \ref{A:SHDE} hold and assume that it holds also, for each $n\in\N$, for analogous objects $A_n$, $b_n$ and $\sigma_n$, such that the estimates of parts 
(\ref{2016-10-03:00})--(\ref{2016-06-21:00})
 in Assumption~\ref{A:SHDE} hold with the constants $M,\hat M, \gamma$.
Assume that the following convergences hold for every $(t,\mathbf{x})\in \Lambda$ and every $s\in[0,T]$:
\begin{enumerate}[(i)]
\setlength\itemsep{0.05em}
\item \vskip-5pt
  $e^{sA_n}\mathbf{x}_s\to e^{sA}\mathbf{x}_s$ in $H$;
\item  $e^{s A_{n}} b_n(t,\mathbf{x})\to e^{s A} b(t,\mathbf{x})$ in $H$;
 \item $e^{s A_{n}} \sigma_n(t,\mathbf{x})\to e^{s A} \sigma(t,\mathbf{x})$ in $L_2(K;H)$.
\end{enumerate}
Let $t\in [0,T]$, $Z\in \mathcal{H}_\mathcal{P}^p(H)$, 
for some $p> p^*$, 
and let $X^{(n),t,Z}$ be the mild solution to \eqref{SHDE}, where $A,b,\sigma$ are replaced by $A_n, b_n,\sigma_n$.
Then  $X^{(n), t,Z}\stackrel{n\rightarrow \infty}{\longrightarrow} X^{t,Z}$ in  $ \mathcal{H}_\mathcal{P}^p(H)$ and, for fixed $t$, there exists $K_0$ such that
\begin{equation}
\label{E:EstimateSDE-n}
|X^{(n),t,Z}|_{\mathcal{H}^p_\mathcal{P}(H)} \leq   K_0(1 + |Z|_{\mathcal{H}^p_\mathcal{P}(H)}), \qquad \forall\, Z\in \mathcal{H}^p_\mathcal{P}(H),\ \forall\, n\in \mathbb{N}.
\end{equation}
\end{Proposition}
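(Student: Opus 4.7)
The plan is to decouple the uniform estimate from the convergence and, for the latter, to exploit the Banach fixed point structure behind Theorem~\ref{Exist}, reducing matters to a term-by-term dominated convergence in the mild formulation.

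For the uniform bound \eqref{E:EstimateSDE-n}, observe that the approximating coefficients $(A_n,b_n,\sigma_n)$ satisfy Assumption~\ref{A:SHDE} with constants $M,\hat M,\gamma$ independent of $n$. A look at the proof of Theorem~\ref{Exist} shows that the constant $K_0$ in \eqref{E:EstimateSDE} depends only on $T,p,M,\hat M,\gamma$; applying that theorem to each $n$ therefore produces \eqref{E:EstimateSDE-n} with a common $K_0$.

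For the convergence, introduce the mild-solution operators $\Phi_n,\Phi\colon\mathcal{H}^p_\mathcal{P}(H)\to\mathcal{H}^p_\mathcal{P}(H)$,
\[
\Phi_n(Y)_s \coloneqq e^{(s-t)A_n}Z_t + \int_t^s e^{(s-r)A_n}b_n(r,Y)dr + \int_t^s e^{(s-r)A_n}\sigma_n(r,Y)dW_r, \qquad s\in[t,T],
\]
with $\Phi$ defined analogously, so that $X^{(n),t,Z}$ and $X^{t,Z}$ are the respective unique fixed points. As in the proof of Theorem~\ref{Exist}, on a suitable exponentially weighted equivalent norm $\|\cdot\|_\beta$ on $\mathcal{H}^p_\mathcal{P}(H)$ (for $\beta$ large enough), each $\Phi_n$ is a strict contraction whose rate $\kappa<1$, thanks to the $n$-uniformity of the constants, does not depend on $n$. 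Writing
\[
X^{(n),t,Z}-X^{t,Z} = \bigl[\Phi_n(X^{(n),t,Z})-\Phi_n(X^{t,Z})\bigr] + \bigl[\Phi_n(X^{t,Z})-\Phi(X^{t,Z})\bigr]
\]
and using the contraction property yields $\|X^{(n),t,Z}-X^{t,Z}\|_\beta\leq (1-\kappa)^{-1}\|\Phi_n(X^{t,Z})-\Phi(X^{t,Z})\|_\beta$, so it suffices to show $\Phi_n(X^{t,Z})\to\Phi(X^{t,Z})$ in $\mathcal{H}^p_\mathcal{P}(H)$. This is done term by term: the initial datum term converges pathwise by~(i), combined with the uniform bound $\sup_{n,s\leq T}\|e^{sA_n}\|_{L(H)}\leq C$ (from Banach--Steinhaus and the $C_0$-semigroup structure) to give $L^p$-dominated convergence; the Bochner integral converges by~(ii), Lemma~\ref{GD} and the majorant $CM(1+|X^{t,Z}|_\infty)$; finally, the stochastic integral is handled by a BDG/factorization argument that reduces the claim to
\[
\int_t^T |e^{(s-r)A_n}\sigma_n(r,X^{t,Z})-e^{(s-r)A}\sigma(r,X^{t,Z})|^2_{L_2(K;H)}dr \to 0 \qquad\text{a.s.,}
\]
and then an $L^{p/2}$-dominated convergence with the $n$-uniform majorant $2\hat M^2(s-r)^{-2\gamma}(1+|X^{t,Z}|_\infty)^2$, integrable thanks to $\gamma<1/2$.

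The main obstacle is the stochastic-integral step: the pointwise-in-$(\omega,s,r)$ convergence of the operators $e^{(s-r)A_n}\sigma_n(r,X^{t,Z})$ must be upgraded to convergence of the full stochastic convolution in $\mathcal{H}^p_\mathcal{P}(H)$, i.e.\ with the supremum in $s\in[t,T]$ taken inside the $L^p(\Omega)$-norm. This is precisely where the restriction $\gamma<1/2$, equivalently $p>p^*=2/(1-2\gamma)$, becomes essential, as it allows one to close the argument via the factorization/stochastic Fubini machinery of \cite{DPZ92} while preserving the $n$-uniform domination needed for Lemma~\ref{GD}.
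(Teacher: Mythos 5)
The paper's own proof of Proposition~\ref{prop:stabSDE} is a one-line citation to \cite[Th.\ 3.14]{Roses2016a}, so there is no in-paper argument to compare against; your self-contained contraction-comparison strategy is the natural route and is in the right spirit. The decomposition $X^{(n)}-X=[\Phi_n(X^{(n)})-\Phi_n(X)]+[\Phi_n(X)-\Phi(X)]$, combined with an $n$-uniform contraction rate coming from the $n$-independent constants $M,\hat M,\gamma$, correctly reduces the problem to showing $\Phi_n(X^{t,Z})\to\Phi(X^{t,Z})$ in $\mathcal{H}^p_\mathcal{P}(H)$, and the uniform bound \eqref{E:EstimateSDE-n} does follow by reading the constant in Theorem~\ref{Exist} as a function of $(T,p,M,\hat M,\gamma)$ only.

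A few points deserve more care than the proposal gives them. First, the bound $\sup_{n,\,s\le T}\|e^{sA_n}\|_{L(H)}<\infty$ does not follow from Banach--Steinhaus alone: pointwise in $s$ you get $\sup_n\|e^{sA_n}\|<\infty$, but to make this locally uniform in $s$ you need the submultiplicativity $\|e^{(s+s')A_n}\|\le\|e^{sA_n}\|\,\|e^{s'A_n}\|$ together with a Baire-category argument (or, equivalently, the Trotter--Kato framework); this should be said explicitly, since hypothesis~(i) is only pointwise in $s$. Second, for the initial-datum and drift terms you claim ``$L^p$-dominated convergence'', but the target norm is $\mathcal{H}^p_\mathcal{P}(H)$, which carries $\mathbb{E}\bigl[\sup_{s\in[t,T]}|\cdot|^p\bigr]$; convergence of $e^{(s-t)A_n}Z_t(\omega)$ for each fixed $s$ is not enough --- you need locally uniform (in $s$) strong convergence of the semigroups, i.e.\ the Trotter--Kato upgrade of~(i) (via dominated convergence of the Laplace transforms to get resolvent convergence), and an analogous handling of the $s$-dependence inside the drift convolution. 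You correctly flag the factorization method for the stochastic convolution; the same factorization (or an Ascoli-type argument using the uniform modulus of continuity of the convolutions) is needed for the Bochner integral as well. Third, a small slip: $\gamma<1/2$ and $p>p^*=2/(1-2\gamma)$ are not equivalent; the former makes $p^*$ finite, and the latter is an additional hypothesis on $p$.

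None of these is fatal to the strategy, but as written the proposal treats as routine several steps where the supremum over $s$ inside the expectation is exactly what must be earned; supplying the Trotter--Kato and uniform-semigroup-bound lemmas, and applying factorization also to the deterministic convolution, would close the argument.
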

\begin{proof}
See \cite[Th.\ 3.14]{Roses2016a}.
\end{proof}

\section{Path-dependent PDEs and viscosity solutions in Hilbert spaces}
\label{sec:ppde}

In the present section, we introduce a path-dependent PDE in the space $H$ and study it through the concept of viscosity solutions in the spirit of the definition given in \cite{EkrenKellerTouziZhang, EkrenTouziZhang1,RTZ14}. As in \cite{RTZ14}, we also provide an equivalent definition in terms of jets. The key result is a martingale characterization for viscosity sub/supersolution, from which { the stability result and the comparison principle follow}. We finally prove the existence of a viscosity solution through a fixed point argument.

\smallskip

Assumption~\ref{A:SHDE} on the coefficients $A,b,\sigma$ will be standing for the remaining part of this section.

\subsection{Definition: test functions and semijets}\label{sec:2014-06-17:aa}

We begin introducing
 the set
$C^{1,2}_{X}(\Lambda)$ of smooth functions, which will be used to define test functions. We note that the definition of the latter set shall depend on the process $X^{t,\mathbf{x}}$, solution to \eqref{SHDE}, that is on the coefficients $A,b,\sigma$. The subscript $X$ in the notation $C^{1,2}_{X}(\Lambda)$ stays there to recall that.

\begin{Definition}\label{def:smooth_functions}
 We say that $u\in C^{1,2}_{X}(\Lambda)$ if
 $u\in \cpol(\Lambda)$ and there exist $\alpha\in \cpol(\Lambda)$, $\beta\in \cpol(\Lambda;K)$
 such that, 
for all $(t,\mathbf{x})\in\Lambda$,
$\mathbb{P}$-a.s. 
\begin{equation}
\label{eq:fct_Ito}
du(s,X^{t,\mathbf x})  =  \alpha(s,X^{t,\mathbf x})ds + \langle \beta(s,X^{t,\mathbf x}),dW_s\rangle_{K}, \qquad \forall\, 
 s\in[t,T].
\end{equation}

\end{Definition}
Note that Theorem~\ref{Exist} guarantees integrability in \eqref{eq:fct_Ito}.
Note also that $\alpha$ and $\beta$ in Definition~\ref{def:smooth_functions} are uniquely determined, as it can be easily shown by identifying the finite variation part and the Brownian part in \eqref{eq:fct_Ito}. Given $u\in C^{1,2}_{X}(\Lambda)$, we define
\begin{equation}\label{def:L}
\Lc u \coloneqq   \alpha.
\end{equation}

Before to proceed, we argue to motivate the notation above and the meaning of $\mathcal{L}$ as a generalization of a second order  differential operator.
The class of test functions used to define viscosity solutions for path-dependent PDEs has evolved from \cite{EkrenKellerTouziZhang} and \cite{EkrenTouziZhang1} to the recent work \cite{RTZ14}. In Definition \ref{def:smooth_functions}, which is inspired by \cite{RTZ14}, there is no more reference to the so-called pathwise (or functional, or Dupire) derivatives (for which we refer to \cite{Dupire} and also to \cite{CF10a,CF10b,CF13,CR14}), which are instead adopted in \cite{EkrenKellerTouziZhang} and \cite{EkrenTouziZhang1} (actually in \cite{EkrenTouziZhang1} only the pathwise time derivative is used). This allows to go directly to the definition of viscosity solution, without pausing on the definition of pathwise derivatives, and, more generally, on recalling tools from functional It\^o calculus. However, the class of test functions used in \cite{EkrenKellerTouziZhang} or \cite{EkrenTouziZhang1} has the advantage to be defined in a similar way to $C^{1,2}$, the standard class of functions 
continuously Fr\'echet differentiable once in time and twice in space. In this case the object $\Lc u$ of \eqref{def:L}, which in the present paper is only abstract,  can be expressed in terms of the pathwise derivatives, as in the non path-dependent case, where $\Lc$ corresponds to a parabolic operator and can be written by means of time and spatial derivatives.

For this reason, in order to better understand Definition \ref{def:smooth_functions} and the notation $\Lc u$, we now define a subset of test functions $\mathscr{C}_X^{1,2}(\Lambda)\subset C^{1,2}_{X}(\Lambda)$ which admit the pathwise derivatives we are going to define. Here we follow \cite{EkrenTouziZhang1}, generalizing it to the present infinite-dimensional setting.
\begin{Definition}
\label{D:TimeDer}
Given $u\in \cpol(\Lambda)$, we  define the \textbf{pathwise time derivative} of $u$ at $(t,\mathbf x)\in\Lambda$ as follows:
\[
\begin{cases}
\partial_t u(s,\mathbf{x})\defeq \lim_{h\rightarrow0^+} \dfrac{ u(s+h,\mathbf{x}_{\cdot \wedge s}) - u(s,\mathbf{x})}{h},   &s\in[0,T),\\\\
\partial_t u(T,\mathbf{x})\defeq \lim_{s\rightarrow T^-}\partial_t u(s,\mathbf{x}), &s=T,
\end{cases}
\]
when these limits exist.
\end{Definition}
In the following definition $A^*$ is the adjoint operator of $A$, defined on $\mathcal{D}(A^*)\subset H$.

\begin{Definition}
\label{D:SpaceDer}
Denote by $S(H)$ the Banach space of bounded and self-adjoint operators in the Hilbert space $H$ endowed with the operator norm, and let  $\mathcal{D}(A^*)$ be endowed  with the graph norm, which makes it a Hilbert space. We say that $u\in \cpol(\Lambda)$ belongs to $\mathscr C^{1,2}_{X}(\Lambda)$ if:
\begin{enumerate}[(i)]
\setlength\itemsep{0.05em}
\item\label{2016-10-04:00} \vskip-5pt
there exists $\partial_t u$ in $\Lambda$ in the sense of Definition \ref{D:TimeDer} and it  belongs to $\cpol(\Lambda)$;
\item\label{2016-10-04:01}
there exist two maps $\partial_{\mathbf x}u\in \cpol(\Lambda;\mathcal{D}(A^*))$ and $\partial_{\mathbf{xx}}^2u\in \cpol(\Lambda;S(H))$ such that $\textup{Tr}\left[\sigma \sigma^*\partial_{\mathbf{xx}}^2u\right]$  is finite over $\Lambda$ and
the following \textbf{functional It\^o formula} holds for all $(t,\mathbf{x})\in \Lambda$:
\begin{equation}\label{functionalIto}
du(s,X^{t,\mathbf x})  =  \mathcal{L}_0 u(s,X^{t,\mathbf{x}}) ds 
+ \langle \sigma^*(s,X^{t,\mathbf x})\partial_{\mathbf x}u(s,X^{t,\mathbf x}),dW_s\rangle,
\end{equation}
for  $s\in [t,T]$,
where, for $(s,\mathbf{y})\in \Lambda$,
\begin{multline}\label{defL}
 \mathcal{L}_0u(s,\mathbf{y}) \coloneqq  \partial_t u(s,\mathbf y) + \langle \mathbf{y}_t,A^*\partial_{\mathbf x}u(s,\mathbf{y})\rangle \\+ \langle b(s,\mathbf{y}),\partial_{\mathbf x}u(s,\mathbf{y})\rangle
+ \frac{1}{2}\textup{Tr}\big[\sigma(s,\mathbf y)\sigma^*(s,\mathbf y)\partial_{\mathbf{xx}}^2u(s,\mathbf{y})\big].
\end{multline}
\end{enumerate}
We call $\partial_{\mathbf x}u$ and $\partial_{\mathbf{xx}}^2u$ \textbf{pathwise first order spatial derivative} and \textbf{pathwise second order spatial derivative} of $u$ with respect to $X$, respectively.
\end{Definition}
\smallskip

Notice that, given $u\in\mathscr C^{1,2}_{X}(\Lambda)$ and $(t,\mathbf{x})\in\Lambda$, the objects  $\partial_{\mathbf x}u$ and $\partial_{\mathbf{xx}}^2u$ are not necessarily uniquely determined, while
$
 \mathcal{L}_0u$ defined as in \eqref{defL} and $\sigma^*\partial_{\mathbf x}u$ are uniquely determined (this can be shown by identifying the
 part
with  finite variation and the Brownian part in the functional It\^o formula \eqref{functionalIto}). Moreover, if $u\in\mathscr C^{1,2}_{X}(\Lambda)$, then  \eqref{eq:fct_Ito} is satisfied with
\begin{equation*}
  \begin{dcases}
          \alpha(t,\mathbf{x}) =& \partial_t u(t,\mathbf x) + \langle \mathbf{x}_t,A^*\partial_{\mathbf x}u(t,\mathbf{x})\rangle\\
& + \langle b(t,\mathbf{x}),\partial_{\mathbf x}u(t,\mathbf{x})\rangle
+ \frac{1}{2}\textup{Tr}\big[\sigma(t,\mathbf x)\sigma^*(t,\mathbf x)\partial_{\mathbf{xx}}^2u(t,\mathbf{x})\big],\\
\beta(t,\mathbf x) =& \sigma^*(t,\mathbf x)\partial_{\mathbf x}u(t,\mathbf x).
\end{dcases}
\end{equation*}
In particular, $\mathscr C^{1,2}_{X}(\Lambda)\subset C^{1,2}_{X}(\Lambda)$
and  the operator $\mathcal{L}$ acts on the elements of $\mathscr C^{1,2}_{X}(\Lambda)$ as a differential operator. Indeed, in this case $\mathcal{L}u=\mathcal{L}_0u$ with $\mathcal{L}_0u$ defined in \eqref{defL}.

\begin{Remark}\label{remppp}
One of the key ingredients of the notion of viscosity solution we are going to define is the concept of test function introduced in Definition \ref{def:smooth_functions}. Notice that, the larger the class of test functions, the easier should be the proof of the comparison principle and the harder the proof of the existence.
 In order to make easier the proof of uniqueness, we weaken the concept of test functions as much as possible, 
but  keeping ``safe'' the existence part. The space $C^{1,2}_{X}(\Lambda)$ is the result of this trade-off. It is a quite large class of test functions: for example,
if $f\in \cpol(\Lambda)$, then $\varphi(t,\mathbf x)\coloneqq \int_0^t f(s,\mathbf x)ds$ belongs to $C^{1,2}_{X}(\Lambda)$, whereas, even if $H=\R$ and $f$ is Markovian (i.e., $f(s,\mathbf{x})=f(s,\mathbf{x}_s)$), it does not belong, in general, to the usual class  $C^{1,2}([0,T]\times\R;\R)$ of smooth  functions.
\end{Remark}

We are concerned with the  study of the following \emph{path-dependent} PDE (from now on, PPDE):
\begin{equation}
\mathcal{L} u(t,\mathbf{x})
+ {F}(t,\mathbf{x},u(t,\mathbf{x})) = 0, \qquad \forall \,(t,\mathbf x)\in\Lambda, \ t<T, \label{eq:PPDE}
 \end{equation}
 with terminal condition
 \begin{equation}
u(T,\mathbf{x}) =  \xi(\mathbf{x}), \quad \mathbf{x}\in \mathbb W, \label{terminal}
\end{equation}
where $F\colon\Lambda\times \mathbb{R}\to \mathbb{R}$ and $\xi\colon \mathbb W\to \mathbb{R}$.
 From what we have said above, if $u\in\mathscr C^{1,2}_{X}(\Lambda)$, then  \eqref{eq:PPDE} can be written in the form \eqref{PPDE-intro}, expressing $\mathcal Lu(t,\mathbf x)$ in terms of the pathwise derivatives of $u$. Motivated by that, even if in general $\mathcal{L}$ is not a differential operator, we still keep the terminology PPDE to refer to  \eqref{eq:PPDE}.

Now we introduce the concept of viscosity solution for PPDE (\ref{eq:PPDE}), following \cite{EkrenKellerTouziZhang,EkrenTouziZhang1,RTZ14}. To this end, we denote
$$
\mathcal{T}  \defeq  \big\{\tau\colon\Omega\rightarrow [0,T] \colon\  \tau \ \mbox{is an} \ \mathbb{F}\mbox{-stopping time}\big\}.
$$
Given $u\in \cpol(\Lambda)$, we define the following two classes of test functions:
\begin{multline*}
\underline{\Ac} u(t,\mathbf{x}) \defeq  \Big\{ \varphi \in C^{1,2}_{X}(\Lambda) \colon \text{there exists }\H\in\mathcal{T},\ \H>t, \mbox{ such that } \\
(\varphi-u)(t,\mathbf{x}) = \min_{\tau \in \Tc,\,\tau\geq t}\E\big[ (\varphi-u)(\tau\wedge\H,X^{t,\mathbf{x}}) \big] \Big\},
\end{multline*}
\begin{multline*}
\overline{\Ac} u(t,\mathbf{x}) \defeq  \Big\{ \varphi \in C^{1,2}_{X}(\Lambda) \colon \text{there exists }\H\in\mathcal{T},\ \H>t, \mbox{ such that } \\
(\varphi-u)(t,\mathbf{x}) = \max_{\tau \in \Tc,\,\tau\geq t}\E\big[ (\varphi-u)(\tau\wedge\H,X^{t,\mathbf{x}}) \big] \Big\}.
\end{multline*}

 \begin{Remark}\label{rem:H}
Throughout this section, the fact that the localizing stopping time $\H$ in the definition of test functions above is \emph{stochastic} does not play a role in the proofs: actually, the definition could be given with deterministic localizing times $\H$ and the proofs would work as well. This comment applies also to the definition of test functions given in Section \ref{S:Extension}. However, we  keep the definition with stochastic stopping times $\H$, as this enlarges the set of test functions and so, in principle, makes easier uniqueness\ (\,\footnote{The existence part  --- not only for the equations treated  in the present section, but also  for those treated in Subsection \ref{SubS:Existence} --- is still kept safe by this enlargement of the set of test functions defined with localizing stochastic stopping times.}). This might be needed or useful to treat other types of equations  and/or to prove stronger comparison results than those provided here 
(see \cite{RTZ14}).
\end{Remark}

\begin{Definition} \label{def:visc_sol_PPDE}
Let  $u\in \cpol(\Lambda)$.
\begin{enumerate}[(i)]
\setlength\itemsep{0.05em}
\item \vskip-5pt
 We say that $u$ is a \textbf{viscosity subsolution} (resp.\ \textbf{supersolution}) of PPDE~\eqref{eq:PPDE} if
\begin{equation*}
  - \Lc \varphi(t,\mathbf{x})
- {F}(t,\mathbf{x}, u(t,\mathbf{x}))
\leq 0, \qquad  (\mbox{resp.}\  \ge 0)
\end{equation*}
for any $(t,\mathbf{x}) \in \Lambda$, $t<T$, and any $\varphi \in \Acb u(t, \mathbf{x})$ (resp.\ $\varphi \in \Acu u(t, \mathbf{x})$).
\item
 We say that $u$ is a \textbf{viscosity solution} of PPDE~\eqref{eq:PPDE} if it is both a viscosity subsolution and a viscosity supersolution.
\end{enumerate}
\end{Definition}

Following \cite{RTZ14}, we now provide an equivalent definition of viscosity solution in terms of semijets.
Given $u\in \cpol(\Lambda)$, define the \emph{subjet} and \emph{superjet} of $u$ at $(t,\mathbf x)\in\Lambda$ as
\begin{align*}
\underline{\Jc} u(t,\mathbf{x})  &\defeq  \big\{ \alpha\in \R \colon  \exists \, \varphi\in\underline{\Ac} u(t,\mathbf{x})\mbox{ such that }\varphi(s,\mathbf{y})=\alpha s,\ \forall\, (s,\mathbf{y})\in\Lambda \big \}, \\[5pt]
\overline{\Jc} u(t,\mathbf{x})  &\defeq  \big\{ \alpha\in \R \colon  \exists\, \varphi\in\overline{\Ac} u(t,\mathbf{x})\mbox{ such that }\varphi(s,\mathbf{y})=\alpha s,\ \forall\, (s,\mathbf{y})\in\Lambda \big \}.
\end{align*}
We have the following equivalence result.

\begin{Proposition}
\label{prop:jets}
$u\in\cpol(\Lambda)$ is a viscosity subsolution (resp.\ supersolution) of PPDE \eqref{eq:PPDE} if and only if
\begin{equation*}
  -\alpha - F(t,\mathbf x,u(t,\mathbf x))
\leq 0, \qquad  (\mbox{resp.}\ \ge 0),
\end{equation*}
for every $\alpha\in\underline{\Jc} u(t,\mathbf{x})$ (resp.\ $\alpha
\in\overline{\Jc} u(t,\mathbf{x})$).
\end{Proposition}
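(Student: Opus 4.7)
The plan is to establish the two implications separately, with the easy direction being essentially tautological and the substantive work going into the converse.

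For the direction ``test-function viscosity $\Rightarrow$ semijet viscosity'', I would simply observe that for any $\alpha \in \R$ the linear map $\varphi(s,\mathbf{y}) \coloneqq \alpha s$ lies in $C^{1,2}_X(\Lambda)$: indeed, along $X^{t,\mathbf{x}}$ one has $d\varphi(s,X^{t,\mathbf{x}}) = \alpha\,ds$, so $\varphi$ fits Definition~\ref{def:smooth_functions} with constant drift $\alpha$ and diffusion coefficient $0$, giving $\mathcal{L}\varphi \equiv \alpha$. Hence whenever $\alpha \in \underline{\Jc}u(t,\mathbf{x})$ (resp.\ $\overline{\Jc}u(t,\mathbf{x})$) there is an associated $\varphi \in \underline{\Ac}u(t,\mathbf{x})$ (resp.\ $\overline{\Ac}u(t,\mathbf{x})$) with $\mathcal{L}\varphi(t,\mathbf{x}) = \alpha$, and the viscosity inequality in Definition~\ref{def:visc_sol_PPDE} reads exactly $-\alpha - F(t,\mathbf{x},u(t,\mathbf{x})) \leq 0$ (resp.\ $\geq 0$).

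For the converse, I treat the subsolution case in detail (supersolution being symmetric). Fix $\varphi \in \underline{\Ac}u(t,\mathbf{x})$ with localizing stopping time $\H > t$, and set $\alpha_0 \coloneqq \mathcal{L}\varphi(t,\mathbf{x})$. For $\epsilon > 0$ introduce the stopping time
\[
\H_\epsilon \coloneqq \H \wedge \inf\{s \geq t : \mathcal{L}\varphi(s,X^{t,\mathbf{x}}) \geq \alpha_0 + \epsilon\},
\]
which satisfies $\H_\epsilon > t$ a.s.\ by continuity of $s \mapsto \mathcal{L}\varphi(s,X^{t,\mathbf{x}})$ (recall $\mathcal{L}\varphi \in \cpol(\Lambda)$ and $X^{t,\mathbf{x}}$ is continuous, so the integrand is continuous at $s=t$). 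I would then show that $\psi(s,\mathbf{y}) \coloneqq (\alpha_0 + \epsilon)\,s$ belongs to $\underline{\Ac}u(t,\mathbf{x})$ with localizing time $\H_\epsilon$. To this end, combine the Itô-type decomposition \eqref{eq:fct_Ito} for $\varphi$ with the bound $\mathcal{L}\varphi \leq \alpha_0 + \epsilon$ on $[t,\H_\epsilon]$ to obtain, for any $\tau \in \mathcal{T}$ with $\tau \geq t$,
\[
\E\bigl[\varphi(\tau \wedge \H_\epsilon,X^{t,\mathbf{x}}) - \varphi(t,\mathbf{x})\bigr] = \E\!\left[\int_t^{\tau \wedge \H_\epsilon} \mathcal{L}\varphi(s,X^{t,\mathbf{x}})\,ds\right] \leq (\alpha_0 + \epsilon)\,\E[\tau \wedge \H_\epsilon - t].
\]
On the other hand, because $\H_\epsilon \leq \H$ and $\tau \wedge \H_\epsilon \geq t$, the defining minimality property of $\varphi$ (applied with the admissible stopping time $\tau \wedge \H_\epsilon$) gives $(\varphi - u)(t,\mathbf{x}) \leq \E[(\varphi - u)(\tau \wedge \H_\epsilon, X^{t,\mathbf{x}})]$. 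Combining the last two inequalities yields $u(t,\mathbf{x}) - \E[u(\tau \wedge \H_\epsilon, X^{t,\mathbf{x}})] \geq -(\alpha_0+\epsilon)\,\E[\tau \wedge \H_\epsilon - t]$, which is precisely $(\psi - u)(t,\mathbf{x}) \leq \E[(\psi-u)(\tau\wedge \H_\epsilon, X^{t,\mathbf{x}})]$; equality holds at $\tau = t$, so the minimum in the definition of $\underline{\Ac}u(t,\mathbf{x})$ is attained and $\psi \in \underline{\Ac}u(t,\mathbf{x})$. Therefore $\alpha_0 + \epsilon \in \underline{\Jc}u(t,\mathbf{x})$, and the semijet inequality gives $-(\alpha_0 + \epsilon) - F(t,\mathbf{x},u(t,\mathbf{x})) \leq 0$. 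Letting $\epsilon \downarrow 0$ concludes. The supersolution case uses $\H_\epsilon = \H \wedge \inf\{s \geq t : \mathcal{L}\varphi(s,X^{t,\mathbf{x}}) \leq \alpha_0 - \epsilon\}$ and $\psi(s,\mathbf{y}) = (\alpha_0 - \epsilon)s$, with the inequalities reversed.

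The only delicate point is verifying that the ``linearized'' test function $\psi$ indeed lies in $\underline{\Ac}u(t,\mathbf{x})$ (resp.\ $\overline{\Ac}u(t,\mathbf{x})$): this is where the stochastic localization $\H_\epsilon$ must be chosen carefully so that, along $X^{t,\mathbf{x}}$, the drift of $\varphi$ is controlled by the constant $\alpha_0 \pm \epsilon$, permitting the replacement of $\varphi$ by an affine function in time. Everything else is a straightforward unwinding of the definitions combined with the Itô formula \eqref{eq:fct_Ito}.
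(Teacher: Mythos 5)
Your proof is correct and follows essentially the same approach as the paper's: you replace $\varphi$ by the affine test function $\psi(s,\mathbf{y})=(\alpha_0+\epsilon)s$, introduce a localizing stopping time $\H_\epsilon$ under which $\mathcal{L}\varphi\leq\alpha_0+\epsilon$, and unwind the minimality property of $\varphi$ via the It\^o decomposition. The only difference is cosmetic: the paper defines $\H_\epsilon$ as the exit time of $(s,X^{t,\mathbf{x}})$ from a $\mathbf{d}_\infty$-ball of radius $\delta_\epsilon$ chosen by continuity of $\mathcal{L}\varphi$, whereas you take $\H_\epsilon$ directly as the first time $\mathcal{L}\varphi(s,X^{t,\mathbf{x}})$ reaches the level $\alpha_0+\epsilon$; both choices achieve the same bound and are a.s.\ strictly greater than $t$.
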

\begin{proof}
We focus on the \emph{`if'} part, since the other implication is clear. Fix $(t,\mathbf x)\in\Lambda$ and $\varphi\in\underline{\Ac} u(t,\mathbf{x})$ (the supersolution part has a similar proof).
From Definition \ref{def:smooth_functions} we know that there exists 
$\Lc\varphi\coloneqq \alpha\in\cpol(\Lambda)$ and $\beta\in \cpol(\Lambda;H)$  such that \eqref{eq:fct_Ito} holds, with $\varphi$ in place of $u$. Set
\[
\alpha_0  \coloneqq   \Lc\varphi(t,\mathbf x)  = \alpha(t,\mathbf{x})
\]
and, for every $\eps>0$, consider $\varphi_\eps(s,\mathbf{y})\coloneqq (\alpha_0+\eps) s$, for all $(s,\mathbf y)\in\Lambda$. Then $\varphi_\eps\in C^{1,2}_{X}(\Lambda)$. Since $\Lc\varphi$ is continuous, we can find $\delta_\varepsilon>0$ such that
\[
\big|\Lc\varphi(t',\mathbf x') - \alpha_0\big|  = \big|\Lc\varphi(t',\mathbf x') - \Lc\varphi(t,\mathbf x)\big|  \leq  \eps, \quad \mbox{if\ }\textbf{d}_\infty \big((t',\mathbf{x}'),(t,\mathbf{x})\big)  \leq  \delta_\varepsilon.
\]
Let $\H$ be the stopping time associated to $\varphi$ appearing in the definition of $\underline{\Ac} u(t,\mathbf{x})$ and define
\[
\H_\eps  \coloneqq   \H\wedge\big\{s\geq t\colon\textbf{d}_\infty \big((s,X^{t,\mathbf x}),(t,\mathbf{x})\big)  >  \delta_\eps\big\}.
\]
Note that $\H_\eps>0$. Then, for any $\tau\in\Tc$ with $\tau\geq t$, we have
\begin{equation}
\label{eq1}
\begin{split}
  (u-\varphi_\eps)(t,\mathbf x) - \E\big[ (u-&\varphi_\eps)(\tau\wedge\H_\eps,X^{t,\mathbf{x}}) \big]=\\
=& (u-\varphi)(t,\mathbf x) - \E\big[ (u-\varphi)(\tau\wedge\H_\eps,X^{t,\mathbf{x}}) \big] \\
&+ \E\big[ (\varphi_\eps-\varphi)(\tau\wedge\H_\eps,X^{t,\mathbf{x}})\big] - (\varphi_\eps-\varphi)(t,\mathbf x)   \\
\geq& \E\big[ (\varphi_\eps-\varphi)(\tau\wedge\H_\eps,X^{t,\mathbf{x}}) \big] - (\varphi_\eps-\varphi)(t,\mathbf x),
\end{split}
\end{equation}
where the last inequality follows from the fact that $\varphi\in\underline{\Ac} u(t,\mathbf{x})$. Since $\varphi$ and $\varphi_\eps$ belong to $C^{1,2}_{X}(\Lambda)$, we can write
\begin{equation}
\label{eq2}
\mathbb{E}\left[\varphi(\tau\wedge \H_\eps,X^{t,\mathbf{x}})\right]  =  \varphi(t,\mathbf{x})+\mathbb{E}\left[\int_t^{\tau\wedge \H_\eps}\mathcal{L}\varphi(s,X^{t,\mathbf{x}})ds
\right]
\end{equation}
and, clearly, we also have
\begin{equation}
\label{eq3}
\mathbb{E}\left[\varphi_\eps(\tau\wedge \H_\eps,X^{t,\mathbf{x}})\right]
=  \varphi_\eps(t,\mathbf{x})+\mathbb{E}\left[\int_t^{\tau\wedge \H_\eps} \big(\alpha_0+\eps\big) ds
\right].
\end{equation}
Plugging \eqref{eq2} and \eqref{eq3} into \eqref{eq1}, we obtain
\begin{multline*}
  (\varphi_\eps-u)(t,\mathbf x) - \E\big[ (\varphi_\eps-u)(\tau\wedge\H_\eps,X^{t,\mathbf{x}}) \big]  \leq\\
\leq  \mathbb{E}\left[\int_t^{\tau\wedge \H_\eps} \big( \mathcal{L}\varphi(s,X^{t,\mathbf{x}})-(\alpha_0+\eps)\big) ds\right] \leq  0,
\end{multline*}
where the last inequality follows by definition of $\H_\eps$.
It follows that $\varphi_\eps \in \underline{\mathcal{A}}(t,\mathbf{x})$,  hence that $\alpha_0+\eps\in\underline{\Jc} u(t,\mathbf{x})$, therefore
\[
-(\mathcal{L}\varphi(t,\mathbf{x})+\eps) - F(t,\mathbf x,u(t,\mathbf x))  =  -(\alpha_0+\varepsilon) - F(t,\mathbf x,u(t,\mathbf x))  \leq  0.
\]
By arbitrariness of $\eps$ we conclude.
\end{proof}

\begin{Remark}\label{rem:beta}
The map $\beta$ introduced in Definition \ref{def:smooth_functions} plays no role in the study of viscosity solutions of equation \eqref{eq:PPDE}. This can be seen, for instance, as a consequence of Proposition \ref{prop:jets}, since the definitions of sub/superjet $\underline{\Jc} u(t,\mathbf{x})$ and $\overline{\Jc} u(t,\mathbf{x})$ do not involve $\beta$. However,
$\beta$
 becomes relevant in the study of  nonlinear PPDEs such as those investigated in Section \ref{S:Extension} (see, notably, the definition of sub/superjet of Subsection \ref{SubS:Punctual} and the expression of $\mathcal L^a\varphi$ reported in \eqref{Lu_punctual}).
\end{Remark}

\subsection{Martingale characterization and stability}
\label{sec:stability}

 In the sequel, we shall consider the following conditions on $F$.
\begin{Assumption}\label{A:BSDE}
${}$
  \begin{enumerate}[(i)]
\setlength\itemsep{0.05em}
\item 
\label{2016-08-06:04}
\vskip-5pt
$F\colon\Lambda\times \mathbb{R}\to \mathbb{R}$  is continuous and satisfies the following growth condition: there exist $L>0, \ p\geq 0$ such that
\begin{equation}\label{A:BSDE_F}
|F(t,\mathbf{x},y)|\leq L(1+|  \mathbf{x}|_\infty^p+|y|),
\qquad  \forall\,(t,\mathbf x)\in \Lambda, \ \forall\,y\in \R.
\end{equation}
\item\label{2016-10-17:00}
$F$ is Lipschitz with respect to the third variable, uniformly in the other ones, i.e.\  there exists  $\hat L>0$ such that
\begin{equation}\label{F:lip}
|F(t,\mathbf{x},y)
-
F(t,\mathbf{x},y')|\leq \hat L |y-y'|,\qquad \forall\, (t,\mathbf{x})\in \Lambda, \ \forall\,y,y'\in \mathbb{R}.
\end{equation}
  \end{enumerate}
\end{Assumption}

We now state the main result of this section, the sub(super)martingale characterization for viscosity sub(super)solutions of   PPDE \eqref{eq:PPDE}.

\begin{Theorem}\label{lemma:pp1}
Let Assumptions \ref{A:SHDE}
 and \ref{A:BSDE}(\ref{2016-08-06:04}) hold and let $u\in  \cpol(\Lambda)$.
 The following facts are equivalent.
\begin{enumerate}[(i)]
\setlength\itemsep{0.05em}
\item 
\label{2016-06-21:02}
\vskip-5pt For every  $(t,\mathbf{x})\in\Lambda$, $s\in[t,T]$,
\begin{equation}\label{hjn}
 u(t,\mathbf{x}) \leq  \E\bigg[u(s,X^{t,\mathbf{x}}) + \int_t^{s} {F}(r,X^{t,\mathbf{x}}, u(r,X^{t,\mathbf{x}}))dr\bigg],
\end{equation}
(resp., $\geq$).
\item\label{2016-08-06:05} For every  $(t,\mathbf{x})\in\Lambda$, the process
\begin{equation}\label{eq:2014-05-08:ab}
\left(u(s,X^{t,\mathbf{x}})  +\int_t^s F(r,X^{t,\mathbf{x}}, u(r,X^{t,\mathbf{x}})) d r\right)_{s\in[t,T]}
\end{equation}
is a $(\mathcal{F}_s)_{s\in[t,T]}$-submartingale (resp., supermartingale).
\item\label{2016-06-21:03} $u$ is a viscosity subsolution (resp., supersolution) of  PPDE \eqref{eq:PPDE}.
\end{enumerate}
\end{Theorem}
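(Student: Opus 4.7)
\bigskip

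\noindent\textbf{Proof plan.} I will treat only the subsolution/submartingale case; the supersolution case is symmetric. The scheme is the cyclic chain of implications (ii) $\Rightarrow$ (i) $\Rightarrow$ (ii), followed by (ii) $\Rightarrow$ (iii) and the hard direction (iii) $\Rightarrow$ (i).

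\emph{Easy equivalence between (i) and (ii).} The implication (ii) $\Rightarrow$ (i) is immediate by taking expectations at the endpoints $t$ and $s$ of the submartingale property. For (i) $\Rightarrow$ (ii), I exploit the flow property \eqref{flow}: given $t\le s\le s'\le T$, the process $X^{t,\mathbf x}$ on $[s,T]$ coincides with $X^{s,X^{t,\mathbf x}}$, so conditioning on $\mathcal{F}_s$ and applying the deterministic inequality (i) at the (random) point $(s,X^{t,\mathbf x}_{\cdot\wedge s})$ gives the conditional version, i.e.\ the submartingale inequality. Measurability is ensured by the joint continuity of $(t,Z)\mapsto X^{t,Z}$ from Theorem \ref{Exist}, and integrability by the polynomial growth of $u$ and $F$ combined with the moment estimate \eqref{E:EstimateSDE} (see observations (O\ref{2016-08-09:03})--(O\ref{2016-08-09:04})) and Assumption \ref{A:BSDE}(\ref{2016-08-06:04}).

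\emph{Implication (ii) $\Rightarrow$ (iii).} Fix $(t,\mathbf x)\in\Lambda$ with $t<T$ and $\varphi\in\underline{\mathcal A}u(t,\mathbf x)$ with associated stopping time $\H>t$. By the definition of $\underline{\mathcal A}$, for every stopping time $\tau\ge t$ one has $(\varphi-u)(t,\mathbf x)\le\mathbb{E}[(\varphi-u)(\tau\wedge\H,X^{t,\mathbf x})]$. Combining this with the submartingale inequality applied at times $t$ and $\tau\wedge\H$ yields
\[
\varphi(t,\mathbf x)\le\mathbb{E}\Big[\varphi(\tau\wedge\H,X^{t,\mathbf x})+\int_t^{\tau\wedge\H}F\big(r,X^{t,\mathbf x},u(r,X^{t,\mathbf x})\big)\,dr\Big].
\]
Since $\varphi\in C^{1,2}_X(\Lambda)$, the Itô-type identity \eqref{eq:fct_Ito} (after an additional localization of $\H$ that kills the stochastic integral in expectation) gives
\[
0\le \mathbb{E}\Big[\int_t^{\tau\wedge\H}\big(\mathcal{L}\varphi+F(r,X^{t,\mathbf x},u(r,X^{t,\mathbf x}))\big)\,dr\Big].
\]
Choosing $\tau=(t+h)\wedge\H$, dividing by $h$, letting $h\downarrow 0$, and using continuity of $\mathcal{L}\varphi$ and of $F(\cdot,\cdot,u(\cdot,\cdot))$ together with dominated convergence (Lemma \ref{GD}, Corollary \ref{corr:2014-10-29:aa}) deliver the viscosity subsolution inequality $-\mathcal{L}\varphi(t,\mathbf x)-F(t,\mathbf x,u(t,\mathbf x))\le 0$.

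\emph{Implication (iii) $\Rightarrow$ (i): the hard direction.} I argue by contradiction and work with semijets (Proposition \ref{prop:jets}). Assume there exist $(t_0,\mathbf x_0)\in\Lambda$, $s_0\in(t_0,T]$ and $\delta>0$ with
\[
u(t_0,\mathbf x_0)>\mathbb{E}\Big[u(s_0,X^{t_0,\mathbf x_0})+\int_{t_0}^{s_0}F\big(r,X^{t_0,\mathbf x_0},u(r,X^{t_0,\mathbf x_0})\big)\,dr\Big]+\delta.
\]
The goal is to locate a point $(t_\ast,\mathbf x_\ast)$ (possibly after a localization along the path) and a constant $\alpha$ such that $\alpha\in\underline{\mathcal J}u(t_\ast,\mathbf x_\ast)$ while $-\alpha-F(t_\ast,\mathbf x_\ast,u(t_\ast,\mathbf x_\ast))>0$, contradicting (iii) via Proposition \ref{prop:jets}. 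The standard device (adapted from \cite{RTZ14}) is an optimal-stopping/Snell envelope construction: introduce the penalized process $M_s\coloneqq u(s,X^{t_0,\mathbf x_0})+\int_{t_0}^sF(r,X^{t_0,\mathbf x_0},u(r,X^{t_0,\mathbf x_0}))\,dr$ and its Snell envelope from the right, isolate the first time $\theta\ge t_0$ at which the submartingale defect occurs on a set of positive probability, and then use continuity of $u$, $F$ and $X^{t_0,\mathbf x_0}$ to select a constant $\alpha$ slightly smaller than $-F(\theta,X^{t_0,\mathbf x_0},u(\theta,X^{t_0,\mathbf x_0}))$ for which the linear-in-time test function $\varphi(s,\mathbf y)\coloneqq \alpha s$, together with an appropriate localizing stopping time $\H$, satisfies the defining inequality of $\underline{\mathcal A}u$ at that first point. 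Substituting into the subsolution property yields the contradiction $-\alpha-F(\theta,X^{t_0,\mathbf x_0}_{\cdot\wedge\theta},u)\le 0$.

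\emph{Expected main obstacle.} The routine parts are Itô-type localization and the moment/continuity bookkeeping; these are controlled by the standing assumptions and the SDE theory of Section \ref{sec:sde}. The genuine difficulty lies in the Snell-envelope construction of the contradicting test function in (iii) $\Rightarrow$ (i): in infinite dimensions the classical finite-dimensional arguments that lean on local compactness of the state space (e.g.\ to produce minimizing sequences converging to a minimum) are unavailable. The reason the jet formulation of Proposition \ref{prop:jets} is crucial here is precisely that it allows one to work with constant-in-space test functions $\varphi(s,\mathbf y)=\alpha s$, whose membership in $\underline{\mathcal A}u$ depends only on an averaged (expectation) inequality along the paths of $X^{t,\mathbf x}$, thereby bypassing any need for pointwise compactness-based extraction.
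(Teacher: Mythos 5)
Your overall plan follows the same route as the paper: the easy equivalence of (\ref{2016-06-21:02}) and (\ref{2016-08-06:05}) via the flow property \eqref{flow} and the conditional-independence lemma; (\ref{2016-08-06:05})$\Rightarrow$(\ref{2016-06-21:03}) by feeding a test function through the It\^o identity \eqref{eq:fct_Ito}, then shrinking $\tau=t+h$ and dividing by $h$; and (\ref{2016-06-21:03})$\Rightarrow$(\ref{2016-06-21:02}) by an optimal-stopping/Snell-envelope argument in the spirit of Theorem~\ref{teo:OS} and Lemma~\ref{lemma:pp}. The small stylistic difference is that you route the hard implication through the semijet characterization of Proposition~\ref{prop:jets}, while the paper works directly with the path-dependent test function $\varphi(s,\mathbf z)=-\int_0^s f(r,\mathbf z)\,dr$; both are admissible since Proposition~\ref{prop:jets} is already established.

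However, your sketch of (\ref{2016-06-21:03})$\Rightarrow$(\ref{2016-06-21:02}) has a genuine gap: it omits the $\varepsilon$-penalization $u_\varepsilon(r,\mathbf z)\coloneqq u(r,\mathbf z)+\varepsilon r$ on which the paper's contradiction rests. The process $M_s$ you write down is the \emph{unpenalized} candidate submartingale. The Snell-envelope argument applied to it produces a point $(a,\mathbf y)$ where $\varphi(s,\mathbf z)=-\int_0^s F(r,\mathbf z,u(r,\mathbf z))\,dr$ belongs to $\underline{\mathcal A}u(a,\mathbf y)$, hence $\mathcal L\varphi(a,\mathbf y)=-F(a,\mathbf y,u(a,\mathbf y))$ \emph{exactly}, with no slack. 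The jet localization of Proposition~\ref{prop:jets} then only delivers jet elements $\alpha_0+\eta$ with $\eta>0$, i.e.\ values $\geq -F(a,\mathbf y,u(a,\mathbf y))$; the inequality you aim for, $\alpha<-F$, cannot be reached, because the perturbation in that proof only works \emph{upward} (it relies on $\mathcal L\varphi\leq\alpha_0+\eta$ on the localized region, and the reverse inequality fails downward). So you would end up with the trivial $-\alpha-F\leq 0$, not a contradiction, and the initial gap $\delta>0$ at $(t_0,\mathbf x_0)$ does \emph{not} propagate to a strict slack at the stopping-region boundary. The remedy is precisely what the paper does: run the entire optimal-stopping argument on $u_\varepsilon$ rather than $u$, so that the test function $\varphi^\varepsilon=\varphi-\varepsilon r$ lands in $\underline{\mathcal A}u(a,\mathbf y)$ with $\mathcal L\varphi^\varepsilon(a,\mathbf y)=-F(a,\mathbf y,u(a,\mathbf y))-\varepsilon$, and the subsolution inequality then forces the clean contradiction $\varepsilon\leq 0$.
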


To prove  Theorem \ref{lemma:pp1} we need some technical results from the optimal stopping theory. Let $\phi\in \cpol(\Lambda)$.
 Given $s\in[0,T]$, define
$$
\Lambda_s\defeq    \{(t,\mathbf{x})\in\Lambda\colon \ t\in[0,s]\}
$$
and  consider the  optimal stopping problems
\begin{equation}\label{OS}
\Psi_s(t,\mathbf{x})\coloneqq \sup_{\tau\in \mathcal{T}, \, \tau\geq t} \mathbb{E}\left[ \phi (\tau\wedge s,X^{t,\mathbf{x}})
\right], \qquad
\forall\, (t,\mathbf{x})\in\Lambda_s.
\end{equation}
Using the fact that $ \phi \in \cpol(\Lambda)$,
  we see, by  Corollary \ref{corr:2014-10-29:aa}, that the functional
$$\Lambda_s\rightarrow \R, \  (t,\mathbf{x})\mapsto \mathbb{E}\left[ \phi ((\tau\wedge s)\vee t,X^{t,\mathbf{x}})
\right]$$ is well-defined
 and continuous for every $\tau\in\mathcal{T}$. We deduce that
\begin{equation}
  \begin{split}
    \Psi_s(t,\mathbf{x}) =& \sup_{\tau\in \mathcal{T}, \, \tau\geq t} \mathbb{E}\left[
 \phi (\tau\wedge s,X^{t,\mathbf{x}})
\right]\\
=&
\sup_{\tau\in \mathcal{T}} \mathbb{E}\left[ \phi ((\tau\wedge s)\vee t,X^{t,\mathbf{x}})
\right], \quad  (t,\mathbf{x})\in\Lambda_s,
\end{split}
\end{equation}
is lower semicontinuous, as it is  supremum of continuous functions.
Define the continuation region
$$\mathcal{C}_s\defeq \{(t,\mathbf{x})\in\Lambda_s |  \  \Psi_s(t,\mathbf{x})> \phi (t,\mathbf{x})\}.$$
Due to the continuity of $ \phi $ and the lower semicontinuity of $\Psi_s$, it follows that $\Cc_s$ is an open subset of $\Lambda_s$.
From the general theory of optimal stopping we have the following result.

\begin{Theorem}\label{teo:OS}
Let Assumption \ref{A:SHDE} hold. Let $s\in [0,T]$,
$(t,\mathbf{x})\in\Lambda_s$ and define the random time
$\tau^*_{t,\mathbf{x}}  \defeq  \inf\big\{r\in[t,s]\colon(r,X^{t,\mathbf{x}})\notin \mathcal{C}_s\big\}
$, with the convention $\inf\emptyset = s$. Then
$\tau_{t,\mathbf{x}}^*$ is the first  optimal stopping time  for problem \eqref{OS}.
\end{Theorem}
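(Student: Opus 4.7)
The plan is to reduce to classical optimal stopping theory applied to the continuous, integrable reward process $R_r \defeq \phi((r\wedge s)\vee t, X^{t,\mathbf{x}})$, $r\in[t,T]$. Continuity of the paths of $R$ follows from $\phi\in\cpol(\Lambda)$, the continuity with respect to $\mathbf{d}_\infty$, and the sample-path continuity of $X^{t,\mathbf{x}}\in\mathcal{H}^p_{\mathcal{P}}(H)$ guaranteed by Theorem~\ref{Exist}; uniform $L^p$-integrability of $R$ on $[t,T]$ for some $p>1$ follows from the polynomial growth of $\phi$ combined with the estimate \eqref{E:EstimateSDE}.

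First I would establish a dynamic programming principle for $\Psi_s$: for every $(t,\mathbf{x})\in\Lambda_s$ and every $\nu\in\mathcal{T}$ with $t\leq\nu\leq s$,
$$
\Psi_s(t,\mathbf{x})\;=\;\sup_{\tau\in\mathcal{T},\,\tau\geq t}\mathbb{E}\Big[\phi(\tau\wedge s, X^{t,\mathbf{x}})\mathbf{1}_{\{\tau<\nu\}}+\Psi_s(\nu, X^{t,\mathbf{x}})\mathbf{1}_{\{\tau\geq\nu\}}\Big].
$$
The inequality ``$\geq$'' is proved by concatenating, for each $\varepsilon$-optimal stopping rule used on the event $\{\tau\geq\nu\}$ starting from $(\nu,X^{t,\mathbf{x}})$, the flow identity \eqref{flow} $X^{t,\mathbf{x}}=X^{\nu,X^{t,\mathbf{x}}}$ on $[\nu,T]$; the reverse inequality is immediate by taking $\tau=\nu$ on the right-hand side and then optimizing after $\nu$ via regular conditional probabilities. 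Measurability of the integrand is ensured by the lower semicontinuity of $\Psi_s$ already proved in the excerpt.

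From the DPP one deduces that the process $Y_r\defeq\Psi_s((r\wedge s)\vee t, X^{t,\mathbf{x}}_{\cdot\wedge r})$ is a supermartingale dominating $R$, and, by standard Snell-envelope theory, it is the smallest such. Since both $Y$ and $R$ are continuous (the continuity of $Y$ follows from the continuity of $r\mapsto X^{t,\mathbf{x}}_{\cdot\wedge r}$ in $(\mathbb{W},|\cdot|_\infty)$ combined with the lower semicontinuity of $\Psi_s$ from above, plus uniform integrability giving the reverse inequality in the limit), the classical result asserts that the first entry time
$$
\tau^\flat\;\defeq\;\inf\big\{r\in[t,s]: Y_r=R_r\big\}
$$
is optimal for \eqref{OS}. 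The identification $Y_r=\Psi_s(r,X^{t,\mathbf{x}})$ for $r\in[t,s]$ then yields
$$
\big\{r\in[t,s]: Y_r=R_r\big\}=\big\{r\in[t,s]:\Psi_s(r,X^{t,\mathbf{x}})=\phi(r,X^{t,\mathbf{x}})\big\}=\big\{r\in[t,s]:(r,X^{t,\mathbf{x}})\notin\mathcal{C}_s\big\},
$$
so that $\tau^\flat=\tau^*_{t,\mathbf{x}}$, proving the claim.

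The main obstacle is the rigorous proof of the DPP in this path-dependent infinite-dimensional setting, since it relies on a measurable version of the flow property \eqref{flow} under conditioning on a general stopping time $\nu$: this is typically carried out through regular conditional distributions on the path space $\mathbb{W}$ or via a pasting argument compatible with the pseudometric $\mathbf{d}_\infty$ and the pathwise formulation of the mild equation \eqref{mild}. A related delicate point is verifying that $Y$ admits a continuous (not merely càdlàg) version, since the optimality of the first entry time $\tau^\flat$ into the closed set $\Lambda_s\setminus\mathcal{C}_s$ requires that $Y_{\tau^\flat}=R_{\tau^\flat}$; this uses continuity of paths of $X^{t,\mathbf{x}}$, openness of $\mathcal{C}_s$ (already observed), and an approximation of $\Psi_s$ by value functions based on discrete stopping rules, which are continuous by Corollary~\ref{corr:2014-10-29:aa}.
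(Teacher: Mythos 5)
Your plan is a direct proof in the path-dependent setting: establish a dynamic programming principle for $\Psi_s$, build the Snell envelope $Y$, argue that it is a continuous supermartingale dominating the reward $R$, and then invoke the classical fact that the first hitting time of $\{Y=R\}$ is optimal, finally identifying that hitting time with $\tau^*_{t,\mathbf{x}}$ via the definition of $\mathcal{C}_s$. The overall architecture would work, but you yourself flag the two decisive steps as unresolved, and those are not cosmetic gaps. A rigorous DPP at a general stopping time $\nu$ in this non-Markovian, infinite-dimensional setting requires a measurable-selection and pasting argument (or a passage through regular conditional distributions on $\mathbb{W}$) that is genuinely delicate and is nowhere contained in the material you cite. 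Likewise, continuity of $Y$ does not follow from the lower semicontinuity of $\Psi_s$ that the paper establishes plus the continuity of $X^{t,\mathbf{x}}$: you would also need upper semicontinuity of the value along the trajectory, and the approximation by discrete stopping rules that you suggest for this is itself a nontrivial argument requiring its own uniformity estimates. Without those two steps, the proposal is a plan, not a proof.

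The paper takes a shortcut that dissolves both difficulties at once. It introduces the \emph{window process} $\mathbb{X}^{t,\mathbf{x}}_r$ with values in $\mathbb{W}$, which at each time $r$ records the stopped path, and observes that this process is Markov; the original path-dependent stopping problem is then literally a classical Markovian optimal stopping problem on the state space $\mathbb{W}$. Once this Markovianization is done, the only hypothesis that needs to be checked is the integrability of the running supremum of the reward, which is the estimate \eqref{est1}, and the conclusion is read off directly from \cite[Ch.\ I.1, Cor.\ 2.9]{PS}, which already packages the DPP, the Snell envelope construction, the required regularity, and the optimality of the first exit from the continuation region. So the key idea your proposal is missing is exactly this Markovianization via the window process: it is what lets the paper bypass, rather than confront, the obstacles you correctly identify but do not close.
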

\begin{proof}
 First of all, we notice that,
since $ \phi \in \cpol(\Lambda)$, by
(O\ref{2016-08-09:04})
we have, for every $ (t,\mathbf{x})\in\Lambda$,
\begin{equation}\label{est1}
\mathbb{E}\left[\sup_{r\in[t,T]}| \phi (r,X^{t,\mathbf{x}})|\right]<+\infty
\end{equation}
Now,
given $(t,\mathbf{x})\in \Lambda$, consider the \emph{window process}
$$[0,T]\times \Omega  \rightarrow  \mathbb W, \ (r,\omega)\mapsto \mathbb{X}^{t,\mathbf{x}}_r(\omega),$$
where
$$
\mbox{for\ }r\in[0,T]\mbox{ and }  s\in[0,T],\qquad
\mathbb{X}^{t,\mathbf{x}}_r(\omega)(s)\defeq
\begin{dcases}
\mathbf{x}_0, & \mbox{if }s+r<T,\\
X^{t,\mathbf{x}}_{s+r-T}(\omega),
&\mbox{if }s+r\geq T.
\end{dcases}
$$
Clearly this process is Markovian and we can write the optimal stopping problem in terms of it. Then, the standard theory of optimal stopping of Markovian processes allows to conclude. More precisely, taking into account \eqref{est1}, we can apply Corollary 2.9, Ch.\ I.1, of \cite{PS}.
\end{proof}

\begin{Lemma}\label{lemma:pp}

Let Assumption \ref{A:SHDE} hold. Let $u,f\in  \cpol(\Lambda)$  and assume that there exist $s\in [0,T]$ and $(t,\mathbf{x})\in\Lambda_s$, with $t<s$, such that
\begin{equation}\label{ass:s}
u(t,\mathbf{x})  >  \mathbb{E}\left[u(s,X^{t,\mathbf{x}})  +\int_t^s f(r,X^{t,\mathbf{x}}) d r\right] \qquad \mbox{(resp.\ $<$)}.
\end{equation}
Then there exists $(a,\mathbf{y})\in \Lambda_s$ such that $\varphi$ defined as $\varphi(s,\mathbf{z})\defeq -\int_{0}^s f(r,\mathbf{z})dr$ belongs to
$\underline{\mathcal{A}}u(a,\mathbf{y})$ (resp.\  belongs to $\overline{\mathcal{A}}u(a,\mathbf{y})$).
\end{Lemma}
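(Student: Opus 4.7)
The plan is to recast the lemma as an optimal-stopping statement. Introduce the auxiliary functional
$$\phi(r,\mathbf z)\;\coloneqq\;u(r,\mathbf z)+\int_0^r f(q,\mathbf z)\,dq,$$
so that $\phi=u-\varphi$. Since $u,f\in\cpol(\Lambda)$ and $T<\infty$, $\phi$ belongs to $\cpol(\Lambda)$, and Definition~\ref{def:smooth_functions} applied with $\alpha=-f$, $\beta=0$ shows that $\varphi\in C^{1,2}_X(\Lambda)$ (cf.\ Remark~\ref{remppp}). In the strict-$>$ case, hypothesis \eqref{ass:s} reads $\phi(t,\mathbf x)>\mathbb E[\phi(s,X^{t,\mathbf x})]$. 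Set up the optimal-stopping problem \eqref{OS} with obstacle $\phi$: by Theorem~\ref{teo:OS}, the first optimal stopping time is $\tau^*_{t,\mathbf x}=\inf\{r\in[t,s]:(r,X^{t,\mathbf x})\notin\mathcal C_s\}$, where $\mathcal C_s=\{\Psi_s>\phi\}$ is open and $S\coloneqq\Lambda_s\setminus\mathcal C_s=\{\Psi_s=\phi\}$ is closed.

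The pivotal observation is that any $(a,\mathbf y)\in S$ with $a<s$ produces the conclusion with the deterministic localising time $\H\coloneqq s$: by the Snell-envelope / supermartingale property of $\Psi_s$ along $X^{a,\mathbf y}$, which is a consequence of the dynamic programming principle underlying the proof of Theorem~\ref{teo:OS},
$$\phi(a,\mathbf y)\;=\;\Psi_s(a,\mathbf y)\;\geq\;\mathbb E\bigl[\Psi_s(\tau\wedge s,X^{a,\mathbf y})\bigr]\;\geq\;\mathbb E\bigl[\phi(\tau\wedge s,X^{a,\mathbf y})\bigr]$$
for every $\tau\in\mathcal T$ with $\tau\geq a$, while $\tau\equiv a$ attains equality. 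Rewriting $\phi=u-\varphi$, this is precisely $(\varphi-u)(a,\mathbf y)=\min_{\tau\geq a}\mathbb E[(\varphi-u)(\tau\wedge\H,X^{a,\mathbf y})]$, i.e.\ $\varphi\in\underline{\mathcal A}u(a,\mathbf y)$. To exhibit such a deterministic pair, I would split into two cases. If $(t,\mathbf x)\in S$, set $(a,\mathbf y)\coloneqq(t,\mathbf x)$ (and $a=t<s$ by assumption). Otherwise $(t,\mathbf x)\in\mathcal C_s$, and the $\mathbb P$-a.s.\ $\mathbf d_\infty$-continuity of $r\mapsto(r,X^{t,\mathbf x}_{\cdot\wedge r})$ together with the openness of $\mathcal C_s$ give $\tau^*_{t,\mathbf x}>t$ a.s.; the optimality identity $\Psi_s(t,\mathbf x)=\mathbb E[\phi(\tau^*\wedge s,X^{t,\mathbf x})]$ combined with $\Psi_s(t,\mathbf x)\geq\phi(t,\mathbf x)>\mathbb E[\phi(s,X^{t,\mathbf x})]$ forbids $\tau^*=s$ almost surely, so $\mathbb P(\tau^*<s)>0$. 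On this positive-probability event the pair $(\tau^*(\omega),X^{t,\mathbf x}_{\cdot\wedge\tau^*(\omega)}(\omega))$ lies in $S$ with first coordinate strictly less than $s$, and picking any such $\omega$ supplies the required deterministic $(a,\mathbf y)$. The supersolution (strict-$<$) case is fully symmetric, on replacing the supremum in \eqref{OS} by an infimum.

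The main technical hurdle is the Snell-envelope inequality at a generic starting pair $(a,\mathbf y)\in\Lambda_s$ rather than only at $(t,\mathbf x)$: this follows from the DPP applied to the Markovianised window process used to prove Theorem~\ref{teo:OS}, together with Corollary~\ref{corr:2014-10-29:aa} ensuring the continuous dependence of the expected payoffs on the initial data. The remaining verifications --- polynomial growth and $\mathbf d_\infty$-continuity of $\phi$, the $C^{1,2}_X$ regularity of $\varphi$, and the extraction of a deterministic witness from a positive-probability event --- are routine.
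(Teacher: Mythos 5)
Your proof is correct and follows the same strategy as the paper: set $\phi = u - \varphi$, recast the hypothesis \eqref{ass:s} as $\phi(t,\mathbf{x}) > \mathbb{E}[\phi(s,X^{t,\mathbf{x}})]$, use Theorem~\ref{teo:OS} to deduce $\mathbb{P}(\tau^*_{t,\mathbf{x}} < s) > 0$, pick a realized $(a,\mathbf{y}) \in \Lambda_s\setminus\mathcal{C}_s$ with $a < s$, and exhibit the test-function property with localizing time $\H = s$. The only genuine divergence is that you route the required inequality $\phi(a,\mathbf{y}) \geq \mathbb{E}[\phi(\tau\wedge s, X^{a,\mathbf{y}})]$ through the Snell-envelope supermartingale property of $\Psi_s$ along $X^{a,\mathbf{y}}$, which, as you note yourself, requires a dynamic programming principle at the new starting point $(a,\mathbf{y})$ and is the ``main technical hurdle'' of your argument. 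But no such machinery is needed: $\Psi_s(a,\mathbf{y})$ is \emph{defined} in \eqref{OS} as $\sup_{\tau\geq a}\mathbb{E}[\phi(\tau\wedge s, X^{a,\mathbf{y}})]$, so the membership $(a,\mathbf{y})\notin\mathcal{C}_s$, i.e.\ $\phi(a,\mathbf{y})=\Psi_s(a,\mathbf{y})$, already gives $\phi(a,\mathbf{y}) \geq \mathbb{E}[\phi(\tau\wedge s, X^{a,\mathbf{y}})]$ for every $\tau\geq a$ directly, with equality at $\tau=a$. This is exactly the paper's observation, and it eliminates the hurdle entirely. Your separate treatment of the case $(t,\mathbf{x})\in S$ is likewise redundant, since there $\tau^*_{t,\mathbf{x}}\equiv t < s$, which the positive-probability argument already covers.
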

\begin{proof}
We prove the claim for the ``sub-part''. The proof of the ``super-part'' is completely symmetric.

First, we notice that  $\varphi\in C^{1,2}_{X}(\Lambda)$, as it satisfies
 \eqref{eq:fct_Ito} with $\alpha=-f$
  and $\beta\equiv 0$.
Let us now focus on the maximum property. Consider the optimal stopping problem \eqref{OS}
with $ \phi (s,\mathbf{y})\coloneqq  u(s,\mathbf{y})+\int_0^sf(r,\mathbf{y})dr$, for $(s,\mathbf{y})\in\Lambda$,
 and let $\tau^*_{t,\mathbf{x}}$ be the stopping time of Theorem~\ref{teo:OS}. Due to  \eqref{ass:s} we have
$\mathbb{P}\{\tau^*_{t,\mathbf{x}}<s\}>0$. This implies that there exists $(a,\mathbf{y})\in\Lambda_s \setminus \mathcal{C}_s$. Hence
\begin{equation*}
  \begin{split}
    -u(a,\mathbf{y})
-\int_0^af(r,\mathbf{y})dr
&=
-  \phi (a,\mathbf{y})
=-\Psi_s(a,\mathbf{y})\\
&=\min_{\tau\in \mathcal{T}, \,\tau\geq a}\mathbb{E}\left[
-  u(\tau\wedge s,X^{a,\mathbf{y}})-\int_0^{\tau\wedge s} f(r,X^{a,\mathbf{y}}) dr \right],
\end{split}
\end{equation*}
and
the claim
is proved (\footnote{The role of the localizing stopping time  $\H$ in the definition of test functions is here played by $s$.}).
\end{proof}
\smallskip
\begin{proof}[\textbf{Proof of Theorem \ref{lemma:pp1}.}]
We prove the claim for the case of the subsolution/submartingale. The other claim can be proved in a completely symmetric way.

\emph{(\ref{2016-06-21:02})$\Rightarrow$ (\ref{2016-08-06:05})}.
We need to prove that, for every pair of times $(s_1,s_2)$ with $t\leq s_1\leq s_2\leq T$,
\begin{equation}\label{ass:s01}
u(s_1,X^{t,\mathbf{x}}) \leq    \mathbb{E}\left[u(s_2,X^{t,\mathbf{x}})  +\int_{s_1}^{s_2} F(r,X^{t,\mathbf{x}}, u(r,X^{t,\mathbf{x}})) d r \big| \mathcal{F}_{s_1}\right].
\end{equation}
 Using  \eqref{flow} and the equality $X^{s_1,X^{t,\mathbf{x}}}=X^{s_1,X^{t,\mathbf{x}}_{\cdot\wedge s_1}}$,  we have (\footnote{The flow property of $X^{t,\mathbf{x}}$ used here plays the role of the method based on regular conditional probability used in \cite{EkrenKellerTouziZhang, EkrenTouziZhang1, EkrenTouziZhang2}.}) 
  \begin{multline*}
   \mathbb{E}\left[u(s_2,X^{t,\mathbf{x}}) + \int_{s_1}^{s_2} F(r,X^{t,\mathbf{x}},u(r,X^{t,\mathbf{x}})) dr \big| \Fc_{s_1}\right] =\\
=   \mathbb{E}\bigg[u(s_2,X^{s_1,X^{t,\mathbf{x}}_{\cdot\wedge s_1}}) + \int_{s_1}^{s_2} F(r,X^{{s_1},X^{t,\mathbf{x}}_{\cdot\wedge s_1}},u(r,X^{{s_1},X^{t,\mathbf{x}}_{\cdot\wedge s_1}})) dr \big| \Fc_{s_1}\bigg].
\end{multline*}
Note  that $X^{s_1,\mathbf{x}'}$ is independent of $\mathcal{F}_{s_1}$ for each $\mathbf{x}'$ and  $X^{t,\mathbf{x}}_{\cdot\wedge {s_1}}$ is  $\mathcal{F}_{s_1}$-measurable.
Hence, using  \cite[Lemma 3.9,\ p.\ 55]{Baldi2000},
\begin{multline*}
   \mathbb{E}\bigg[u(s_2,X^{s_1,X^{t,\mathbf{x}}_{\cdot\wedge s_1}}) + \int_{s_1}^{s_2} F(r,X^{{s_1},X^{t,\mathbf{x}}_{\cdot\wedge s_1}},u(r,X^{{s_1},X^{t,\mathbf{x}}_{\cdot\wedge s_1}})) dr \big| \Fc_{s_1}\bigg]=
\\
=  \mathbb{E}\bigg[u(s_2,X^{s_1,\mathbf{x}'}) + \int_{s_1}^{s_2} F(r,X^{s_1,\mathbf x'},u(r,X^{s_1,\mathbf x'})) dr\bigg]\bigg|_{\mathbf{x}'=X^{t,\mathbf{x}}}
\end{multline*}
Now we conclude,  as \emph{(\ref{2016-06-21:02})} holds.

\emph{(\ref{2016-08-06:05})$\Rightarrow $(\ref{2016-06-21:03})}.
Let $\varphi\in\underline{\mathcal{A}}(t,\mathbf{x})$. Then, by definition of test function, there exists $\H\in \mathcal{T}$, with $\H>t$, such that
\begin{equation}\label{tre}
(\varphi-u)(t,\mathbf{x}) \leq  \mathbb{E}\left[(\varphi-u)\left(\tau\wedge \H,X^{t,\mathbf{x}}\right)\right],\qquad \forall\, \tau\in\mathcal{T},\ \forall \,t\in [0, \tau].
\end{equation}
As $\varphi\in C^{1,2}_{X}(\Lambda)$, we can write
\begin{equation}\label{trew}
\mathbb{E}\left[\varphi(\tau\wedge \H,X^{t,\mathbf{x}})\right] =
 \varphi(t,\mathbf{x})+\mathbb{E}\left[\int_t^{\tau\wedge \H}\mathcal{L}\varphi(s,X^{t,\mathbf{x}})ds
\right]
\end{equation}
Combining \eqref{tre}-\eqref{trew}, we get
$$
-\mathbb{E}\left[\int_t^{\tau\wedge \H}\mathcal{L}\varphi(s,X^{t,\mathbf{x}})ds
\right]\
\leq\
u(t,\mathbf{x})-\mathbb{E}\left[u(\tau\wedge \H,X^{t,\mathbf{x}})\right]
$$
or, equivalently,
\begin{multline}\label{eq:2014-05-08:aa}
-\mathbb{E}\left[\int_t^{\tau\wedge \H}\left(\mathcal{L}\varphi(s,X^{t,\mathbf{x}})
+F(s,X^{t,\mathbf{x}},u(s,X^{t,\mathbf{x}}))
\right)ds
\right]\leq
\\\leq
u(t,\mathbf{x})-\mathbb{E}\left[u(\tau\wedge \H,X^{t,\mathbf{x}})
+\int_t^{\tau\wedge \H}F(s,X^{t,\mathbf{x}},u(s,X^{t,\mathbf{x}}))ds
\right].
\end{multline}
Now observe that the submartingale assumption \eqref{eq:2014-05-08:ab} implies that the right-hand side  of \eqref{eq:2014-05-08:aa} is smaller than 0. Hence,  we can conclude by considering in \eqref{eq:2014-05-08:aa} stopping times of the form $\tau=t+\varepsilon$, with $\varepsilon>0$,  dividing by $\varepsilon$ and letting $\varepsilon\to 0^+$.

\emph{(\ref{2016-06-21:03})$ \Rightarrow $ (\ref{2016-06-21:02})}.
Let $\varepsilon>0$ and consider the function $u_\varepsilon(r,\mathbf{z})\defeq u(r,\mathbf{z})+\varepsilon r$. Assume that there exist $\varepsilon>0$, $(t,\mathbf{x})\in\Lambda$ and $t< s\leq T$ such that
\begin{equation}\label{ass:s1}
u_\varepsilon(t,\mathbf{x}) > \mathbb{E}\left[u_\varepsilon(s,X^{t,\mathbf{x}})+\int_t^s F(r,X^{t,\mathbf{x}},u(r,X^{t,\mathbf{x}})) dr\right].
\end{equation}
By applying Lemma \ref{lemma:pp}, we get that $\varphi^\varepsilon$ defined as    $\varphi^\varepsilon(r,\mathbf{z})\defeq  \varphi(r,\mathbf{z})-\varepsilon r$, where $\varphi$ is defined as in Lemma \ref{lemma:pp} taking $f(r,\cdot)\defeq  F(r,\cdot,u(r,\cdot))$, belongs to $\underline{\mathcal{A}}u (a,\mathbf{y})$ for some $(a,\mathbf{y})$. By the viscosity subsolution property of $u$, we then obtain the contradiction $\varepsilon\leq 0$. Hence we deduce that
\begin{equation}\label{ass:s22}
u_\varepsilon(t,\mathbf{x}) \leq \mathbb{E}\left[u_\varepsilon(s,X^{t,\mathbf{x}}))+\int_t^s F(r,X^{t,\mathbf{x}},u(r,X^{t,\mathbf{x}})) dr\right].
\end{equation}
As $\varepsilon$ is arbitrary in the argument above, we can take $\varepsilon\downarrow 0$ in \eqref{ass:s22}, getting  \eqref{hjn}.
\end{proof}

Theorem \ref{lemma:pp1} has several important consequences that we will investigate in the rest of the section.

\subsection{Comparison principle}
\label{sec:comparison}

In this section we provide a comparison result for viscosity sub and supersolutions of  \eqref{eq:PPDE}, which, through the use of a technical lemma provided here, turns out to be a  corollary of the characterization of Theorem \ref{lemma:pp1}.
\begin{Lemma}\label{lemma:sub}
Let $Z\in \mathcal{H}^1_\mathcal{P}$ and $g\colon[0,T]\times\Omega\times  \mathbb{R}\rightarrow \mathbb{R}$ be such that $g(\cdot,\cdot,z)\in L^1_\mathcal{P}$, for all $z\in\R$, and, for some constant $C_g>0$,
 \begin{equation}\label{ass:g}
 g(\cdot,\cdot,z)  \leq  C_g|z|, \qquad\forall\,z\in\mathbb{R}.
 \end{equation}
Assume that the process
\begin{equation}\label{PRZ}
\left(Z_s+\int_t^s g(r,\cdot,Z_r) dr\right)_{s\in[t,T]}
\end{equation}
is an $(\mathcal{F}_s)_{s\in[t,T]}$-submartingale. Then $Z_T\leq0$, $\P$-a.s., implies  $Z_t\leq0$, $\P$-a.s..
\end{Lemma}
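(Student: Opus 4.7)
My plan is to reduce the statement to a backward Gr\"onwall inequality on $\phi(s)\defeq \mathbb{E}[Z_s^+]$, with the key trick being to localize by a stopping time that keeps $Z$ strictly positive, so that the one-sided assumption $g(\cdot,\cdot,z)\le C_g|z|$ upgrades on that region to $g(r,\cdot,Z_r)\le C_g Z_r^+$.

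More precisely, since $Z\in \mathcal{H}^1_{\mathcal P}$ has a continuous version, and the process $M_s\defeq Z_s+\int_t^s g(r,\cdot,Z_r)dr$ is an $L^1$-submartingale on $[t,T]$, for every $s\in[t,T]$ the random time
\[
\tau_s\defeq \inf\{r\in[s,T]\colon Z_r\le0\}\wedge T
\]
is a stopping time taking values in $[s,T]$. On $\{Z_s>0\}$, continuity of $Z$ together with $Z_T\le 0$ guarantees that $Z_r>0$ for $r\in[s,\tau_s)$ and $Z_{\tau_s}\le 0$. Applying the optional sampling theorem (legitimate because $\tau_s$ is bounded and $M$ is an $L^1$-submartingale), we get
\[
Z_s \le \mathbb{E}\Big[ Z_{\tau_s}+\int_s^{\tau_s} g(r,\cdot,Z_r)\,dr\,\Big|\,\mathcal F_s\Big].
\]
Multiplying by the $\mathcal F_s$-measurable indicator $\mathbf 1_{\{Z_s>0\}}$, discarding the non-positive boundary term $Z_{\tau_s}\mathbf 1_{\{Z_s>0\}}$, and using that on $\{Z_s>0\}$ and for $r\in[s,\tau_s]$ we have $Z_r\ge 0$ and therefore $g(r,\cdot,Z_r)\le C_g Z_r=C_g Z_r^+$, we obtain
\[
Z_s^+ \le C_g\,\mathbb{E}\Big[\int_s^{T} Z_r^+\,dr\,\Big|\,\mathcal F_s\Big].
\]

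Taking unconditional expectation and setting $\phi(s)\defeq \mathbb{E}[Z_s^+]$, this yields the linear integral inequality
\[
\phi(s)\le C_g\int_s^T \phi(r)\,dr,\qquad s\in[t,T].
\]
Since $Z_s^+\le \sup_{r\in[t,T]}|Z_r|\in L^1$ by the assumption $Z\in\mathcal H^1_{\mathcal P}$, and since $s\mapsto Z_s^+$ is pathwise continuous, dominated convergence shows that $\phi$ is continuous. A standard backward Gr\"onwall argument then forces $\phi\equiv 0$ on $[t,T]$: indeed, $\Psi(s)\defeq \int_s^T\phi(r)dr$ satisfies $\Psi'(s)=-\phi(s)\ge -C_g\Psi(s)$ and $\Psi(T)=0$, whence $e^{C_g s}\Psi(s)$ is nondecreasing with terminal value $0$, so $\Psi\le 0$, hence $\Psi=0$ and thus $\phi=0$. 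In particular $\mathbb{E}[Z_t^+]=0$, i.e.\ $Z_t\le 0$ $\mathbb{P}$-a.s.

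The main technical care is at the localization step: one must check that $\tau_s$ is a stopping time (granted by continuity of $Z$), that the optional sampling theorem applies (granted because $\tau_s\le T$ and $M$ is an $L^1$-submartingale, noting that $\int_t^T g(r,\cdot,Z_r)dr\in L^1$ since $M_T\in L^1$ and $Z_T\in L^1$), and above all that the one-sided hypothesis $g\le C_g|z|$ can be converted into $g\le C_g Z_r^+$ \emph{only} on the region $\{Z_r\ge 0\}$ --- which is exactly the region cut out by $[s,\tau_s]\cap\{Z_s>0\}$. This localization is what prevents the $Z_r^-$ contribution from spoiling the Gr\"onwall estimate and is the true heart of the proof.
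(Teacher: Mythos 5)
Your proof is correct and rests on the same key idea as the paper's: localizing at the first hitting time of $\{Z\le 0\}$ so that the one-sided bound $g(\cdot,\cdot,z)\le C_g|z|$ upgrades to $g(r,\cdot,Z_r)\le C_g Z_r$ on the region where $Z$ stays nonnegative, and then closing with a backward Gr\"onwall estimate. The only difference is bookkeeping: the paper fixes a single stopping time $\tau^*=\inf\{s\ge t:Z_s\le 0\}$ and runs Gr\"onwall on the $\mathcal F_t$-conditioned (hence random) function $h(s)=\mathbb E[\mathbf 1_{\{s\le\tau^*\}}Z_s\,|\,\mathcal F_t]$, reading off $Z_t=h(t)\le 0$ directly, whereas you use a family of hitting times $\tau_s$ and the deterministic function $\phi(s)=\mathbb E[Z_s^+]$; your variant has the mild advantage of sidestepping the joint-measurability and $\omega$-wise regularity caveats implicit in applying Gr\"onwall pathwise to $h$.
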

\begin{proof}
Let $Z_T\leq0$ and
define
$$
\tau^*  \defeq  \inf\left\{s\geq t \colon Z_s\leq
0\right\}.
$$
Clearly $t\leq\tau^*\leq T$ and, since $Z$ has continuous trajectories,
\begin{equation}\label{iiuy}
Z_{\tau^*}  \leq  0.
\end{equation}
Using the submartingale property, we obtain
\begin{equation}\label{eq:2014-04-10:aa}
Z_s  \leq  \mathbb{E}\left[Z_{\tau^*\vee s}+\int_s^{\tau^*\vee s} g(r,\cdot,Z_r)dr  \bigg|  \mathcal{F}_s\right], \qquad
\forall\,s\in[t,T].
\end{equation}
Multiplying \eqref{eq:2014-04-10:aa} by the $\mathcal{F}_s$ -measurable random variable $\mathbf{1}_{\{s\leq \tau^*\}}$, and recalling \eqref{iiuy}, we find
\begin{equation}\label{eq:2014-04-10:aa0}
  \begin{split}
    \mathbf{1}_{\{s\leq \tau^*\}} Z_s &\leq  \mathbb{E}\left[\mathbf{1}_{\{s\leq \tau^*\}}\left(Z_{\tau^*}+\int_s^{\tau^*} g(r,\cdot,Z_r)dr\right)  \bigg|  \mathcal{F}_s\right] \\
&\leq
\mathbb{E}\left[\mathbf{1}_{\{s\leq\tau^*\}} \int_s^{\tau^*} g(r,\cdot,Z_r)dr  \bigg|  \mathcal{F}_s\right]\\
&=
\mathbb{E}\left[\int_s^{T} \mathbf{1}_{\{r \leq \tau^*\}} g(r,\cdot,Z_r)dr  \bigg|  \mathcal{F}_s\right],
 \qquad \forall\, s\in[t,T].
\end{split}
\end{equation}
Now from \eqref{ass:g} and the definition of $\tau^*$, we have
$$
 \mathbf{1}_{\{r \leq \tau^*\}} g(r,\cdot,Z_r)  \leq   \mathbf{1}_{\{r \leq\tau^*\}} C_g |Z_r| =  \mathbf{1}_{\{r\leq \tau^*\}} C_g Z_r, \qquad\forall\,r\in[t,T].
$$
Plugging the latter inequality into \eqref{eq:2014-04-10:aa0} and taking the conditional expectations with respect to $\mathcal{F}_t$, we obtain
\begin{equation}\label{oool}
\mathbb{E}\left[\mathbf{1}_{\{s\leq \tau^*\}} Z_s|\mathcal{F}_t\right] \leq  C_g\int_s^{T}  \mathbb{E}[\mathbf{1}_{\{r \leq \tau^*\}}Z_r|\mathcal{F}_t]dr,
 \qquad \forall\,s\in[t,T].
\end{equation}
Now, setting $h(s)\defeq  \mathbb{E}[\mathbf{1}_{\{s\leq \tau^*\}} Z_s|\mathcal{F}_t]$, \eqref{oool} becomes
\begin{equation}\label{pplk1}
h(s)  \leq  C_g\int_s^T h(r) dr, \qquad \forall\,s\in[t,T].
\end{equation}
Gronwall's Lemma yields $h(s)\leq0$, for all $s\in[t,T]$.
In particular, for $s=T$, we obtain, $\P$-a.s., $
Z_t  =  \mathbb{E}[Z_t|\mathcal{F}_t]  =  \mathbb{E}[\mathbf{1}_{\{t \leq \tau^*\}}Z_t|\mathcal{F}_t]  =  h(t)  \leq  0.$
\end{proof}

\begin{Corollary}[Comparison principle]
\label{C:CompPrinciple}
Let Assumptions \ref{A:SHDE} and \ref{A:BSDE} hold.
Let  $u^{(1)}\in \cpol(\Lambda)$ (resp.\ $u^{(2)}\in \cpol(\Lambda)$) be a viscosity subsolution (resp.\ supersolution) to PPDE \eqref{eq:PPDE}.
If $u^{(1)}(T,\cdot)\leq u^{(2)}(T,\cdot)$ on $\mathbb W$, then $u^{(1)}\leq u^{(2)}$ on $\Lambda$.
\end{Corollary}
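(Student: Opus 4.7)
The plan is to combine the martingale characterization of Theorem \ref{lemma:pp1} with the linearization trick supplied by Lemma \ref{lemma:sub}. Fix an arbitrary initial datum $(t,\mathbf{x})\in\Lambda$; it suffices to prove $u^{(1)}(t,\mathbf{x})\leq u^{(2)}(t,\mathbf{x})$.

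First, by Theorem \ref{lemma:pp1}, the viscosity subsolution property of $u^{(1)}$ implies that
\[
M^{(1)}_s \coloneqq u^{(1)}(s,X^{t,\mathbf{x}}) + \int_t^s F(r,X^{t,\mathbf{x}},u^{(1)}(r,X^{t,\mathbf{x}}))\,dr, \quad s\in[t,T],
\]
is an $(\mathcal{F}_s)$-submartingale, and the supersolution property of $u^{(2)}$ yields that the analogous process $M^{(2)}$ is a supermartingale. I would briefly note that these processes are well defined and integrable: indeed $u^{(i)}\in \cpol(\Lambda)$, $X^{t,\mathbf{x}}\in\bigcap_{p\geq 1}\mathcal{H}_{\mathcal{P}}^p(H)$ by Theorem \ref{Exist}, and therefore, by observation (O\ref{2016-08-09:04}), both $u^{(i)}(\cdot,X^{t,\mathbf{x}})$ and $F(\cdot,X^{t,\mathbf{x}},u^{(i)}(\cdot,X^{t,\mathbf{x}}))$ belong to every $\mathcal{H}_{\mathcal{P}}^p$.

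Next, define the difference process $Z_s \coloneqq u^{(1)}(s,X^{t,\mathbf{x}}) - u^{(2)}(s,X^{t,\mathbf{x}})$, which belongs to $\mathcal{H}_{\mathcal{P}}^1$, and the random field
\[
g(r,\omega,z) \coloneqq F\bigl(r,X^{t,\mathbf{x}}(\omega), u^{(2)}(r,X^{t,\mathbf{x}}(\omega)) + z\bigr) - F\bigl(r,X^{t,\mathbf{x}}(\omega), u^{(2)}(r,X^{t,\mathbf{x}}(\omega))\bigr).
\]
By the uniform Lipschitz continuity of $F$ in its last variable (Assumption \ref{A:BSDE}(\ref{2016-10-17:00})), one has $|g(r,\omega,z)|\leq \hat L |z|$, which in particular gives $g(\cdot,\cdot,z)\in L_{\mathcal{P}}^1$ for every $z\in\mathbb{R}$ and the required one-sided estimate \eqref{ass:g} with $C_g = \hat L$. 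By construction,
\[
Z_s + \int_t^s g(r,\cdot,Z_r)\,dr = M^{(1)}_s - M^{(2)}_s, \quad s\in[t,T],
\]
is the difference of a submartingale and a supermartingale, hence itself a submartingale.

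Finally, the terminal condition gives $Z_T = u^{(1)}(T,X^{t,\mathbf{x}}) - u^{(2)}(T,X^{t,\mathbf{x}})\leq 0$, $\mathbb{P}$-a.s. Lemma \ref{lemma:sub} then applies and yields $Z_t \leq 0$, $\mathbb{P}$-a.s. Since $Z_t = u^{(1)}(t,\mathbf{x})-u^{(2)}(t,\mathbf{x})$ is deterministic, we conclude $u^{(1)}(t,\mathbf{x})\leq u^{(2)}(t,\mathbf{x})$, and arbitrariness of $(t,\mathbf{x})$ closes the argument. The only substantive step is the linearization via $g$, which uses crucially the Lipschitz-in-$y$ hypothesis on $F$; everything else is bookkeeping made available by Theorem \ref{lemma:pp1} and the regularity of mild solutions.
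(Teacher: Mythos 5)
Your proof is correct and follows essentially the same route as the paper's: same choice of $g$ and $Z$, same invocation of the martingale characterization from Theorem \ref{lemma:pp1} to supply the submartingale hypothesis of Lemma \ref{lemma:sub}, and the same non-anticipativity observation to conclude $Z_t = u^{(1)}(t,\mathbf{x}) - u^{(2)}(t,\mathbf{x})$. You merely spell out a few steps the paper leaves implicit, such as the algebraic identity $Z_s + \int_t^s g(r,\cdot,Z_r)\,dr = M^{(1)}_s - M^{(2)}_s$ and the integrability check via observation (O\ref{2016-08-09:04}).
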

\begin{proof}
Let $(t,\mathbf{x})\in \Lambda$.
Set
$$
g(r,\omega, z)  \defeq  F(r, X^{t,\mathbf{x}}(\omega), z+u^{(2)}(r,  X^{t,\mathbf{x}}(\omega)))-  F(r, X^{t,\mathbf{x}}(\omega), u^{(2)}(r,  X^{t,\mathbf{x}}(\omega)))
$$
and
$$
Z_r(\omega)  \defeq  
(u^{(1)}
- u^{(2)})
(r,  X^{t,\mathbf{x}}(\omega)).
$$
Due to Assumption \ref{A:BSDE}, the map $g$ satisfies the assumptions of Lemma \ref{lemma:sub}. Moreover, by using the implication 
\emph{(\ref{2016-06-21:03})}$\Rightarrow$\emph{(\ref{2016-08-06:05})}
of  Theorem \ref{lemma:pp1} and the inequality $u^1(T,\cdot)-u^2(T,\cdot)\leq0$, we see that $Z$ satisfies the assumption of  Lemma \ref{lemma:sub}. Then the claim follows as, $\P$-a.e.\ $\omega\in\Omega$,
$$
 (u^{(1)}
- u^{(2)})
(t,  X^{t,\mathbf{x}}(\omega)) =   
(u^{(1)}
- u^{(2)})
(t,  \mathbf{x}).\eqno\qed
$$
\let\qed\relax
\end{proof}

\subsection{Equivalence with mild solutions}\label{sub:mild-visc}

The concept of mild solution has been mainly introduced and used to study infinite-dimensional or functional PDEs.
Roughly speaking, mild solutions  are solutions to integral equations treating the nonlinearity of the PDE as a perturbation of the linear PDE. This concept turns out to be very suitable in the infinite-dimensional framework, as it allows to bypass the complications arising from the unboundedness of the linear term $\langle Ax, \partial_x v\rangle$ in the equation (for details we refer to \cite[Ch.\ 13]{DPZ02} or to \cite[Ch.\ 4]{FGS14} and references therein;  see also \cite{FedericoGozzi16} in the elliptic case).
Moreover such concept is also suitable in the non-Markovian framework, as it allows to bypass the difficulties related to the characterization of the infinitesimal generator of the process (see \cite{FMT}).

The next definition adapts to our context the concept of mild solutions for functional PDEs introduced in \cite{FMT}.

\begin{Definition}
\label{df:mildnew}
A function $u\in  \cpol(\Lambda)$ is  a \emph{mild solution} to \eqref{eq:PPDE} if
\begin{equation}\label{eq:milddefMarkov}
u(t,\mathbf{x}) =  P_{t,T}[u(T,\cdot)](\mathbf{x}) +
\int_t^{T} P_{t,s}\left[{F}(s,\cdot, u(s,\cdot))\right](\mathbf{x})ds, \qquad  \forall\,(t,\mathbf{x})\in \Lambda,
\end{equation}
where
$$
P_{t,s}[\phi](\mathbf{x})\coloneqq \E\left[\phi(X_{\cdot\wedge s}^{t,\mathbf{x}})\right], \ \ \ \forall\,(t,\mathbf{x})\in \Lambda,\ \forall\, s\in[t,T], \ \forall\,\phi\in C(\mathbb{W};\R).$$
\end{Definition}
Notice that we recover the standard definition for the Markovian case when $\phi(\mathbf{x})=\tilde \phi(\mathbf{x}_T)$ for some $\tilde \varphi\in C(H)$.

By definition of $P_{t,s}$ and non-anticipativity of $u,F$, we see that $u\in\cpol(\Lambda)$ is a mild solution to \eqref{eq:PPDE} if and only if
   \begin{equation}\label{eq:milddefbis}
 u(t,\mathbf{x}) =  \E\bigg[u(T,X^{t,\mathbf{x}}) + \int_t^{T} {F}(r,X^{t,\mathbf{x}}, u(r,X^{t,\mathbf{x}}))dr\bigg], \ \ \ \forall\,(t,\mathbf{x})\in \Lambda.
\end{equation}
\begin{Proposition}\label{mildmild}
 $u\in\cpol(\Lambda)$ is a mild solution to \eqref{eq:PPDE} if {and only if} 
\begin{equation}\label{eq:milddef}
 u(t,\mathbf{x}) =  \E\bigg[u(s,X^{t,\mathbf{x}}) + \int_t^{s} {F}(r,X^{t,\mathbf{x}}, u(r,X^{t,\mathbf{x}}))dr\bigg],
\end{equation}
for all $(t,\mathbf{x})\in\Lambda$,  $s\in[t,T]$.
\end{Proposition}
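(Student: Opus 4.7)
The $(\Leftarrow)$ direction is immediate: taking $s=T$ in \eqref{eq:milddef} recovers \eqref{eq:milddefbis}, which, as noted just before the statement, is equivalent to the mild solution condition \eqref{eq:milddefMarkov}. So the content of the proposition lies in the $(\Rightarrow)$ implication, which I would derive from \eqref{eq:milddefbis} by a freezing/conditioning argument that closely parallels the (i)$\Rightarrow$(ii) step in the proof of Theorem \ref{lemma:pp1}.

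Fix $(t,\mathbf{x})\in\Lambda$ and $s\in[t,T]$. The plan is to compute $\mathbb{E}[u(s,X^{t,\mathbf{x}})]$ by applying \eqref{eq:milddefbis} at the (random) point $(s,X^{t,\mathbf{x}}_{\cdot\wedge s})$. More precisely, for each deterministic $\mathbf{y}\in\mathbb{W}$, \eqref{eq:milddefbis} reads
\[
u(s,\mathbf{y}) \,=\, \mathbb{E}\!\left[u(T,X^{s,\mathbf{y}}) + \int_s^{T} F\bigl(r,X^{s,\mathbf{y}},u(r,X^{s,\mathbf{y}})\bigr)\,dr\right].
\]
Since $X^{s,\mathbf{y}}$ is independent of $\mathcal{F}_s$ for each deterministic $\mathbf{y}$ while $X^{t,\mathbf{x}}_{\cdot\wedge s}$ is $\mathcal{F}_s$-measurable, the freezing lemma (Baldi, Lemma 3.9, already invoked in the proof of Theorem \ref{lemma:pp1}) permits substitution of $\mathbf{y}=X^{t,\mathbf{x}}_{\cdot\wedge s}$ to obtain
\[
u(s,X^{t,\mathbf{x}}_{\cdot\wedge s}) \,=\, \mathbb{E}\!\left[u(T,X^{s,X^{t,\mathbf{x}}_{\cdot\wedge s}}) + \int_s^{T} F\bigl(r,X^{s,X^{t,\mathbf{x}}_{\cdot\wedge s}},u(r,X^{s,X^{t,\mathbf{x}}_{\cdot\wedge s}})\bigr)\,dr \,\Big|\, \mathcal{F}_s\right].
\]

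Next, I would use non-anticipativity to rewrite $u(s,X^{t,\mathbf{x}}_{\cdot\wedge s})=u(s,X^{t,\mathbf{x}})$, and the flow property \eqref{flow} (together with $X^{s,X^{t,\mathbf{x}}_{\cdot\wedge s}}=X^{s,X^{t,\mathbf{x}}}$) to identify $X^{s,X^{t,\mathbf{x}}_{\cdot\wedge s}}$ with $X^{t,\mathbf{x}}$ in $\mathcal{H}^p_{\mathcal{P}}(H)$ on $[s,T]$. This gives
\[
u(s,X^{t,\mathbf{x}}) \,=\, \mathbb{E}\!\left[u(T,X^{t,\mathbf{x}}) + \int_s^{T} F\bigl(r,X^{t,\mathbf{x}},u(r,X^{t,\mathbf{x}})\bigr)\,dr \,\Big|\, \mathcal{F}_s\right].
\]
Taking unconditional expectations and subtracting the resulting identity from \eqref{eq:milddefbis} cancels the terminal term and the tail integral on $[s,T]$, leaving precisely \eqref{eq:milddef}.

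Integrability at every step is guaranteed by observation (O\ref{2016-08-09:04}), the polynomial growth assumption on $F$ in Assumption \ref{A:BSDE}(\ref{2016-08-06:04}), the fact that $u\in \cpol(\Lambda)$, and the a~priori estimate \eqref{E:EstimateSDE}, which together put all the integrands in every relevant $L^1$. I do not anticipate a real obstacle: the only technical point is making the substitution $\mathbf{y}\mapsto X^{t,\mathbf{x}}_{\cdot\wedge s}$ rigorous via the freezing lemma, but this is exactly the same manipulation carried out earlier in the implication $(i)\Rightarrow(ii)$ of Theorem \ref{lemma:pp1}, so it may be kept brief.
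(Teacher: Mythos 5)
Your proposal is correct and follows essentially the same route as the paper: both derive \eqref{eq:milddef} from \eqref{eq:milddefbis} by applying \eqref{eq:milddefbis} at the frozen point $(s,\mathbf{y})$, invoking the flow property \eqref{flow} together with Baldi's freezing lemma to express $u(s,X^{t,\mathbf{x}})$ as a conditional expectation given $\mathcal{F}_s$, taking unconditional expectations, and cancelling the terminal and tail-integral terms against \eqref{eq:milddefbis}. The $(\Leftarrow)$ direction via $s=T$ and the integrability justification via $\cpol$ growth and the a~priori estimate likewise match the paper's treatment.
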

\begin{proof}
 ($\Leftarrow$)  This is immediate by definition of $P_{t,\cdot}$ by taking $s=T$ in \eqref{eq:milddef}.

   ($\Rightarrow$)  Let $u$ be a mild solution to \eqref{eq:PPDE}.
Then
\eqref{eq:milddefbis} holds.
Using \eqref{flow} and  \cite[Lemma 3.9, p.\ 55]{Baldi2000} we get for every $s\in[t,T]$
\begin{equation*}
  \begin{split}
      {u}(s,X^{t,\mathbf{x}})
&  =\mathbb{E}\left[\xi(X^{s,\mathbf{y}})+\int_s^{T}F(r,X^{s,\mathbf{y}},{u}(r,X^{s,\mathbf{y}}))dr\right]_{|\mathbf{y}=X^{t,\mathbf{x}}}\\
&  =\mathbb{E}\left[\left.\zeta(X^{s,X^{t,\mathbf{x}}})+\int_s^T F(r, X^{s,X^{t,\mathbf{x}}}
      ,{u}(r, X^{s,X^{t,\mathbf{x}}}
      ))dr \right| \mathcal{F}_s\right]
\\
 & =\mathbb{E}\left[\left.\xi(X^{t,\mathbf{x}})+\int_s^T F(r, X^{t,\mathbf{x}}
      ,{u}(r, X^{t,\mathbf{x}}
      ))dr \right| \mathcal{F}_s\right].
\end{split}
\end{equation*}
Hence
\begin{equation}\label{mmmnn}
\mathbb{E}\left[  {u}(s,X^{t,\mathbf{x}})\right]\
=\
\mathbb{E}\left[\xi(X^{t,\mathbf{x}})+\int_s^T F(r, X^{t,\mathbf{x}}
      ,{u}(r, X^{t,\mathbf{x}}
      ))dr\right].
\end{equation}
We conclude plugging \eqref{mmmnn} into  \eqref{eq:milddefbis}.
\end{proof}

\begin{Corollary}\label{visc-mild}
$u\in \cpol(\Lambda)$ is a viscosity solution to \eqref{eq:PPDE} 
if and only if it is a mild solution to \eqref{eq:PPDE}.
\end{Corollary}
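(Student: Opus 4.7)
The plan is to combine the sub/supermartingale characterization of Theorem \ref{lemma:pp1} with the integral reformulation of mild solutions given in Proposition \ref{mildmild}. The key observation is that being a viscosity solution means being simultaneously a subsolution and a supersolution, which by Theorem \ref{lemma:pp1} is equivalent to condition \eqref{hjn} holding with $\leq$ \emph{and} with $\geq$, i.e.\ with equality. This equality is precisely the integral identity \eqref{eq:milddef} characterizing mild solutions.

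More concretely, first I would prove the implication ``viscosity $\Rightarrow$ mild''. Assume $u\in\cpol(\Lambda)$ is a viscosity solution of \eqref{eq:PPDE}. Applying the equivalence \emph{(\ref{2016-06-21:03})$\Leftrightarrow$(\ref{2016-06-21:02})} of Theorem \ref{lemma:pp1} twice (once for the subsolution property, once for the supersolution property), we obtain, for every $(t,\mathbf{x})\in\Lambda$ and every $s\in[t,T]$,
\[
u(t,\mathbf{x}) \;\leq\; \mathbb{E}\!\left[u(s,X^{t,\mathbf{x}}) + \int_t^s F(r,X^{t,\mathbf{x}},u(r,X^{t,\mathbf{x}}))\,dr\right] \;\leq\; u(t,\mathbf{x}),
\]
so equality holds throughout. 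This is exactly identity \eqref{eq:milddef}, and Proposition \ref{mildmild} then yields that $u$ is a mild solution of \eqref{eq:PPDE}.

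For the converse direction, assume $u\in\cpol(\Lambda)$ is a mild solution. By Proposition \ref{mildmild}, identity \eqref{eq:milddef} holds for every $(t,\mathbf{x})\in\Lambda$ and every $s\in[t,T]$. In particular both inequalities \eqref{hjn} (the ``$\leq$'' one and its ``$\geq$'' counterpart for the supersolution characterization) are satisfied. Applying now the implication \emph{(\ref{2016-06-21:02})$\Rightarrow$(\ref{2016-06-21:03})} of Theorem \ref{lemma:pp1} separately to each inequality, $u$ is simultaneously a viscosity subsolution and a viscosity supersolution of \eqref{eq:PPDE}, hence a viscosity solution.

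This is essentially a one-line corollary: no new analytic difficulty appears, since both Theorem \ref{lemma:pp1} and Proposition \ref{mildmild} have already been established. The only point to double-check is that Assumption \ref{A:BSDE} (or just its growth part, Assumption \ref{A:BSDE}(\ref{2016-08-06:04})) is in force so that Theorem \ref{lemma:pp1} applies and so that the integral on the right-hand side of \eqref{eq:milddef} is well-defined for $u\in\cpol(\Lambda)$; this follows from observation (O\ref{2016-08-09:04}) together with the polynomial growth of $F$ in \eqref{A:BSDE_F}, which ensures that $F(\cdot,X^{t,\mathbf{x}},u(\cdot,X^{t,\mathbf{x}}))\in L^1_{\mathcal{P}}$. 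No further obstacle is expected.
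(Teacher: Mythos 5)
Your proof is correct and follows exactly the same route as the paper: the one-line argument there is precisely "combine the equivalence (i)$\Leftrightarrow$(iii) of Theorem \ref{lemma:pp1} with Proposition \ref{mildmild}," which you have simply unfolded, applying the sub- and super-solution characterizations separately to obtain the two-sided inequality (hence equality) in \eqref{eq:milddef}. Your closing remark that Assumption \ref{A:BSDE}(\ref{2016-08-06:04}) must be in force so that Theorem \ref{lemma:pp1} applies is a reasonable observation, though the paper treats it as implicit in that subsection.
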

\begin{proof} It follows from the equivalence
\emph{(\ref{2016-06-21:02})$\Leftrightarrow$(\ref{2016-06-21:03})}
in Theorem \ref{lemma:pp1} and from Proposition \ref{mildmild}. \end{proof}

Although the two concepts of solution (\emph{viscosity} and \emph{mild}) turn out to coincide in the case of PPDE~\eqref{eq:PPDE}\ (\,\footnote{Actually, even the  proof of existence of viscosity solutions that we provide in Theorem \ref{teo:main} is based on a fixed point argument, typical when dealing with mild solutions.})  and, possibly, in some other cases involving gradient nonlinearities with \emph{structure condition} (see  Subsection \ref{SubS:Punctual}), we emphasize that our concept of viscosity solution is, to many extents, genuinely different from the concept of mild solution. The latter has a global  nature,  the former has a local nature. The local nature of our notion turns out to be fundamental to address general possibly degenerate and fully nonlinear equations of Hamilton-Jacobi-Bellman type by standard viscosity methods based on Dynamic Programming. This is the way we proceed in the next Section  \ref{S:Extension} to address this type of equations and prove existence by ``local'' arguments.

\subsection{Existence, uniqueness and stability of viscosity solutions}
\label{sec:exun}

In this section we provide
existence and uniqueness of viscosity solutions (hence, by Subsection \ref{sub:mild-visc}, also of mild solutions).

\smallskip
For $p\geq 0$, we denote by $C_p(\mathbb{W})$ the vector space of continuous functions $\xi\colon \mathbb{W}\rightarrow \mathbb{R}$ such that
\begin{equation}
\label{A:BSDE_xi}
  |\xi|_{C_p(\mathbb{W})} 
\coloneqq
\sup_{\mathbf{x}\in \mathbb{W}}
\frac{|\xi(\mathbf{w})|}
{
1+|\mathbf{x}|_\infty^p
}
<\infty.
\end{equation}

\begin{Proposition}\label{propp:2014-05-09:aa}
 Let Assumption \ref{A:SHDE} hold,
let $\xi\in C_p(\mathbb{W})$, for some $p\geq 0$, and let
 Assumption~\ref{A:BSDE}
hold with
 the same  $p$.
Then there exists a unique $\hat u\in C_p(\Lambda)$ viscosity solution to \eqref{eq:milddef} with terminal condition \eqref{terminal}.
\end{Proposition}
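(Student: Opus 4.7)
The plan is to exploit the equivalence between viscosity and mild solutions established in Corollary~\ref{visc-mild} and to prove existence and uniqueness by a Banach fixed-point argument on the mild-solution operator. Concretely, define
$$
\Gamma u(t,\mathbf{x}) \coloneqq \mathbb{E}\left[\xi(X^{t,\mathbf{x}}) + \int_t^T F(r, X^{t,\mathbf{x}}, u(r, X^{t,\mathbf{x}}))\,dr\right], \qquad u \in C_p(\Lambda),
$$
so that, by \eqref{eq:milddefbis}, the fixed points of $\Gamma$ in $C_p(\Lambda)$ are exactly the mild solutions of \eqref{eq:PPDE}--\eqref{terminal}, and hence, by Corollary~\ref{visc-mild}, the viscosity solutions we seek. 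The uniqueness part of the statement could in principle also be obtained directly from the comparison principle (Corollary~\ref{C:CompPrinciple}) applied to two solutions with equal terminal data, but the fixed-point approach delivers both existence and uniqueness at once.

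The first task is to verify that $\Gamma$ maps $C_p(\Lambda)$ into itself. Polynomial growth of order $p$ follows immediately from the growth bound \eqref{A:BSDE_F} on $F$, from $\xi \in C_p(\mathbb W)$, and from the moment estimate \eqref{E:EstimateSDE} in Theorem~\ref{Exist} applied with an auxiliary exponent $q \ge \max\{p, p^*+1\}$ in order to ensure $q > p^*$. The main technical obstacle is the continuity of $\Gamma u$ with respect to $\mathbf d_\infty$: the terminal piece $(t,\mathbf{x}) \mapsto \mathbb{E}[\xi(X^{t,\mathbf{x}})]$ is handled by Corollary~\ref{corr:2014-10-29:aa} applied to the deterministic, time-independent functional $\Psi(r,\mathbf{y}) \coloneqq \xi(\mathbf{y})$ at $s=T$, while for the running piece one applies Corollary~\ref{corr:2014-10-29:aa} pointwise in $r$ to $\Psi(r,\mathbf{y}) \coloneqq F(r,\mathbf{y},u(r,\mathbf{y}))$ (which lies in $\cpol(\Lambda)$ since $F$ and $u$ are continuous with polynomial growth) and then passes the limit inside the $r$-integral via dominated convergence, the dominating function being the uniform polynomial bound supplied by \eqref{E:EstimateSDE}.

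Contraction is obtained with a Bielecki-type weighted norm: on $C_p(\Lambda)$ introduce
$$
\|u\|_\lambda \coloneqq \sup_{(t,\mathbf{x}) \in \Lambda} e^{\lambda t}\,\frac{|u(t,\mathbf{x})|}{1+|\mathbf{x}|_\infty^p}, \qquad \lambda > 0,
$$
which is equivalent to $|\cdot|_{C_p(\Lambda)}$. Combining the Lipschitz condition \eqref{F:lip} on $F$ with the estimate $\mathbb{E}[|X^{t,\mathbf{x}}|_\infty^p] \le C(1+|\mathbf{x}|_\infty^p)$ coming from \eqref{E:EstimateSDE}, one gets
$$
|\Gamma u_1 - \Gamma u_2|(t,\mathbf{x}) \le \hat L\,\|u_1-u_2\|_\lambda \int_t^T e^{-\lambda r}\,\mathbb{E}\bigl[1+|X^{t,\mathbf{x}}|_\infty^p\bigr]\,dr \le \frac{\hat L C}{\lambda}\,e^{-\lambda t}(1+|\mathbf{x}|_\infty^p)\,\|u_1-u_2\|_\lambda,
$$
so that $\|\Gamma u_1 - \Gamma u_2\|_\lambda \le (\hat L C/\lambda)\,\|u_1-u_2\|_\lambda$, a strict contraction as soon as $\lambda > \hat L C$. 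The Banach fixed-point theorem then yields a unique $\hat u \in C_p(\Lambda)$ with $\Gamma \hat u = \hat u$; by Corollary~\ref{visc-mild} this $\hat u$ is the unique viscosity solution of \eqref{eq:PPDE} with terminal condition \eqref{terminal}, proving the claim.
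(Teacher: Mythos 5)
Your proof is correct, and it takes a genuinely different route from the paper's. Both arguments reduce the statement to a Banach fixed-point problem for the mild-solution operator $\Gamma$, and both rely on Corollary~\ref{corr:2014-10-29:aa} together with the moment estimate \eqref{E:EstimateSDE} (applied with an auxiliary exponent $q>p^*$, $q\geq p$) to check that $\Gamma$ maps $C_p(\Lambda)$ into itself; they differ in how they force contraction. The paper works with the plain $|\cdot|_{C_p(\Lambda)}$ norm and introduces a time cut-off $\mathbf{1}_{[a,b]}(t)$ in the definition of $\Gamma$, getting a contraction only on a short window $[a,b]$ with $b-a<\varepsilon$ (Step~I); it then proves a flow-type identity for the local fixed point (Step~II) and runs a backward recursion over $n=T/(b-a)$ windows to assemble the global solution, with uniqueness obtained by a localization-at-$T^*$ argument (Step~III). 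You instead introduce a Bielecki weight $e^{\lambda t}$ and work with the equivalent norm $\|u\|_\lambda=\sup_{(t,\mathbf{x})}e^{\lambda t}|u(t,\mathbf{x})|/(1+|\mathbf{x}|_\infty^p)$, under which $\Gamma$ is a strict contraction on the whole time horizon as soon as $\lambda>\hat L C$, so a single application of the Banach fixed-point theorem yields existence and uniqueness; you then pass to viscosity solutions via Corollary~\ref{visc-mild}, implicitly delegating the paper's Step~II to Proposition~\ref{mildmild}. Both techniques are standard and complete: yours is shorter and avoids the piecewise-patching bookkeeping, at the modest cost of introducing and justifying the weighted norm, whereas the paper's version keeps the natural norm throughout and, as a side product, makes the flow property of the fixed point explicit.
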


\begin{proof}
 \emph{Step I.}
Fix a function $\zeta\in C_p(\Lambda)$, and let $0\leq a\leq b\leq T$.
Consider the nonlinear operator $\Gamma\colon C_p(\Lambda)\to C_p(\Lambda), \ u\mapsto \Gamma (u)$,
defined by
\begin{equation}
  \label{eq:2014-05-15:aa}
\Gamma(u)(t,\mathbf{x})\defeq
\mathbb{E}\left[\zeta(X^{t,\mathbf{x}})+\mathbf{1}_{[a,b]}(t)\int_{t}^{b} F(s,X^{t,\mathbf{x}},
  u(s,X^{t,\mathbf{x}}))ds\right], \quad\forall\,(t,\mathbf{x})\in\Lambda.
\end{equation}
First we note that actually $\Gamma$ is well defined and maps $C_p(\Lambda)$ into itself:  it follows from Assumption \ref{A:BSDE} and Corollary \ref{corr:2014-10-29:aa}.

We now show that there exists $\varepsilon>0$ such that, if $b-a<\varepsilon$, then $\Gamma$ is a contraction on $C_p(\Lambda)$, hence admits  a unique fixed point.
Let $u,v\in C_p(\Lambda)$. Using Assumption \ref{A:BSDE}\emph{(\ref{2016-10-17:00})}
\begin{equation*}
  \begin{split}
      |\Gamma(u)(t,&\mathbf{x})-\Gamma(v)(t,\mathbf{x})|\leq\\
&\leq
  \mathbb{E}\left[\mathbf{1}_{[a,b]}(t)\int_{t}^b\left | F(s,X^{t,\mathbf{x}},
      u(s,X^{t,\mathbf{x}})) -F(s,X^{t,\mathbf{x}},
      v(s,X^{t,\mathbf{x}})\right|
    ds\right]\\
  &\leq
  \hat L\mathbb{E}\left[
\mathbf{1}_{[a,b]}(t)\int_{t}^b
|u(s,X^{t,\mathbf{x}})-v(s,X^{t,\mathbf{x}})|ds\right]\\
 & \leq  \hat L| u-v|_{C_p(\Lambda)}\mathbb{E}\left[
\mathbf{1}_{[a,b]}(t)\int_{t}^b
\left(1+| X^{t,\mathbf{x}}|_{\infty}^p\right)d s\right]\\
 & \leq  \hat L| u-v|_{C_p(\Lambda)}
\mathbf{1}_{[a,b]}(t)\int_{t}^b
\left(1+M(1+|
      \mathbf{x}|_{\infty}^p)\right)d s\\
      &\leq \varepsilon \hat L
  \left(1+M)(1+| \mathbf{x}|_{\infty}^p\right)| u-v|_{C_p(\Lambda)}
\end{split}
\end{equation*}
which yields
\begin{equation}\label{eq:2014-11-27:aa}
| \Gamma(u)-\Gamma(v)|_{C_p(\Lambda)} \leq  \varepsilon \hat L(1+M)| u-v|_{C_p(\Lambda)}.
\end{equation}
Thus, $\Gamma$ is a contraction whenever $\varepsilon<(\hat L(1+M))^{-1}$. For such $\varepsilon$, it admits a unique fixed point $\hat u$:
\begin{equation}
  \label{eq:2014-05-15:aa1}
\hat{u}(t,\mathbf{x}) =\
\mathbb{E}\left[\zeta(X^{t,\mathbf{x}})+\mathbf{1}_{[a,b]}(t)\int_{t}^{b} F(s,X^{t,\mathbf{x}},
  \hat{u}(s,X^{t,\mathbf{x}}))ds\right],  \quad \forall\,(t,\mathbf{x})\in\Lambda.
\end{equation}
\vskip0.2cm\noindent \emph{Step II.} We prove that, if a function $\hat{u}$ satisfies  \eqref{eq:2014-05-15:aa1} for $(t,\mathbf{x})\in \Lambda$, $a\leq t\leq b$, then it  also satisfies,  for every $(t,\mathbf{x})\in\Lambda$ and every  $(s,\mathbf{x})\in\Lambda$ with  $a\leq t\leq s\leq b$, the equality
\begin{equation}
  \label{eq:2014-05-15:ab}
\hat{u}(t,\mathbf{x})=
\mathbb{E}\left[\hat{u}(s,X^{t,\mathbf{x}})+\int_{t}^{s} F(r,X^{t,\mathbf{x}},
  \hat{u}(r,X^{t,\mathbf{x}}))dr\right].
\end{equation}
Indeed, using \eqref{flow} and  \cite[Lemma 3.9, p.\ 55]{Baldi2000}
\begin{equation*}
  \begin{split}
      \hat{u}(s,X^{t,\mathbf{x}})
&  =\mathbb{E}\left[\zeta(X^{s,\mathbf{y}})+\int_s^{b}F(r,X^{s,\mathbf{y}},\hat{u}(r,X^{s,\mathbf{y}}))dr\right]_{|\mathbf{y}=X^{t,\mathbf{x}}}\\
&  =\mathbb{E}\left[\left.\zeta(X^{s,X^{t,\mathbf{x}}})+\int_s^b F(r, X^{s,X^{t,\mathbf{x}}}
      ,\hat{u}(r, X^{s,X^{t,\mathbf{x}}}
      ))dr \right| \mathcal{F}_s\right]
\\
 & =\mathbb{E}\left[\left.\zeta(X^{t,\mathbf{x}})+\int_s^b F(r, X^{t,\mathbf{x}}
      ,\hat{u}(r, X^{t,\mathbf{x}}
      ))dr \right| \mathcal{F}_s\right].
\end{split}
\end{equation*}
Hence
$$
\mathbb{E}\left[  \hat{u}(s,X^{t,\mathbf{x}})\right]\
=\
\mathbb{E}\left[\zeta(X^{t,\mathbf{x}})+\int_s^b F(r, X^{t,\mathbf{x}}
      ,\hat{u}(r, X^{t,\mathbf{x}}
      ))dr\right]
$$
and we conclude by  \eqref{eq:2014-05-15:aa1}.
\vskip0.2cm\noindent\emph{Step III.}
In this step we conclude the proof. Let $a,b$ as in \emph{Step I} and  let us assume, without loss of generality, that $T/(b-a)=n\in\mathbb{N}$. By \emph{Step I}, there exists a unique $\hat u_n\in C_p(\Lambda)$ satisfying
\begin{equation*}
\hat u_n(t,\mathbf{x}) \defeq
\mathbb{E}\left[\xi(X^{t,\mathbf{x}})+\mathbf{1}_{[T-(b-a),T]}(t)\int_{t}^{T} F(s,X^{t,\mathbf{x}},
  \hat u_n(s,X^{t,\mathbf{x}}))ds\right],
\end{equation*}
for all $(t,\mathbf{x})\in\Lambda$.
With a backward recursion argument,  using \emph{Step I}, we can find (uniquely determined) functions $\hat u_i\in C_p(\Lambda)$, $i=1,\ldots, n$, such that
\begin{equation*}
  \begin{split}
    \hat u_{i-1}(t,\mathbf{x})\defeq
\mathbb{E}&\left[\hat{u}_{i}(i(b-a),X^{t,\mathbf{x}})\phantom{\int_t^{i(b-a)}}\right.\\
&\left.\phantom{\int}+\mathbf{1}_{[(i-1)(b-a),i(b-a)]}(t)\int_{t}^{i(b-a)} F(s,X^{t,\mathbf{x}},
  \hat u_i(s,X^{t,\mathbf{x}}))ds\right],
\end{split}
\end{equation*}
for all $(t,\mathbf{x})\in\Lambda$. Now define $\hat u(t,\cdot)=\sum_{1\leq i\leq n}
\mathbf{1}_{[(i-1)(b-a),i(b-a))}(t)
\hat u_i(t,\cdot)+\mathbf{1}_{\{T\}}(t)\xi(\cdot)$.
To conclude the existence, we use recursively \emph{Step II} to prove that $\hat u$ satisfies \eqref{eq:milddef} with terminal condition \eqref{terminal}.

Uniqueness follows from local uniqueness. Indeed, let $\hat u, \hat v$ be two solutions in $C_p(\Lambda)$ of \eqref{eq:milddef}-\eqref{terminal} and  define
$$
T^*\defeq\sup\left\{t\in[0,T]\colon \sup_{\mathbf{x}\in \mathbb W} |\hat u(t,\mathbf{x})-\hat v(t,\mathbf{x})|>0\right\},
$$
with the convention $\sup\emptyset =0$. By  continuity of $\hat u, \hat v$, and since $\hat u(T,\cdot)=\hat v(T,\cdot)$, we have $\hat u(t,\cdot)\equiv \hat v(t,\cdot)$ for every $t\in[T^*,T]$. 
In order to prove that
 $T^*=0$,
we assume to the contrary that $T^*>0$.
As done in \emph{Step II}, one can prove that both $\hat u $ and $\hat v$ satisfy \eqref{eq:2014-05-15:ab}.
In particular, if we consider the definition \eqref{eq:2014-05-15:aa}  with $\zeta(\cdot)=\hat{u}(T^*,\cdot)=\hat{v}(T^*,\cdot)$, $a=0\vee(T^*-\varepsilon)$, $b=T^*$, where $\varepsilon<(\hat L(1+M))^{-1}$, we have
$$
\Gamma(\hat u)(t,\mathbf{x})=\hat u(t,\mathbf{x}) \ \
\mbox{and}\ \
\Gamma(\hat v)(t,\mathbf{x})=\hat
v(t,\mathbf{x}), \quad \forall\,(t,\mathbf{x})\in\Lambda,\ \forall\, t\in [T^*-\varepsilon
T^*].
$$
Then,  recalling  \eqref{eq:2014-11-27:aa}, we get a contradiction and conclude.
\end{proof}
\begin{Theorem}\label{teo:main}
 Let Assumption \ref{A:SHDE} hold,
let $\xi\in C_p(\mathbb{W})$, for some $p\geq 0$, and let
 Assumption~\ref{A:BSDE}
hold.
  Then  PPDE~\eqref{eq:PPDE} has a unique viscosity solution in the space $\cpol(\Lambda)$  satisfying the terminal condition \eqref{terminal}.
Moreover such solution belongs to the space $C_p(\Lambda)$, where $p$ is such that both \eqref{A:BSDE_F} and \eqref{A:BSDE_xi} hold.
\end{Theorem}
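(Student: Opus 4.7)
The plan is to assemble the theorem from three already-established pieces: the mild-solution fixed-point construction in Proposition~\ref{propp:2014-05-09:aa}, the equivalence between viscosity and mild solutions in Corollary~\ref{visc-mild}, and the comparison principle in Corollary~\ref{C:CompPrinciple}. Since the hard analytic work (the martingale characterization of Theorem~\ref{lemma:pp1}, and the Banach fixed-point argument producing a $C_p(\Lambda)$ mild solution) has already been done, what remains is essentially a bookkeeping argument.

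First, to prove existence, I would invoke Proposition~\ref{propp:2014-05-09:aa} directly: under Assumption~\ref{A:SHDE}, with $\xi \in C_p(\mathbb{W})$ and Assumption~\ref{A:BSDE} in force (for the same $p$ at which \eqref{A:BSDE_F} and \eqref{A:BSDE_xi} hold), this proposition produces a unique $\hat u \in C_p(\Lambda)$ satisfying the mild-solution identity \eqref{eq:milddef} together with the terminal condition \eqref{terminal}. By Corollary~\ref{visc-mild}, every $\cpol(\Lambda)$ mild solution is a viscosity solution and vice versa; hence $\hat u$ is a viscosity solution of PPDE~\eqref{eq:PPDE} satisfying \eqref{terminal}, and it belongs to $C_p(\Lambda) \subset \cpol(\Lambda)$. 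This already delivers both existence and the growth part of the conclusion.

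Second, for uniqueness in the (larger) class $\cpol(\Lambda)$, I would apply the comparison principle of Corollary~\ref{C:CompPrinciple}. Suppose $u^{(1)}, u^{(2)} \in \cpol(\Lambda)$ are two viscosity solutions of \eqref{eq:PPDE} with $u^{(1)}(T,\cdot) = u^{(2)}(T,\cdot) = \xi$ on $\mathbb{W}$. Then $u^{(1)}$ is in particular a viscosity subsolution and $u^{(2)}$ a viscosity supersolution, so Corollary~\ref{C:CompPrinciple} yields $u^{(1)} \leq u^{(2)}$ on $\Lambda$. Exchanging the roles of $u^{(1)}$ and $u^{(2)}$ gives the reverse inequality, and hence $u^{(1)} \equiv u^{(2)}$. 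Applied to the solution $\hat u$ built in the first step, this shows that any $\cpol(\Lambda)$ viscosity solution with terminal datum $\xi$ must coincide with $\hat u$, and in particular belongs to $C_p(\Lambda)$.

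There is no real obstacle beyond verifying that the hypotheses of the cited results are precisely those of Theorem~\ref{teo:main}: Proposition~\ref{propp:2014-05-09:aa} requires Assumptions \ref{A:SHDE} and \ref{A:BSDE} and a terminal datum in $C_p(\mathbb{W})$, while Corollary~\ref{C:CompPrinciple} requires the same assumptions and is stated for $\cpol(\Lambda)$-sub/supersolutions; both are supplied by our hypotheses. The only subtle point worth emphasizing in the write-up is that uniqueness holds in the a priori larger class $\cpol(\Lambda)$, not merely in $C_p(\Lambda)$ --- this is the natural strength delivered by the comparison principle and is what makes the statement ``unique viscosity solution in $\cpol(\Lambda)$, automatically in $C_p(\Lambda)$'' meaningful.
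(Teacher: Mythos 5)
Your proof is correct and follows essentially the same route as the paper's: existence is obtained from the fixed-point construction in Proposition~\ref{propp:2014-05-09:aa} combined with the equivalence of mild and viscosity solutions (Corollary~\ref{visc-mild}, itself a consequence of Theorem~\ref{lemma:pp1}), while uniqueness in $\cpol(\Lambda)$ follows from the comparison principle of Corollary~\ref{C:CompPrinciple}. The paper's write-up is terser but invokes exactly the same ingredients.
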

\begin{proof}
Uniqueness is consequence of the comparison principle (Corollary \ref{C:CompPrinciple}).
Existence (and uniqueness) in $C_p(\Lambda)$ is consequence of Theorem \ref{lemma:pp1} and Proposition \ref{propp:2014-05-09:aa}.
\end{proof}

\begin{Remark} \label{Urem:UC}
If there exists a modulus of continuity $w_F$ such that
$$
|F(t,\mathbf{x},y)-F(t',\mathbf{x}',y')|  \leq  w_F(\mathbf{d}_\infty((t,\mathbf{x}), (t',\mathbf{x}')))+\hat L |y-y'|,
$$
then $\Gamma$ defined in \eqref{eq:2014-05-15:aa} maps $UC(\Lambda)$ into itself. Hence, if $\xi$ is uniformly continuous  and the condition above on $F$ holds, then the solution $\hat{u}$ belongs to $UC(\Lambda)$.
\end{Remark}

\begin{Remark}[Nonlinear Feynman-Kac formula]\label{rem:back}
Existence and uniqueness of solutions to the functional equation \eqref{eq:milddef} could be deduced from the theory of backward stochastic differential equations in Hilbert spaces. Indeed, another way to solve the functional equation \eqref{eq:milddef} is to consider the following backward stochastic differential equation
\begin{equation}
\label{BSDE}
Y_s  =  \xi(X^{t,\mathbf{x}}) + \int_s^T F(r,X^{t,\mathbf{x}},Y_r) dr - \int_s^T Z_r dW_r, \qquad s\in[t,T].
\end{equation}
Then, it follows from \cite[Prop.\ 4.3]{fuhrmantess02} that, under Assumptions \ref{A:SHDE}, \ref{A:BSDE}, and
if $\xi\in C_p(\mathbb{W})$, $p\geq 2$,
then
for any $(t,\mathbf{x})\in\Lambda$ there exists a unique solution $(Y_s^{t,\mathbf{x}},Z_s^{t,\mathbf{x}})_{s\in[0,T]}\in \mathcal{H}^2_\mathcal{P}(\mathbb{R})\times L_{\cal{P}}^2(H^*)$  to \eqref{BSDE} and it  belongs to the space $\in \mathcal{H}^p_\mathcal{P}(\mathbb{R})\times L_{\cal{P}}^p(H^*)$. Such solution  can be viewed as a Sobolev solution to PPDE \eqref{eq:PPDE}
(see e.g.\ \cite{BarlesLesigne}). Moreover $Y_t^{t,\mathbf{x}}$ is constant, so we may define
\begin{equation}
\label{hatu}
\hat u(t,\mathbf{x})  \defeq   Y_t^{t,\mathbf{x}}  = \E\bigg[\xi(X^{t,\mathbf{x}}) + \int_t^T F(s,X^{t,\mathbf{x}},Y_s^{t,\mathbf{x}}) ds\bigg], \ \ \ (t,\mathbf{x})\in\Lambda.
\end{equation}
 It can be shown, using the flow property of $X^{t,\mathbf x}$ and the uniqueness of the backward equation \eqref{BSDE}, that $Y_s^{t,\mathbf x}=\hat u(s,X^{t,\mathbf x})$ for all $s\in[t,T]$, $\P$-almost surely. Moreover, using the backward equation \eqref{BSDE}, the regularity of $\xi$ and $F$, and the flow property of $X^{t,\mathbf{x}}$ with respect to $(t,\mathbf{x})$, we can prove that $\hat u\in \cpol(\Lambda)$. This implies that $\hat u$ solves the functional equation \eqref{eq:milddef}   and it coincides with
the function of Proposition \ref{propp:2014-05-09:aa}. Vice versa, we can also prove an existence and uniqueness result for the backward equation \eqref{BSDE} if we know that there exists a unique solution $\hat u\in \cpol(\Lambda)$ to the functional equation \eqref{eq:milddef}. In conclusion, $\hat u$ admits a nonlinear Feynman-Kac representation formula through a non-Markovian forward-backward stochastic differential equation given by:
\[
\begin{cases}
X_s  =  e^{(s-t)A}\mathbf x_t + \int_t^s e^{(s-r)A}b(r,X)dr + \int_t^s e^{(s-r)A} \sigma(r,X) dW_r,    & s\in[t,T], \\
X_s = \mathbf x_s, & s\in[0,t], \\
Y_s  =  \xi(X) + \int_s^T F(r,X,Y_r) dr - \int_s^T Z_r dW_r, &s\in[0,T].
\end{cases}
\]
\end{Remark}

As a direct consequence of the martingale characterization in Theorem \ref{lemma:pp1}, we also get the following stability result.

\begin{Proposition}\label{propp:2014-10-boh}
Let the assumptions of Proposition \ref{prop:stabSDE} hold. Let Assumption~\ref{A:BSDE}(\ref{2016-08-06:04}) hold and assume that it also holds, for each $n\in\N$, for analogous objects $F_n$ with the same constants $L,p$.
 Let  $\{u_n, n\in \mathbb{N}\}$ be a bounded subset of $C_p(\Lambda)$, for some $p\geq 0$,  and let $u\in C_p(\Lambda)$.
Assume that the following convergences hold:
\begin{enumerate}[(i)]
\setlength\itemsep{0.05em}
\item \vskip-5pt
 $F_n(s,\cdot,y)\rightarrow F(s,\cdot,y)$ uniformly on compact subsets of $\mathbb W$ for each $(s,y)\in [0,T]\times \R$.
\item $u_n(s,\cdot)\rightarrow u(s,\cdot)$ uniformly on compact subsets of $\mathbb W$ for each $s\in [0,T]$.
\end{enumerate}
Finally, assume that, for each $n\in\N$,  the function $u_n$ is viscosity subsolution (resp., supersolution) to PPDE \eqref{eq:PPDE} associated to the coefficients $A_n,b_n,\sigma_n, F_n$.
Then u is a viscosity subsolution (resp., supersolution) to \eqref{eq:PPDE} associated to the coefficients $A,b,\sigma, F$.

\end{Proposition}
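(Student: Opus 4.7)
The plan is to reduce the statement to a stability property of the submartingale characterization given by Theorem \ref{lemma:pp1}, which avoids any direct manipulation of test functions and instead reduces everything to passing to the limit in expectations. I only treat the subsolution case, the supersolution case being completely symmetric.

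By Theorem \ref{lemma:pp1} applied to each $u_n$ with coefficients $A_n,b_n,\sigma_n,F_n$, for every $(t,\mathbf{x})\in\Lambda$ and every $s\in[t,T]$ one has
\begin{equation*}
u_n(t,\mathbf{x}) \leq \mathbb{E}\!\left[u_n(s,X^{(n),t,\mathbf{x}}) + \int_t^s F_n(r,X^{(n),t,\mathbf{x}},u_n(r,X^{(n),t,\mathbf{x}}))\,dr\right].
\end{equation*}
The goal is to pass to the limit in $n$ in both sides. The left-hand side converges to $u(t,\mathbf{x})$ by pointwise convergence of $u_n\to u$. To handle the right-hand side, I would first fix $p'>\max(p,p^*)$ and apply Proposition \ref{prop:stabSDE} to obtain $X^{(n),t,\mathbf{x}}\to X^{t,\mathbf{x}}$ in $\mathcal{H}^{p'}_\mathcal{P}(H)$. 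Extracting a subsequence, which is enough by an Urysohn-type argument since the target object is uniquely determined, I may assume $|X^{(n),t,\mathbf{x}} - X^{t,\mathbf{x}}|_\infty \to 0$ $\mathbb{P}$-a.s. and $\mathbb{E}[\,|X^{(n),t,\mathbf{x}}|_\infty^{p'}\,] \to \mathbb{E}[\,|X^{t,\mathbf{x}}|_\infty^{p'}\,]$.

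For each $\omega$ in the a.s.-convergence set the trajectories $\{X^{(n),t,\mathbf{x}}(\omega)\}_n$ form a compact subset of $\mathbb{W}$, so the locally uniform convergence $u_n(s,\cdot)\to u(s,\cdot)$ yields $u_n(s,X^{(n),t,\mathbf{x}}(\omega))\to u(s,X^{t,\mathbf{x}}(\omega))$ $\mathbb{P}$-a.s., and an analogous argument based on the locally uniform convergence $F_n(r,\cdot,y)\to F(r,\cdot,y)$ together with the Lipschitz assumption \ref{A:BSDE}(\ref{2016-10-17:00}) gives
\begin{equation*}
F_n(r,X^{(n),t,\mathbf{x}},u_n(r,X^{(n),t,\mathbf{x}}))\to F(r,X^{t,\mathbf{x}},u(r,X^{t,\mathbf{x}})),\qquad \mathbb{P}\otimes dr\text{-a.e.}
\end{equation*}
The growth bounds in Definition \ref{D:Consistent} together with Assumption \ref{A:BSDE}(\ref{2016-08-06:04}) give, uniformly in $n$,
\begin{equation*}
|u_n(s,X^{(n),t,\mathbf{x}})| + \int_t^s |F_n(r,X^{(n),t,\mathbf{x}},u_n(r,X^{(n),t,\mathbf{x}}))|\,dr \leq C\bigl(1+|X^{(n),t,\mathbf{x}}|_\infty^{p}\bigr),
\end{equation*}
and this dominating sequence converges a.s. and in $L^1$ (by $\mathcal{H}^{p'}_\mathcal{P}$-convergence and $p'>p$) to $C(1+|X^{t,\mathbf{x}}|_\infty^{p})$.

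At this point the generalized dominated convergence theorem (Lemma \ref{GD}) applies and yields
\begin{equation*}
u(t,\mathbf{x}) \leq \mathbb{E}\!\left[u(s,X^{t,\mathbf{x}}) + \int_t^s F(r,X^{t,\mathbf{x}},u(r,X^{t,\mathbf{x}}))\,dr\right],
\end{equation*}
for every $(t,\mathbf{x})\in\Lambda$ and every $s\in[t,T]$. Invoking the reverse implication (\ref{2016-06-21:02})$\Rightarrow$(\ref{2016-06-21:03}) of Theorem \ref{lemma:pp1} with coefficients $A,b,\sigma,F$, this exactly says that $u$ is a viscosity subsolution of PPDE \eqref{eq:PPDE} with the limiting data. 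The only delicate point is the interplay between the locally uniform (and not global) convergence of $u_n$ and $F_n$ with the $\mathbb{W}$-valued convergence of $X^{(n),t,\mathbf{x}}$; this is resolved by first passing to an a.s.-convergent subsequence, whose trajectories are confined to a random compact subset of $\mathbb{W}$, and then upgrading to convergence of the full sequence via uniqueness of the limit.
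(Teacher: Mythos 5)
Your proposal follows essentially the same route as the paper's own proof: translate the viscosity property of each $u_n$ into the integral inequality via Theorem~\ref{lemma:pp1}, use Proposition~\ref{prop:stabSDE} and an a.s.-convergent subsequence so that the trajectories $\{X^{(n),t,\mathbf{x}}(\omega)\}_n\cup\{X^{t,\mathbf{x}}(\omega)\}$ form a compact subset of $\mathbb W$ where locally uniform convergence of $u_n$ and $F_n$ can be exploited, pass to the limit via Lemma~\ref{GD}, and read the limit inequality back as the viscosity property of $u$ via Theorem~\ref{lemma:pp1}. Your explicit remark about upgrading the subsequence limit to the full sequence by uniqueness is a point the paper leaves implicit.

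One issue worth flagging: you invoke Assumption~\ref{A:BSDE}(\ref{2016-10-17:00}) (Lipschitz continuity of $F$ in $y$) to conclude that $F_n(r,X^{(n),t,\mathbf{x}},u_n(r,X^{(n),t,\mathbf{x}}))\to F(r,X^{t,\mathbf{x}},u(r,X^{t,\mathbf{x}}))$, but that Lipschitz hypothesis is not among the proposition's assumptions, which only require Assumption~\ref{A:BSDE}(\ref{2016-08-06:04}). The assumed convergence $F_n(s,\cdot,y)\to F(s,\cdot,y)$ is locally uniform in $\mathbf{x}$ only for each fixed $(s,y)$, whereas in your argument the third slot $y_n=u_n(r,X^{(n),t,\mathbf{x}})$ also varies with $n$; some extra uniformity in $y$ (Lipschitz, or equicontinuity) genuinely seems needed to carry out this step from the stated hypotheses, and you should not be importing one that is not assumed. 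The paper's own proof passes over the same point with ``arguing analogously,'' so the gap is latent there too, but a self-contained write-up should either state the needed uniformity in $y$ or show that joint continuity of $F$ and $F_n$ together with the given convergence mode suffices.
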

\begin{proof}
For any $n>0$ and $(t,\mathbf x)\in\Lambda$, it follows from Proposition \ref{Exist} that there exists a unique mild solution $X^{(n),t,\mathbf x}$ to   SDE \eqref{SHDE} with coefficients $A_n$, $b_n$, $\sigma_n$.
By Proposition \ref{prop:stabSDE}
\begin{equation}
\label{StabMild}
\lim_{n\rightarrow \infty}X^{(n),t,\mathbf x}
=
 X^{t,\mathbf x} \ \mbox{in}\ {\mathcal{H}_\mathcal{P}^p(H)}, \ \forall\,(t,\mathbf x)\in\Lambda.
\end{equation}
Since $u_n$ is a viscosity subsolution (the supersolution case can be proved in a similar way) to PPDE \eqref{eq:PPDE}, from 
 Theorem~\ref{lemma:pp1}\emph{(\ref{2016-06-21:02})} we have, for every $(t,\mathbf{x})\in\Lambda$ with $t<T$,
\begin{equation}
\label{Ineq_u_eps}
u_n(t,\mathbf{x})  \leq  \E\bigg[u_n(s,X^{(n),t,\mathbf{x}}) + \int_t^{s} F_n(r,X^{(n),t,\mathbf{x}}, u_n(r,X^{(n),t,\mathbf{x}}))dr\bigg],
\end{equation}
for all $s\in[t,T]$.
In view of the same theorem, to conclude the proof we just need to prove,  letting $n\rightarrow \infty$, that the same inequality holds true when $u_n,\ F_n $ and $X^{(n),t,\mathbf{x}}$ are replaced by $u,\ F $ and $X^{t,\mathbf{x}}$, respectively.

Clearly the left-hand side of the above inequality tends to $u(t,\mathbf{x})$ as $n\rightarrow \infty$. Let us consider the right-hand side. From \eqref{StabMild}, up to extracting a subsequence, we have for $\P$-a.e.\ $\omega$, the convergence $X^{(n),t,\mathbf{x}}(\omega)\to X^{t,\mathbf{x}}(\omega)$ in $\mathbb W$.
Fix such an $\omega$. Then
$$
\mathcal{S}(\omega)\coloneqq  \left\{X^{(n),t,\mathbf{x}}(\omega)\right\}_{n\in\mathbb{N}}\bigcup  \left\{X^{t,\mathbf{x}}(\omega)\right\}
$$
is a compact subset of $\mathbb W$. Then, for each $s\in [t,T]$,
\begin{multline*}
 |u_n(s,X^{(n),t,\mathbf{x}}(\omega))-
    u(s,X^{t,\mathbf{x}}(\omega))|\leq\\
\leq
    \sup_{\mathbf{z}\in \mathcal{S}(\omega)}|u_n(s,\mathbf{z})-
    u(s,\mathbf{z})|
    +
|u(s,X^{(n),t,\mathbf{x}}(\omega))-
    u(s,X^{t,\mathbf{x}}(\omega))|\stackrel{n\rightarrow\infty}{\longrightarrow} 0
\end{multline*}
because $u_n(s,\cdot)\to u(s,\cdot)$ on compact subsets of $\mathbb W$,  $u$ is continuous and $X^{(n),t,\mathbf{x}}(\omega)\to X^{t,\mathbf{x}}(\omega)$  in $\mathbb W$.
This shows that $u_n(s,X^{(n),t,\mathbf{x}}(\omega)) {\rightarrow}  u(s,X^{t,\mathbf{x}}(\omega))$ for every $s\in[t,T]$. Arguing analogously, we have for each $s\in [t,T]$
$$
F_n(s,X^{(n),t,\mathbf{x}}(\omega),u_n(s,X^{(n),t,\mathbf{x}}(\omega)))
\stackrel{n\rightarrow\infty}{\longrightarrow}
F(s,X^{t,\mathbf{x}}(\omega),u(s,X^{t,\mathbf{x}}(\omega))).
$$
Now we can conclude by applying Lemma \ref{GD}. Indeed, assuming without loss of generality $t<s$, the hypotheses are verified for $(\Sigma, \mu)=(\Omega\times[t,s],\mathbb{P}\otimes Leb)$ and
\begin{equation*}
\begin{split}
f_n(\omega,r)=& \frac{1}{s-t}u_n(s,X^{(n),t,\mathbf{x}}(\omega))+F_n(r,X^{(n),t,\mathbf{x}}(\omega),u_n(r,X^{(n),t,\mathbf{x}}(\omega))),\\
f(\omega,r)=& \frac{1}{s-t}u(s,X^{t,\mathbf{x}}(\omega))+F(r,X^{t,\mathbf{x}}(\omega),u(r,X^{t,\mathbf{x}}(\omega))),\\
g_n(\omega,r)=&g_n(\omega)=M'(1+| X^{(n),t,\mathbf{x}}(\omega)|^p_\infty), \\
g(\omega,r)=&g(\omega)=M'(1+| X^{t,\mathbf{x}}(\omega)|^p_\infty),
\end{split}
\end{equation*}
for a sufficiently large $M'>0$, as $\{u_n, n\in \mathbb{N}\}$ is a bounded subset of $C_p(\Lambda)$, and since $\int_\Sigma g_n d\mu\rightarrow\int_\Sigma  gd \mu$ by  \eqref{StabMild}.
\end{proof}

\section{The Markovian case}\label{sec:markov}

In the Markovian case, i.e., when all data depend only on the present, infinite-dimensional PDEs of  type  \eqref{eq:PPDE}--\eqref{terminal}  have been studied  from the point of view of viscosity solutions starting from  \cite{Lio1,Lio2,Lio3}. In this section we compare the results of the literature with the statement of  our main Theorem  \ref{lemma:pp1}  in this Markovian  framework.

Hence,  let us assume that the data
 $b$, $\sigma$, $F$, $\xi$ satisfy all the assumptions used in the previous sections
and, moreover, that they depend only on $x=\mathbf{x}_t$, instead of the whole path $\mathbf{x}$.
The SDE \eqref{SHDE} is no more
path-dependent and takes the following form:
\begin{equation}
\label{SHDE1}
\begin{cases}
dX_s= AX_s ds + b(s,X_s)ds + \sigma(s,X_s)dW_s, \qquad &s\in[t,T], \\
X_t = x\in H.
\end{cases}
\end{equation}
Accordingly, \eqref{PPDE-intro}
 becomes a non path-dependent (\footnote{In this section we drop, for simplicity, the final condition $\xi$. But it is important to notice that the PDE must be considered path-dependent even if only $\xi$ depends on the past, while $b,\sigma, F$ do not.}) second order
parabolic PDE in the Hilbert space $H$, which is formally written   for $(t,x)\in[0,T)\times \mathcal{D}(A)$ as (\footnote{Notice that the time derivative $\partial_t u(t,{x})$ here appearing can denote equivalently the Dupire time-derivative of Definition \ref{D:SpaceDer} or  the standard partial right time-derivative,
as in this Markovian case they coincide each other on $[0,T)$.
})
\begin{multline}
\label{eq:PPDE1}
-\partial_t u(t,x) - \frac{1}{2}\text{Tr}\left[\sigma(t,x)\sigma^*(t,x)D^2 u(t,x)\right]
- \langle Ax,Du(t,x)\rangle-
\\
 - \langle b(t,x),Du(t,x)\rangle
-{F}(t,x,u(t,x)) =0.
\end{multline}
In such Markovian framework, the results of Section~\ref{sec:ppde} 
 still hold.
Indeed, defining viscosity solutions of \eqref{eq:PPDE1} as in Definition \ref{def:visc_sol_PPDE}, with $x$ in place of $\mathbf{x}$, we know from Theorem \ref{teo:main} that there exists a unique viscosity solution $\hat{u}$ to \eqref{eq:PPDE1}
and that it admits the probabilistic representation formula \eqref{hatu} of Remark \ref{rem:back}, with $x$ in place of $\mathbf{x}$.

On the other hand,  equations like \eqref{eq:PPDE1} have been studied in the literature, by means of
what we call here the ``standard'' viscosity solution approach. This is performed, in the spirit of the finite-dimensional case,  by computing the
terms of \eqref{eq:PPDE1} on smooth test functions suitably defined
and using the method of doubling variables to prove the comparison.
Such ``standard'' approach in infinite dimension has been first introduced in \cite{Lio1,Lio2,Lio3}
and then developed in various papers
(see e.g.\ \cite{GRS,GSZakai,GozSw,Kel,S94} and \cite[Ch.\ 3]{FGS14} for a survey).

To compare our results with those obtained in the literature quoted  above,
we first introduce a concept of classical solution of \eqref{eq:PPDE1}.

First of all, observe that \eqref{eq:PPDE1} is well defined only in $[0,T)\times \mathcal{D}(A)$.
In order to {give a meaning to \eqref{eq:PPDE1} in $[0,T)\times H$}
we consider the operator $A^*$, adjoint of $A$, defined on $\mathcal{D}(A^*)\subset H$, and express the term containing $Ax$ in \eqref{eq:PPDE1} by writing
$$\langle Ax,Du(t,x)\rangle=\langle x,A^*Du(t,x)\rangle,$$ which is well defined in
$[0,T)\times H$ provided that $Du\in \mathcal{D}(A^*)$.
Hence, to define classical solutions of such equation, we define the operator $\Lc_1$ as follows: the domain is\ (\footnote{ $UC^{1,2}([0,T]\times H)$ denotes the space of maps $\psi\colon[0,T]\times H\rightarrow\R$ which are uniformly continuous together with their first time Fr\'echet derivative and their first and second spatial Fr\'echet derivatives})
\begin{equation*}
  \begin{split}
      \mathcal{D}(\Lc_1)=&\Big\{
        \psi \in UC^{1,2}([0,T]\times H)\colon \mbox{the maps } (t,x) \mapsto \langle x,A^*D\psi(t,x)\rangle,\\
&\phantom{\ \ \ }     (t,x) \mapsto   A^*D\psi(t,x),\ 
(t,x) \mapsto \frac{1}{2}\text{Tr}\left[\sigma(t,x)\sigma^*(t,x)D^2\psi(t,x)\right],\\
&    \mbox{\hskip6cm belong to $UC([0,T]\times H)$}
      \Big\},
\end{split}
\end{equation*}
and, {for $\psi \in \mathcal{D}(\Lc_1)$,}
\begin{multline*}
  \Lc_1 \psi (t,x) =\partial_t \psi(t,x)
+\frac{1}{2}\text{Tr}\left[\sigma(t,x)\sigma^*(t,x)D^2 \psi(t,x)\right]\\
+ \langle x,A^*D\psi(t,x)\rangle+\langle b(t,x),D\psi(t,x)\rangle.
\end{multline*}
Then we say that $u$ is a classical solution
of \eqref{eq:PPDE1} if $u \in D(\Lc_1)$ and satisfies
\begin{equation}
\label{eq:PPDE1bis}
-\Lc_1 u(t,x) -F(t,x,u(t,x)) =0,\qquad
\forall\, (t,x)\in[0,T)\times H.
\end{equation}

The standard definition of viscosity subsolution (supersolution) for \eqref{eq:PPDE1}
says roughly that, at any given $(t,x) \in [0,T)\times H$, the equation must be satisfied
with $\le$ ($\ge$), when we substitute to the derivatives of $u(t,x)$ the derivatives of $\varphi(t,x)$, where $\varphi$ is a suitably chosen test function.

Clearly, in this context test functions should be chosen in such a way that  all terms of \eqref{eq:PPDE1} have classical sense. Hence, their regularity must be substantially the one
required for classical solutions, i.e., roughly, $\varphi \in \mathcal{D}(\Lc_1)$.
This regularity is very demanding, much more than the one required in the finite-dimensional case:  requiring that $D\varphi\in \mathcal{D}(A^*)$ and the finite trace condition in the second order term strongly restricts the set of test functions. In this way the proof of the existence has not a greater structural difficulty with respect to the finite-dimensional case,
 but the uniqueness, which is based on a delicate
construction of suitable test functions, becomes much harder.

To be more explicit, let us first give  a definition of
 ``naive''  viscosity solution to \eqref{eq:PPDE1}.

\begin{Definition}\label{def:viscst1naive}
${}$
  \begin{enumerate}[(i)]
\setlength\itemsep{0.05em}
\item \vskip-5pt
An  upper semicontinuous function $u\colon[0,T]\times H\rightarrow \R$ is called  a \textbf{naive viscosity subsolution} of\eqref{eq:PPDE1} if
\begin{equation*}
  - \Lc_1 \varphi(t,{x}) -{F}(t,{x},u(t,{x})) \le 0,
\end{equation*}
for any $(t,x) \in [0,T) \x H$ and any function $\varphi\in D(\Lc_1)$ such that $\varphi-u$ has a local minimum at $(t,x)$.
\item
 A lower semicontinuous function $u\colon[0,T]\times H\rightarrow \R$ is called a \textbf{naive viscosity supersolution} of  \eqref{eq:PPDE1} if
\begin{equation*}
  - \Lc_1 \varphi(t,{x}) -{F}(t,{x},u(t,{x})) \ge 0,
\end{equation*}
for any $(t,x) \in [0,T) \x H$ and any function $\varphi\in D(\Lc_1)$ such that $\varphi-u$ has a local maximum at $(t,x)$.
\item  A continuous function $u\colon[0,T]\times H\rightarrow \R$ is called a \textbf{naive viscosity solution} of  \eqref{eq:PPDE1}
if it is both a viscosity subsolution and a viscosity supersolution.
\end{enumerate}
\end{Definition}

If we adopt this definition, it is clear that the set of test functions
used is strictly included in the one used in our Definition \ref{def:visc_sol_PPDE}.
Hence,  if a function is a viscosity solution according to Definition \ref{def:visc_sol_PPDE},
it must also  be a viscosity solution according to Definition \ref{def:viscst1naive}, while
the opposite is, a priori, not true.
Hence, if one { was} able to prove a uniqueness result for viscosity solution according to Definition \ref{def:viscst1naive}, such a  result would be more powerful than our
existence and uniqueness Theorem \ref{teo:main}.
However, the technique used to prove uniqueness in finite dimension does not work with { Definition \ref{def:viscst1naive}} and  there are no general uniqueness results with this definition.

In the literature concerning  ``standard'' viscosity solutions in infinite dimension this problem has been overcome by introducing suitable
restrictions on the family of equations and adding an ad hoc radial term $g$ to each test function $\varphi$.
We explain more in detail what is needed to apply such techniques to our equation  \eqref{eq:PPDE1};
then we give a result obtained with such technique and compare it with our previous results.

To start, it is useful to rewrite equation \eqref{eq:PPDE1} as follows:

\begin{equation} \label{eq:PPDE2}
-\partial_t u(t,{x}) - \langle x,A^*Du(t,x)\rangle - Lu(t,{x})
 -{F}(t,{x},u(t,{x}))  =  0, \\
\end{equation}
for $(t,x)\in[0,T)\times H$,
with, for any $u\in C^{1,2}([0,T]\times H)$ in the sense of Fr\'echet,
\[
Lu(t,x)  =  \langle b(t,x),Du(t,x)\rangle
+ \frac{1}{2}\text{Tr}\big[\sigma(t,x)\sigma^*(t,x)D^2u(t,x)\big].
\]
To account for the ``difficult'' term $\langle x,A^*Du(t,x)\rangle$ we impose the following assumption.

\begin{Assumption}
\label{A:A}
$A$ is a maximal dissipative operator in $H$.
\end{Assumption}

Notice that Assumption  \ref{A:A} implies that $A$
generates a $C_0$-semigroup of contractions on $H$.
Moreover, from
Assumption~\ref{A:A} and 
 \cite{Ren}, it follows   that there exists a symmetric, strictly positive, and bounded operator $B$ on $H$ such that $A^*B$ is a bounded operator  on $H$
and
\begin{equation}\label{WeakBCondition}
-A^*B+c_0B  \geq  0,
\end{equation}
for some $c_0>0$.

\begin{Definition}
  Let $\{x_n\}_{n\in \mathbb{N}}\subset H$ be a sequence and let $x\in H$. We say that the sequence $\{x_n\}_{n\in\mathbb{N}}$ is \emph{$B$-convergent} to $x$, if  $\{x_n\}_{n\in \mathbb{N}}$ converges weakly to $x$ and 
$\{Bx_n\}_{n\in \mathbb{N}}$ converges strongly to  $Bx$ in $H$.

A function $u\colon[0,T]\times H\to \mathbb{R}$ is said to be \textbf{$B$-upper semicontinuous} (resp.\ \textbf{$B$-lower semicontinuous}) if for any $\{t_n\}_{n\in\mathbb{N}}\subset [0,T]$ convergent to $t\in[0,T]$, and  for any $\{x_n\}_{n\in\mathbb{N}}\subset H$ $B$-convergent to $x\in H$, we have
$$
\limsup_{n\to \infty}u(t_n,x_n)\leq u(t,x)\qquad
\mbox{(resp.\ }
\liminf_{n\to \infty}u(t_n,x_n)\geq u(t,x)
 \mbox{).}
$$
Finally, $u$ is \textbf{$B$-continuous} if it is $B$-upper and $B$-lower semicontinuous.
\end{Definition}

We consider two classes of smooth (test) functions:
\begin{enumerate}[(C1)]
\setlength\itemsep{0.05em}
\item
\label{2016-10-17:03}
 \vskip-5pt
 (the ``smooth'' part) $\varphi\in C^{1,2}([0,T]\times H)$, $D\varphi$ is $\mathcal{D}(A^*)$-valued, $\partial_t\varphi$, $A^*D\varphi$, and $D^2\varphi$ are uniformly continuous on $[0,T]\times H$, and $\varphi$ is $B$-lower semiconinuous.
\item 
\label{2016-10-17:04}
(the ``radial'' part) $g\in C^{1,2}([0,T]\times\R)$ and, for every $t\in[0,T]$, the function $g(t,\cdot)$ is even on $\R$ and nondecreasing on $[0,\infty)$.
\end{enumerate}

\begin{Definition}\label{def:viscst1}
  \begin{enumerate}[(i)]
\setlength\itemsep{0.05em}
\item \vskip-5pt
 A  $B$-upper semicontinuous function $u\colon[0,T]\times H\rightarrow \R$, which is bounded on bounded sets,  is called  a \textbf{viscosity subsolution} of \eqref{eq:PPDE2} if
\begin{equation*}
  -\partial_t(\varphi+g)(t,{x}) - \langle x,A^*D\varphi(t,x)\rangle - L(\varphi+g)(t,{x})
 -{F}(t,{x},u(t,{x})) \le 0,
\end{equation*}
for any $(t,x) \in [0,T) \x H$ and any pair of functions $(\varphi, g)$ belonging, respectively, to the classes 
\emph{(C\ref{2016-10-17:03})--(C\ref{2016-10-17:04})}
 above and   such that $\varphi+g-u$ has a local minimum at $(t,x)$.
\item  A  $B$-lower semicontinuous function $u\colon[0,T]\times H\rightarrow \R$ , which is bounded on bounded sets,  is called a \textbf{viscosity supersolution} of  \eqref{eq:PPDE2} if
\begin{equation*}
  -\partial_t (\varphi-g)(t,{x}) - \langle x,A^*D\varphi(t,x)\rangle -L(\varphi-g)(t,{x}) - {F}(t,{x},u(t,{x})) \ge 0,
\end{equation*}
for any $(t,x) \in [0,T) \x H$ and any pair of functions $(\varphi, g)$ belonging, respectively, to the classes 
\emph{(C\ref{2016-10-17:03})--(C\ref{2016-10-17:04})}
above and   such that $\varphi-g-u$ has a local maximum at $(t,x)$.
\item  A function $u\colon[0,T]\times H\rightarrow \R$ is called a \textbf{viscosity solution} of  \eqref{eq:PPDE2} if it is both a viscosity subsolution and a viscosity supersolution.
\end{enumerate}
\end{Definition}

\begin{Remark}\label{rem:radial}
The radial function $g$ belonging to the class 
\emph{(C\ref{2016-10-17:04})}
 introduced in Definition \ref{def:viscst1}
plays the role of cut-off function and is needed to produce, together with the $B$-continuity property, local/global minima and maxima  of $\varphi+g-u $ and $\varphi-g-u$, respectively. However, the introduction of the radial function  forces to impose Assumption \ref{A:A} to get rid of the term $\langle Ax,Dg(t,x)\rangle$ which would come out from the gradient of $g$.

Radial test functions could also be included in our Definition \ref{def:visc_sol_PPDE} when $A$ is a maximal monotone operator without compromising the existence result (but note that it would be redundant  including them in our definition, as  they are  not needed  to prove uniqueness in Theorem \ref{teo:main}). In this case, our Definition \ref{def:visc_sol_PPDE} would be stronger than Definition \ref{def:viscst1} in the sense that a viscosity subsolution (supersolution) in the sense of Definition \ref{def:visc_sol_PPDE} must be necessarily also a viscosity subsolution (supersolution) according to Definition \ref{def:viscst1}. Indeed, a test function in the sense of Definition \ref{def:viscst1} would be also a test function in the sense of Definition \ref{def:visc_sol_PPDE}.
\end{Remark}

We can now state a comparison theorem and an existence result for equation \eqref{eq:PPDE2}. Firstly, we need to introduce some notations. Let $H_{-1}$ be the completion of $H$ with respect to the norm
\[
|x|_{-1}^2  \coloneqq   \langle Bx,x\rangle.
\]
The norm $|\cdot|_{-1}$ in $H$ is weaker than $|\cdot|$.
The space $H_{-1}$ is a separable Hilbert space with the inner product
\[
\langle x,x'\rangle_{-1}  \coloneqq   \big\langle B^{1/2}x,B^{1/2}x'\big\rangle.
\]
Let now $\{e_1,e_2,\ldots\}$ be an orthonormal basis in $H_{-1}$ made of elements of $H$. For $N\geq 1$, 
 we denote $H_N=\text{span}\{e_1,\ldots,e_N\}$. Let $P_N\colon H_{-1}\rightarrow H_{-1}$ be the orthogonal projection onto $H_N$ and denote $P_N^\perp=I-P_N$.

\begin{Theorem}\label{teo:SW}
Let Assumption \ref{A:A} hold and
assume the following.
\begin{enumerate}[(i)]
\setlength\itemsep{0.05em}
\item\label{2016-10-13:04} \vskip-5pt
 The map $\mathbb{R} \mapsto \mathbb{R},\ y\mapsto F(t,x,y)$, is nonincreasing, for all $(t,x)\in[0,T]\times H$.
\item \label{2016-10-13:02} $F$ is uniformly continuous on bounded sets.
\item\label{2016-10-13:06}
 For all $R>0$, there exists a modulus of continuity $w_R$ such that
\begin{equation*}
    |F(t,x,y)-F(t',x',y)|\leq w_R(|t-t'|+|x-x'|_{-1}),
\end{equation*}
for all $t,t'\in (0,T)$, $y\in\mathbb R$, 
and for all $x,x'\in H$, with $ |x|\leq R$, $ |x'|\leq R$.
\item\label{2016-10-13:01} $b$ is {uniformly continuous on bounded sets} and
$$
|b(t,x)-b(t,x')|\leq
 L|x-x'|_{-1}\qquad \forall t\in[0,T],\ x,x'\in H.
$$
\item \label{2016-10-13:03}
{$\sigma(t,x)\in L_2(K;H)$ for every $(t,x)\in[0,T]\times H$, $\sigma\colon[0,T]\times H\rightarrow L_2(K;H)$ is uniformly continuous on bounded sets} and
$$
|\sigma(t,x)-\sigma(t,x')|_{L_2(K;H)}  \leq  L|x-x'|_{-1},\qquad
\forall t\in[0,T],\ x,x'\in H.
$$
\item \label{2016-10-15:00}
The following limit holds
\[
\lim_{N\rightarrow\infty}\textup{Tr}\big[\sigma(t,x)\sigma^*(t,x)BP_N^\perp\big]  =  0, \qquad \forall\,(t,x)\in[0,T]\times H.
\]
\end{enumerate}
Then, the following statements hold true.
\begin{enumerate}[(a)]
\setlength\itemsep{0.05em}
\item\label{2016-10-13:00} \vskip-5pt
 Let $u,v$ be  continuous
 viscosity subsolution  and  supersolution, respectively, 
\emph{in the sense of Definition \ref{def:viscst1}},  to \eqref{eq:PPDE2}.
Assume that, for every $R>0$ there exists a modulus of continuity $\tilde w_R$ such that,
for all $t,t'\in (0,T)$ and all  $x,y\in H$, with $  |x|\leq R$, $ |y|\leq R$,
\begin{equation}\label{2016-10-14:00}
\max\{  |u(t,x)-u(t',y)|,  |v(t,x)-v(t',y)|\}\leq \tilde w_R(|t-t'|+|x-y|_{-1})
\end{equation}
and that, for some $p\geq 0$, 
\begin{equation}\label{2016-10-14:01}
  \max\{u(t,x),-v(t,x)\}\leq p+|x|^p\qquad \forall (t,x)\in(0,T)\times H.
\end{equation}

 If $u(T,\cdot)\leq v(T,\cdot)$, then $u\leq v$ on $[0,T]\times H$.
\item\label{2016-10-14:02}
Let
$\xi\colon H\rightarrow \mathbb{R}$.
Assume in addition that
$F(t,x,y)=F(t,x)$ does not depend on $y$ and that, for some $p\geq 0$,
\begin{equation*}
  \max\{|F(t,x)|,  |\xi(x)|\}\leq p+|x|^p\qquad \forall t\in [0,t],\ \forall x\in H.
\end{equation*}
Finally assume
 that
for all $R>0$ 
 there exists a modulus of continuity $\hat w_R$ such that
 \begin{multline*}
\max\{|b(t,x)-b(t',x)|,
|\sigma(t,x)-\sigma(t',x)|_{L_2(K;H)}
,   |\xi(x)-\xi(x')|\}\leq\\
\leq \hat w_R(|t-t'|+|x-x'|_{-1})
 \end{multline*}
for all $t,t'\in[0,T]$ and all
$ x,x'\in H$, with $ |x|\leq R$, $|x'|\leq R$.

Then, there exists a unique viscosity solution $\hat u$ to \eqref{eq:PPDE2},
\emph{in the sense of Definition \ref{def:viscst1}},
among functions in the set
\begin{equation*}
  \begin{split}
    S\coloneqq
\Big\{ & u\colon [0,T]\times H\rightarrow \mathbb{R}\mbox{\ s.t.\ }\sup_{(t,x)\in [0,T]\times H}\frac{|u(t,x)|}{1+|x|^k}\mbox{\ for some\ }k\geq 0 \\
&
 \lim_{t\rightarrow T}|u(t,x)-\xi(x)|=0\ \mbox{uniformly on bounded subsets of $H$}
\Big\},
\end{split}
\end{equation*}
The solution $\hat u$  admits the probabilistic representation
 (\,\footnote{When $H$ is finite-dimensional, the probabilistic representation formula \eqref{hatu} provides the unique ``standard'' viscosity solution of \eqref{eq:PPDE2} also when $F$ depends on $y$, see \cite{pardouxpeng}.})
\[
\hat{u}(t,x)  = \E\left[\xi(X^{t,x}_T)+\int_t^TF(s,X^{t,x}_s)ds\right], \qquad (t,x)\in[0,T]\times H.
\]
\end{enumerate}
\end{Theorem}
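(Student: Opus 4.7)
The plan is to attack the two statements by classical infinite-dimensional viscosity solution techniques, as developed by Lions and later refined by Swi\k{e}ch: doubling of variables with a penalty built from the weaker norm $|\cdot|_{-1}$, finite-dimensional reduction by the projections $P_N$, and an application of a Crandall--Ishii--Swi\k{e}ch type maximum principle. For part (\ref{2016-10-13:00}) (comparison), I would first argue by contradiction: suppose $\sup_{[0,T]\times H}(u-v) > 0$, and use the growth bound \eqref{2016-10-14:01} together with the modulus of continuity \eqref{2016-10-14:00} to localize a maximizer. Introduce the penalized functional
\[
\Phi_{\eps,\delta,\mu}(t,s,x,y) \defeq u(t,x) - v(s,y) - \tfrac{1}{2\eps}|x-y|_{-1}^2 - \tfrac{1}{2\eps}|t-s|^2 - \delta\bigl(g(x)+g(y)\bigr) - \mu\,\eta(t),
\]
where $g$ is a radial $B$-lower semicontinuous function with quadratic growth in $|\cdot|_{-1}$ which also grows like $|x|^{p+1}$ to compensate the polynomial bound, and $\eta(t)$ is a small time penalty (e.g.\ $\eta(t)=1/(T-t)$) that rules out contact at $t=T$. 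The crucial point is that $B$-upper/lower semicontinuity combined with the fact that bounded sets of $H$ are relatively compact in $H_{-1}$ yields existence of a maximizer $(\bar t,\bar s,\bar x,\bar y)$; then standard arguments give $\tfrac{1}{\eps}|\bar x-\bar y|_{-1}^2 \to 0$ as $\eps\to 0$ and $\delta \bigl(|\bar x|^{p+1}+|\bar y|^{p+1}\bigr) \to 0$ as $\delta\to 0$.

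The second step is the finite-dimensional reduction: one adds to the penalty the term $\tfrac{1}{\eta}|P_N^\perp(x-y)|^2$ (or works on $H_N$ directly) so that the Crandall--Ishii--Swi\k{e}ch lemma in infinite dimension applies (in the form used in \cite{S94}, \cite{Kel}, \cite[Ch.~3]{FGS14}), producing test functions $\varphi_1,\varphi_2$ of class (C\ref{2016-10-17:03}) and radial pieces $g_1,g_2$ of class (C\ref{2016-10-17:04}) whose derivatives at the contact point control the second-order derivatives of the penalty. Writing the two viscosity inequalities and subtracting, the $\langle \bar x,A^*D\varphi_1\rangle - \langle \bar y, A^*D\varphi_2\rangle$ term is controlled by the dissipativity \eqref{WeakBCondition} (giving an inequality of order $c_0|\bar x-\bar y|_{-1}^2/\eps$, which is bounded), the drift term is controlled by the Lipschitz estimate in $|\cdot|_{-1}$ from hypothesis (\ref{2016-10-13:01}), and the diffusion contribution produces a main finite-dimensional part controlled by (\ref{2016-10-13:03}) plus a tail $\textup{Tr}\bigl[\sigma\sigma^* B P_N^\perp\bigr]$ which vanishes by the nuclearity assumption (\ref{2016-10-15:00}). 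Sending $N\to\infty$, then $\eps\to 0$ and finally $\delta,\mu\to 0$, and using the monotonicity (\ref{2016-10-13:04}) and the moduli (\ref{2016-10-13:02}), (\ref{2016-10-13:06}) to control the $F$-difference, produces the contradiction $\sup(u-v)\leq 0$. I expect this nuclearity/trace limit combined with the careful choice of $g$ that is \emph{simultaneously} $B$-lower semicontinuous and polynomially coercive to be the main technical obstacle.

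For part (\ref{2016-10-14:02}) (existence and representation), the idea is to define $\hat u$ directly by
\[
\hat u(t,x) \defeq \E\!\left[\xi(X^{t,x}_T) + \int_t^T F(s,X^{t,x}_s)\,ds\right],
\]
with $X^{t,x}$ the mild solution of \eqref{SHDE1}. Using the stability estimate \eqref{E:EstimateSDE} (in the Markovian, non-path-dependent form) and the polynomial growth of $\xi$ and $F$, one checks that $\hat u$ satisfies $|\hat u(t,x)|\leq C(1+|x|^k)$ for a suitable $k$ and that $\hat u(t,\cdot)\to \xi$ uniformly on bounded sets as $t\to T$, so $\hat u\in S$. $B$-continuity of $\hat u$ follows from continuity of the flow $x\mapsto X^{t,x}$ along $B$-convergent sequences: the Lipschitz continuity of $b,\sigma$ in $|\cdot|_{-1}$ from (\ref{2016-10-13:01})--(\ref{2016-10-13:03}), combined with the fact that bounded sets are relatively compact in $H_{-1}$ and with Gronwall in the $|\cdot|_{-1}$ norm, shows that $X^{t_n,x_n}\to X^{t,x}$ in probability in $C([0,T];H_{-1})$, which together with the continuity of $\xi,F$ in $|\cdot|_{-1}$ (hypotheses (\ref{2016-10-13:06}) and the final modulus assumption) and dominated convergence yields the claimed semicontinuity in the $B$-sense.

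It remains to verify that $\hat u$ is a viscosity sub- and supersolution in the sense of Definition \ref{def:viscst1}. Given a test pair $(\varphi,g)$ with $\varphi+g-\hat u$ attaining a local minimum at $(t,x)$, one applies It\^o's formula to $\varphi(s,X^{t,x}_s)+g(s,X^{t,x}_s)$ up to a small stopping time $\tau_h = (t+h)\wedge \inf\{s\colon |X^{t,x}_s-x|>1\}$: the regularity in (C\ref{2016-10-17:03})--(C\ref{2016-10-17:04}) (in particular $D\varphi\in\Dc(A^*)$ and boundedness of $A^*D\varphi$) makes the It\^o expansion rigorous without assuming $x\in\Dc(A)$, and the dynamic programming identity $\hat u(t,x) = \E\bigl[\hat u(\tau_h,X^{t,x}_{\tau_h}) + \int_t^{\tau_h}F(s,X^{t,x}_s)ds\bigr]$ (which follows from the flow property of $X^{t,x}$ via the standard conditioning argument) combined with $\varphi(t,x)+g(t,x)-\hat u(t,x)\leq \E[\varphi(\tau_h,X^{t,x}_{\tau_h})+g(\tau_h,X^{t,x}_{\tau_h})-\hat u(\tau_h,X^{t,x}_{\tau_h})]$ yields, after dividing by $h$ and letting $h\to 0^+$, the subsolution inequality. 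The supersolution case is symmetric. Uniqueness within $S$ then follows from the comparison principle of part (\ref{2016-10-13:00}), once one checks that the moduli (\ref{2016-10-14:00})--\eqref{2016-10-14:01} can be verified for $\hat u$ from the additional joint-time moduli on $b,\sigma,\xi$. The main obstacle here is less the It\^o computation than confirming the $|\cdot|_{-1}$-moduli for $\hat u$ uniformly on bounded sets of $H$, which again rests on the $|\cdot|_{-1}$-Lipschitz estimates of the coefficients and on Gronwall in $H_{-1}$.
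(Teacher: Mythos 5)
Your plan attacks the theorem from scratch by reconstructing the Lions--Świech machinery: doubling of variables in the weak $|\cdot|_{-1}$ norm, radial penalties, the weak-$B$ dissipativity condition, finite-dimensional reduction via $P_N$, the Crandall--Ishii--Świech lemma, and a Feynman--Kac/dynamic-programming argument for existence. The paper takes a genuinely different route: it does \emph{not} re-derive any of this. Its proof is a pure verification exercise --- the comparison statement (\ref{2016-10-13:00}) is obtained by checking one by one that the present data satisfy Hypotheses 3.44--3.49 and conditions (3.75), (3.77) of \cite{FGS14} so that Theorem 3.50 there applies directly (with (iv)--(v) giving Hypothesis 3.44, (i) giving 3.45, (vi) giving 3.47, the $|\cdot|_{-1}$-Lipschitz estimates in (iii)--(v) giving 3.48, and an explicit computation for 3.49 with $\gamma=1$), and the existence statement (\ref{2016-10-14:02}) is similarly delegated to Theorem 3.66 of \cite{FGS14}. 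What the paper's approach buys is brevity and certainty, since the heavy lifting (Ishii--Świech lemma, radial penalization, existence of maximizers) is outsourced to a reference where it is done carefully; what your route would buy is a self-contained exposition of the inner workings of that theory.

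One technical point worth flagging in your sketch: you argue that ``bounded sets of $H$ are relatively compact in $H_{-1}$'' so that the penalized functional $\Phi_{\eps,\delta,\mu}$ attains a maximizer. This is not automatic --- $B$ coming from the Renardy decomposition under Assumption \ref{A:A} is bounded, strictly positive, and symmetric, but need not be compact, so balls of $H$ need not have compact closure in $H_{-1}$. The standard workaround (and what underlies Theorem 3.50 of \cite{FGS14} and \cite{S94}) is a variational perturbation argument (Stegall/Ekeland-type) applied to the $B$-upper-semicontinuous doubled functional, rather than a compactness argument. If you were to write out the full proof along your route, this is the step that would need the most care, together with checking that a single radial $g$ can simultaneously satisfy the class (C\ref{2016-10-17:04}) conditions, be $B$-lower semicontinuous, and dominate the polynomial growth \eqref{2016-10-14:01} --- the latter point you already identified.
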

\begin{proof}
The proof of Theorem~\ref{teo:SW} is an application of Theorems 3.50 and 3.66 in \cite{FGS14}, see the Appendix.
\end{proof}

We note the following facts.
\begin{enumerate}
\setlength\itemsep{0.05em}
\item
{The presence of the norm $|\cdot|_{-1}$ in assumptions 
\emph{(\ref{2016-10-13:06})},
\emph{(\ref{2016-10-13:01})},
\emph{(\ref{2016-10-13:03})}
of Theorem \ref{teo:SW}} is needed to exploit the $B$-continuity.
Indeed the requirement of $B$-continuity on the sub(super)solutions is needed to generate maxima and minima in the proof of comparison. In this way one is obliged to assume these stronger conditions on the coefficients to ensure the existence of solutions (see \cite{S94}).

\item Assumption \emph{(\ref{2016-10-15:00})}
  in Theorem \ref{teo:SW} is needed since, to prove uniqueness, one has to use the so-called Ishii's Lemma which allows to perform the procedure of doubling variables. {Ishii's Lemma has only a finite dimensional formulation}, so the proof is performed through finite-dimensional approximations: the condition \emph{(\ref{2016-10-15:00})} ensures the convergence of such approximations.
\end{enumerate}

{The assumptions we used in Section \ref{sec:ppde}, when  reduced to the Markovian case, are weaker than those of Theorem \ref{teo:SW}. We can conclude that, under the assumptions of Theorem \ref{teo:SW}, the two definitions of viscosity solution (Definition \ref{def:visc_sol_PPDE} and Definition \ref{def:viscst1}) select the same solution in the present Markovian case. However, as noticed, adopting our Definition \ref{def:visc_sol_PPDE} of viscosity solution requires weaker assumptions to prove that the function $\hat u$ in \eqref{hatu} is the unique solution in such sense.}
In particular:
\begin{enumerate}
\setlength\itemsep{0.05em}
\item \vskip-5pt
 The map $\sigma$ does not need to satisfy 
assumption
\emph{(\ref{2016-10-15:00})}
 of Theorem \ref{teo:SW} and  $\sigma(t,x)\in L_2(K;H)$ for all $(t,x)\in[0,T]\times H$ --- which, in the case of constant $\sigma$, would imply that $\sigma \sigma^*$ is a nuclear operator, hence reducing the applicability of the theory --- as the proof of uniqueness does not require the use of Ishii's lemma on the corresponding finite-dimensional approximations.
\item The coefficients $b$, $\sigma$, $F$, and $\xi$ do not need to be $B$-continuous with respect to $x$, as no local compactness is needed to produce local max/min in our sense.
\item The operator $A$ does not need to be maximal monotone, as radial test functions are not needed to produce local max/min in our sense.
\end{enumerate}
Roughly speaking, we can say that our definition allows to cover more general cases since the relation with the PDE is different in the following sense: the PDE is tested in analytical sense, but over test functions which satisfy the min/max condition only in a probabilistic sense and only when composed with the process $X^{t,x}$; indeed minimum (maximum) of  $\varphi-u$  is not pointwise in a neighborhood of $(t,x)$, but only in mean and when composed with the process $X^{t,x}$.

\section{On the extension to semilinear and fully nonlinear equations}
\label{S:Extension}
The notion of viscosity solution we introduced is designed for our PPDE \eqref{eq:PPDE} and needs to be suitably modified when  considering nonlinearities in the derivatives. In \cite{EkrenKellerTouziZhang}, this entails a substantial change in  the definition of viscosity solution by considering  optimal stopping problems under nonlinear expectation, i.e., under a family of probability measures. In our formalism, which separates the (fixed) probability space from the state space (see Remark \ref{rem:setting}), this corresponds to take a mixed control/stopping problem.

In the present section we investigate how and to which extent, up to now, some of the results can be extended to the case of semilinear and fully nonlinear PPDEs of Hamilton-Jacobi-Bellman type: \begin{align}
\sup_{a\in U}\big[\mathcal{L}^a u(t,\mathbf{x})
+ \ell(t,\mathbf{x},a)\big]  &=  0, \qquad \forall\, (t,\mathbf x)\in\Lambda, \ t\in[0,T), \label{eq:PPDE_fullynonlinear}
\end{align}
where $U$ is a Polish space, $\ell\colon\Lambda\times U\rightarrow\R$ is a measurable function and $\mathcal{L}^a u(t,\mathbf{x})$ will be defined in the spirit of \eqref{def:L}.

More precisely we provide:
\begin{enumerate}[-]
\setlength\itemsep{0.05em}
\item \vskip-5pt
 an existence result (Subsection \ref{SubS:Existence});
  \item a partial comparison result assuming existence for an associated stochastic optimal mixed stopping/control problem (Subsection \ref{SubS:PartialComp});
  \item the main steps of a possible path to prove a comparison result for a semilinear equation satisfying a suitable structure condition (see \eqref{structure}),  generalizing the argument used in finite dimension in [36] (Subsection \ref{SubS:Punctual}).
\end{enumerate}

PPDEs of Hamilton-Jacobi-Bellman type are naturally associated to optimal control problems. In our context the state process solves a controlled path-dependent stochastic differential equation.
We now introduce such a stochastic optimal control problem.
We define the set of admissible controls $\mathcal U$ as follows
$$\Uc\coloneqq  \{\mathbf{a}\colon[0,T]\times\Omega\rightarrow U \ \mbox{predictable}\}.$$
Let  $t\in[0,T]$,  $Z\in\mathcal{H}_\mathcal{P}^0(H)$, and $\mathbf{a}\in\Uc$. We consider the following \emph{controlled path-dependent} SDE:
\begin{equation}
\label{SHDE_control}
\begin{cases}
dX_s= AX_s ds + \bar b(s,X,\mathbf{a}_s)ds + \bar\sigma(s,X,\mathbf{a}_s)dW_s, \qquad &s\in[t,T], \\
X_{\cdot\wedge t} = Z_{\cdot\wedge t},
\end{cases}
\end{equation}
where  $A$ satisfies Assumption \ref{A:SHDE}\emph{(\ref{2016-07-13B:00})}, whereas $\bar b$ and $\bar\sigma$ satisfy the following conditions
(\footnote{Hereafter, if a function $f=f((t,\mathbf{x}),y_1,\ldots,y_j)$ depends on $(t,\mathbf{x})\in \Lambda$ and on some other variables $y_1,\ldots,y_j$, by an abuse of notation we
denote
$f((t,\mathbf{x}),y_1,\ldots,y_j)$
by
$f(t,\mathbf{x},y_1,\ldots,y_j)$.}).

\begin{Assumption}
\label{A:SHDE_control}
\quad
\begin{enumerate}[(i)]
\setlength\itemsep{0.05em}
\item\label{2016-10-17:01} \vskip-5pt
 $\bar b\colon\Lambda\times U \rightarrow H$ is measurable with respect to the Borel $\sigma$-algebra
and, for some constant $M>0$,
\begin{align}
|\bar b(t,\mathbf x,a)| & \leq  M(1 + |\mathbf x|_\infty),\\[5pt]
  |\bar b(t,\mathbf{x},a) - \bar b(t,\mathbf{x}',a)|  &\leq  M|\mathbf{x}-\mathbf{x}'|_{\infty},\label{2016-08-05:08}
\end{align}
for all $\mathbf x,\mathbf x'\in\mathbb W$, $t\in[0,T]$, $a\in U$.
\item $\bar\sigma\colon\Lambda\times U\rightarrow L(K;H)$  is such that $\bar\sigma(\cdot,\cdot,\cdot)v\colon\Lambda\times U \rightarrow H$ is measurable for all $v\in K$ and  $e^{sA}\bar\sigma(t,\mathbf x,a)\in L_2(K;H)$ for all $s>0$,  $(t,\mathbf{x})\in\Lambda$, $a\in U$. Moreover, there exist $\hat M>0$ and $\gamma\in[0,1/2)$ such that, for all $\mathbf{x},\mathbf{x}'\in \mathbb W$, $t\in[0,T]$, $s\in(0,T]$, $a\in U$,
\begin{align}
|e^{sA}\bar\sigma(t,\mathbf x,a)|_{L_2(K;H)}  &\leq  \hat M s^{-\gamma}(1 + |\mathbf{x}|_{\infty}), \\[5pt]
|e^{sA}\bar\sigma(t,\mathbf x,a) - e^{sA}\bar\sigma(t,\mathbf x',a)|_{L_2(K;H)}  &\leq  \hat M s^{-\gamma}|\mathbf{x}- \mathbf{x}'|_{\infty}.\label{2016-08-05:07}
\end{align}
\end{enumerate}
\end{Assumption}

\smallskip
Notice that,
by Remark~\ref{R:non-anticipative}\emph{(\ref{2016-06-21:01})}, we have,
for all $(t,\mathbf{x})\in \Lambda$, $a\in U$,
$$
  \bar b(t,\mathbf{x},a)=
 \bar b(t,\mathbf{x}_{\cdot\wedge t},a)\qquad
  \bar \sigma(t,\mathbf{x},a)=
 \bar \sigma(t,\mathbf{x}_{\cdot\wedge t},a).
$$

\smallskip

\begin{Definition}\label{def:2014-11-17:aa_control}
Let  $t\in[0,T]$,  $Z\in\mathcal{H}_\mathcal{P}^0(H)$, and $\mathbf{a}\in\Uc$.
We call \textbf{mild solution} of \eqref{SHDE_control} a process $X\in\mathcal{H}^0_\mathcal{P}(H)$ such that $X_{\cdot\wedge t}=Z_{\cdot\wedge t}$ and
\begin{equation}
  \label{eq:2016-08-05:06}
  X_s  =  e^{(s-t)A}Z_t + \int_t^s e^{(s-r)A}\bar b(r,X,\mathbf{a}_r)dr + \int_t^s e^{(s-r)A} \bar\sigma(r,X,\mathbf{a}_r) dW_r,
\end{equation}
for all $s\in [t,T]$.
\end{Definition}

\medskip

The proof of the following theorem is postponed in 
the Appendix.

\begin{Theorem}
\label{Exist_control}
Let Assumptions \ref{A:SHDE}(\ref{2016-07-13B:00})
 and \ref{A:SHDE_control} hold. For every $p>p^*\coloneqq \frac{2}{1-2\gamma}$, $t\in[0,T]$,  $Z\in\mathcal{H}_\mathcal{P}^p(H)$,
 and $\mathbf{a}\in\Uc$, there exists a unique mild solution $X^{t,Z,\mathbf{a}}$ to \eqref{SHDE_control}.
Moreover,    $X^{t,Z,\mathbf{a}}\in \mathcal{H}_\mathcal{P}^p(H)$ and
\begin{equation}
  \label{eq:2016-08-06:02}
  |X^{t,Z,\mathbf{a}}|_{\mathcal{H}^p_\mathcal{P}(H)}\leq \bar K_0(1 + |Z|_{\mathcal{H}^p_\mathcal{P}(H)}), \qquad
\forall \, (t,Z,\mathbf{a})\in[0,T]\times \mathcal{H}^p_\mathcal{P}(H)\times\Uc.
\end{equation}
Finally, for every $\mathbf{a}\in\Uc$, the map
\begin{equation}\label{eq:2014-10-29:aa_control}
[0,T]\times\mathcal{H}_\mathcal{P}^p(H)\to\mathcal{H}_\mathcal{P}^p(H), \ (t,Z)\mapsto X^{t,Z,\mathbf{a}}
\end{equation}
is Lipschitz continuous with respect to $Z$,  uniformly in $t\in[0,T]$ and in $\mathbf{a}\in \mathcal{U}$, and the family
\begin{equation}
  \label{eq:2016-08-05:04}
  \{X^{\cdot,\cdot,\mathbf{a}}\colon [0,T]\times \mathcal{H}^p_\mathcal{P}(H)\rightarrow \mathcal{H}^p_\mathcal{P}(H)\}_{\mathbf{a}\in \mathcal{U}}
\end{equation}
is equicontinuous.
\end{Theorem}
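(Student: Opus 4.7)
The plan is to reduce the controlled SDE to the uncontrolled path-dependent framework of Theorem~\ref{Exist} by freezing the control. Given $\mathbf{a}\in\mathcal{U}$, define random coefficients
\[
b_{\mathbf{a}}(r,\omega,\mathbf{x}) \coloneqq  \bar b(r,\mathbf{x},\mathbf{a}_r(\omega)), \qquad
\sigma_{\mathbf{a}}(r,\omega,\mathbf{x}) \coloneqq  \bar\sigma(r,\mathbf{x},\mathbf{a}_r(\omega)).
\]
Since $\mathbf{a}$ is predictable and $\bar b,\bar\sigma$ are Borel measurable, $b_{\mathbf{a}}$ and $\sigma_{\mathbf{a}}$ are progressively measurable in $(r,\omega)$; crucially, Assumption~\ref{A:SHDE_control} yields, \emph{uniformly in} $\omega$ and hence in $\mathbf{a}\in\mathcal{U}$, the same Lipschitz and growth estimates in $\mathbf{x}$ with constants $M,\hat M,\gamma$ as in Assumption~\ref{A:SHDE}\emph{(\ref{2016-10-03:00})--(\ref{2016-06-21:00})}. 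Thus \eqref{eq:2016-08-05:06} falls within the scope of the construction of \cite[Th.~3.6, Th.~3.14]{Roses2016a}, which in fact handles random coefficients satisfying such pathwise bounds.

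From here, existence and uniqueness of $X^{t,Z,\mathbf{a}}\in \mathcal{H}^p_\mathcal{P}(H)$ for every $p>p^*$ follow by the usual Banach fixed point argument on $\mathcal{H}^p_\mathcal{P}(H)$, applied to the map
\[
Y\ \longmapsto\ e^{(\cdot-t)A}Z_t + \int_t^{\cdot} e^{(\cdot-r)A}b_{\mathbf{a}}(r,Y)\,dr + \int_t^{\cdot} e^{(\cdot-r)A}\sigma_{\mathbf{a}}(r,Y)\,dW_r,
\]
combined with the factorization method for the stochastic convolution, which exploits the estimate $|e^{sA}\bar\sigma(t,\mathbf{x},a)|_{L_2(K;H)}\leq \hat M s^{-\gamma}(1+|\mathbf{x}|_\infty)$ uniformly in $a$. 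Because every constant involved (the contraction constant on a small time interval, the pasting, and the Gronwall-type a priori bound) depends only on $M,\hat M,\gamma,T,p$ and not on $\mathbf{a}$, the estimate \eqref{eq:2016-08-06:02} holds with a single constant $\bar K_0$ independent of $\mathbf{a}$, and the Lipschitz constant of $Z\mapsto X^{t,Z,\mathbf{a}}$ can be taken uniform in $(t,\mathbf{a})\in[0,T]\times\mathcal{U}$. This is precisely the argument of Theorem~\ref{Exist}, with the observation that the uniform-in-$a$ bounds of Assumption~\ref{A:SHDE_control} replace the deterministic-in-$a$ nature of the coefficients.

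Finally, for the equicontinuity of the family \eqref{eq:2016-08-05:04}, fix $(t,Z)\in[0,T]\times\mathcal{H}^p_\mathcal{P}(H)$ and estimate, for $(t',Z')$ close to $(t,Z)$,
\[
|X^{t',Z',\mathbf{a}}-X^{t,Z,\mathbf{a}}|_{\mathcal{H}^p_\mathcal{P}(H)} \leq |X^{t',Z',\mathbf{a}}-X^{t',Z,\mathbf{a}}|_{\mathcal{H}^p_\mathcal{P}(H)} + |X^{t',Z,\mathbf{a}}-X^{t,Z,\mathbf{a}}|_{\mathcal{H}^p_\mathcal{P}(H)}.
\]
The first term is bounded by a constant (independent of $(t',\mathbf{a})$) times $|Z'-Z|_{\mathcal{H}^p_\mathcal{P}(H)}$, by the uniform Lipschitz continuity just established. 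The second term collects three contributions: the semigroup-action difference $e^{(\cdot-t')A}Z_{t'}-e^{(\cdot-t)A}Z_t$, the drift integral over $[t\wedge t', t\vee t']$, and the stochastic convolution over the same interval. The first contribution depends only on $Z$ and on the strong continuity of $\{e^{tA}\}$, hence is independent of $\mathbf{a}$. The last two contributions, via the uniform bounds $|b_{\mathbf{a}}|\leq M(1+|X|_\infty)$ and the factorization estimate for the stochastic convolution, are bounded by a quantity depending on $|t-t'|$, on the a priori bound $\bar K_0(1+|Z|_{\mathcal{H}^p_\mathcal{P}(H)})$, and on $M,\hat M,\gamma$, but \emph{not} on $\mathbf{a}$. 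This yields a modulus of continuity of $(t',Z')\mapsto X^{t',Z',\mathbf{a}}$ at $(t,Z)$ that is independent of $\mathbf{a}\in\mathcal{U}$, i.e.\ equicontinuity.

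The main obstacle is the last point: ensuring that the continuity in the time variable $t$ is genuinely uniform in the control. This is where the singular estimate \eqref{2016-08-05:07} plays a crucial role, because it is only through the factorization method and the uniform-in-$a$ growth of $|e^{sA}\bar\sigma(t,\mathbf{x},a)|_{L_2(K;H)}$ that the stochastic-convolution piece admits a bound that depends on the initial datum but not on the chosen strategy. This is precisely the content of the joint-continuity part of \cite[Th.~3.14]{Roses2016a}, to which we appeal for the quantitative version of the estimates.
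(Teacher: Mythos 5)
Your overall strategy is the same as the paper's: reduce to the path-dependent framework of \cite{Roses2016a} with the control frozen, observe that Assumption~\ref{A:SHDE_control} gives Lipschitz/growth bounds with constants independent of $a\in U$ (hence all constants coming out of the Banach fixed point and factorization arguments are uniform in $\mathbf{a}$), and then split equicontinuity of $(t,Z)\mapsto X^{t,Z,\mathbf{a}}$ into the $Z$-direction (uniform Lipschitz) and the $t$-direction. Up to the $Z$-direction reduction, this matches the paper exactly.

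The gap is in your treatment of the $t$-direction. You assert that $X^{t',Z,\mathbf a}-X^{t,Z,\mathbf a}$ ``collects three contributions'': the semigroup difference $e^{(\cdot-t')A}Z_{t'}-e^{(\cdot-t)A}Z_t$, the drift integral over $[t\wedge t',t\vee t']$, and the stochastic convolution over the same interval. This decomposition is incomplete. Writing the mild formulations at a time $s>t\vee t'$ and subtracting, there is a fourth contribution
\[
\int_{t\vee t'}^s e^{(s-r)A}\bigl[\bar b(r,X^{t,Z,\mathbf a},\mathbf a_r)-\bar b(r,X^{t',Z,\mathbf a},\mathbf a_r)\bigr]\,dr
+\int_{t\vee t'}^s e^{(s-r)A}\bigl[\bar\sigma(r,X^{t,Z,\mathbf a},\mathbf a_r)-\bar\sigma(r,X^{t',Z,\mathbf a},\mathbf a_r)\bigr]\,dW_r,
\]
which encodes how the small initial discrepancy created on $[t\wedge t',t\vee t']$ propagates through the nonlinear coefficients for the remainder of the interval. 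This is precisely the part you need to estimate to close the argument, and you do not address it. The paper handles it by first estimating $\mathbb{E}[\sup_{s\in[t,t']}|\Delta X|^p]$ (this only involves your three source contributions, since on $[t,t']$ the later-started solution equals $Z$), and then invoking the flow property $X^{t,Z,\mathbf a}_s=X^{t',X^{t,Z,\mathbf a}_{t'\wedge\cdot},\mathbf a}_s$ together with the already-proven uniform Lipschitz continuity in $Z$ to bound $\mathbb{E}[\sup_{s\in[t',T]}|\Delta X|^p]$ by a constant times $|X^{t,Z,\mathbf a}_{t'\wedge\cdot}-Z_{t'\wedge\cdot}|^p_{\mathcal{H}^p_\mathcal{P}(H)}$, which is exactly the $[t,t']$ quantity. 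This flow/Lipschitz step is what makes the estimate uniform in $\mathbf{a}$ without re-running Gronwall; you should add it (or an explicit Gronwall argument with control-independent constants) to make the equicontinuity proof complete.
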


As for \eqref{flow}, we notice that
 uniqueness of mild solutions yields the flow property:
\begin{equation}\label{flow_control}
X^{t,\mathbf{x},\mathbf{a}}= X^{s,X^{t,\mathbf x,\mathbf{a}},\mathbf{a}}\mbox{ in } \mathcal{H}^p_\mathcal{P}(H), \
\forall\, (t,\mathbf{x})\in \Lambda,\ \forall\, \mathbf{a}\in \Uc,\
 \forall\,s\in[t,T].
\end{equation}

Given $(t,\mathbf x)\in\Lambda$, we consider the stochastic optimal control problem consisting in maximizing, over all admissible control processes $\mathbf{a}\in\Uc$, the following gain functional:
\begin{equation}
  \label{eq:2016-08-05:00}
  J(t,\mathbf x,\mathbf{a})  \coloneqq   \E\bigg[\int_t^T \ell(s,X^{t,\mathbf x,\mathbf{a}},\mathbf{a}_s)ds + \xi(X^{t,\mathbf x,\mathbf{a}})\bigg],
\end{equation}
where $\xi\colon\mathbb{W}\to\R$.
We define the value function $v\colon\Lambda\rightarrow\R$ of the stochastic optimal control problem:
\begin{equation}\label{value_function}
v(t,\mathbf x)  \coloneqq   \sup_{\mathbf{a}\in\Uc} J(t,\mathbf x,\mathbf{a}), \qquad \forall\,(t,\mathbf x)\in\Lambda.
\end{equation}

\subsection{Existence of viscosity solutions}
\label{SubS:Existence}

In order to prove existence of viscosity solutions to PPDE \eqref{eq:PPDE_fullynonlinear}, we introduce the following assumptions.
\begin{Assumption}\label{A:ell}
${}$
  \begin{enumerate}[(i)]
\setlength\itemsep{0.05em}
\item 
\label{2016-07-08:00}
\vskip-5pt
$\ell\colon\Lambda\times U\rightarrow\R$ is measurable;
\item \label{2016-07-08:01}  the family $\{\Lambda\rightarrow \mathbb{R},\ (t,\mathbf{x}) \mapsto \ell(t,\mathbf{x},a)\}_{a\in U}$ is equicontinuous;
\item \label{2016-07-08:02} there exists $N>0$, $p\geq 0$, such that
  \begin{equation}\label{ellep}
    |\ell(t,\mathbf{x},a)|\leq N(1+|  \mathbf{x}|_\infty^p),
\qquad  \forall\,(t,\mathbf x)\in \Lambda,\ \forall\,a\in U.
\end{equation}
\end{enumerate}
\end{Assumption}

\begin{Proposition}\label{prop:DPP}
Let Assumptions \ref{A:SHDE}(\ref{2016-07-13B:00}), 
\ref{A:SHDE_control}, \ref{A:ell} hold, 
let $\xi\in C(\mathbb{W})$,
 and let $p\geq 0$ be such that 
both 
\eqref{A:BSDE_xi} and
\eqref{ellep} hold true.
Then $v\in C_p(\Lambda)$ and satisfies the Dynamic Programming Principle:
\[
v(t,\mathbf x)  =  \sup_{\mathbf{a}\in\Uc} \E\bigg[\int_t^\tau \ell(s,X^{t,\mathbf x,\mathbf{a}},\mathbf{a}_s)ds + v(\tau,X^{t,\mathbf x,\mathbf{a}})\bigg],
\]
for all
$(t,\mathbf x)\in\Lambda$,
$\tau \in \Tc$ with $\tau\geq t$.
\end{Proposition}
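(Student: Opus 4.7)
My plan is to establish the proposition in three parts: the growth bound and joint continuity of $v$, followed by each of the two inequalities of the DPP.

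First, for the growth and membership in $C_p(\Lambda)$, I would fix any exponent $p'>p^*$ with $p'\ge p$ and rely on the uniform estimate \eqref{eq:2016-08-06:02}, which gives $|X^{t,\mathbf x,\mathbf a}|_{\mathcal H^{p'}_{\mathcal P}(H)}\le \bar K_0(1+|\mathbf x|_\infty)$ independently of $\mathbf a\in\mathcal U$. Combining this with the polynomial growth of $\ell$ from Assumption~\ref{A:ell}(\ref{2016-07-08:02}) and of $\xi\in C_p(\mathbb W)$, one obtains a bound $|J(t,\mathbf x,\mathbf a)|\le C(1+|\mathbf x|_\infty^p)$ uniform in $\mathbf a$, so that $v$ inherits the $C_p$ growth.

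Second, for continuity, take $(t_n,\mathbf x_n)\to(t,\mathbf x)$ with respect to $\mathbf d_\infty$. The key input is the equicontinuity of the family \eqref{eq:2016-08-05:04} from Theorem~\ref{Exist_control}, which yields $X^{t_n,\mathbf x_n,\mathbf a}\to X^{t,\mathbf x,\mathbf a}$ in $\mathcal H^{p'}_{\mathcal P}(H)$ \emph{uniformly} in $\mathbf a\in\mathcal U$. Extracting a subsequence converging $\P$-a.s.\ in $\mathbb W$, the continuity of $\xi$ and the equicontinuity of $\{\ell(\cdot,\cdot,a)\}_{a\in U}$ (Assumption~\ref{A:ell}(\ref{2016-07-08:01})), combined with the $\mathcal H^{p'}$-bound as a dominating sequence, allow one to invoke the generalized dominated convergence (Lemma~\ref{GD}) and conclude $J(t_n,\mathbf x_n,\mathbf a)\to J(t,\mathbf x,\mathbf a)$ uniformly in $\mathbf a$. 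Since $|\sup_{\mathbf a}J(t_n,\mathbf x_n,\mathbf a)-\sup_{\mathbf a}J(t,\mathbf x,\mathbf a)|\le \sup_{\mathbf a}|J(t_n,\mathbf x_n,\mathbf a)-J(t,\mathbf x,\mathbf a)|$, the continuity of $v$ follows.

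Third, for the easy inequality ``$\le$'' in the DPP, I would fix $\mathbf a\in\mathcal U$ and a stopping time $\tau\ge t$, use the flow property \eqref{flow_control}, and condition on $\mathcal F_\tau$ via a disintegration argument analogous to the one already used in the proofs of Theorem~\ref{lemma:pp1} and Proposition~\ref{propp:2014-05-09:aa}: writing $J(t,\mathbf x,\mathbf a)=\E\bigl[\int_t^\tau \ell(s,X^{t,\mathbf x,\mathbf a},\mathbf a_s)\,ds+\E[\,\cdot\mid \mathcal F_\tau]\bigr]$, the inner conditional expectation is bounded by $v(\tau,X^{t,\mathbf x,\mathbf a})$, and taking $\sup_{\mathbf a}$ gives the inequality.

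The reverse inequality ``$\ge$'' is the main obstacle. One needs, for each $\varepsilon>0$, to paste an $\varepsilon$-optimal control after time $\tau$ to an arbitrary base control, and to check that the resulting process is predictable and attains the claimed lower bound. My plan is to exploit the continuity of $v$ obtained above: cover $\Lambda$ by countably many $\mathbf d_\infty$-open sets on which $v$ oscillates less than $\varepsilon$, select within each a single $\varepsilon$-optimal control, and paste by means of a countable measurable partition of $\{(\tau(\omega),X^{t,\mathbf x,\mathbf a}(\omega))\}\subset \Lambda$, relying on the $\mathcal F_\tau$-measurability of $X^{t,\mathbf x,\mathbf a}_{\cdot\wedge\tau}$. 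Verifying predictability of the pasted control and applying the flow property and tower property then yields $J(t,\mathbf x,\mathbf a^\varepsilon)\ge \E[\int_t^\tau\ell\,ds+v(\tau,X^{t,\mathbf x,\mathbf a})]-\varepsilon$; taking $\sup_{\mathbf a}$ and letting $\varepsilon\downarrow 0$ concludes. The delicate point is precisely the measurable selection / concatenation of controls in this infinite-dimensional path-dependent framework, for which one would need to adapt the standard arguments of Krylov or Fleming--Soner; this is the reason why the authors prefer to state the DPP without proof.
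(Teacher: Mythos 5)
Your growth bound argument matches the paper's, and your sketch of the two DPP inequalities (flow property and conditioning for ``$\le$''; pasting $\varepsilon$-optimal controls via a countable measurable partition for ``$\ge$'') reflects what the cited argument in \cite[Sec.\ 2.3]{FGS14} would entail; the paper itself does not prove the DPP but explicitly refers to that source once continuity is in hand, so on this part you are in line with the authors' intent and you correctly identify the measurable concatenation of controls as the delicate step.

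The continuity argument, however, has a gap as written. You assert that equicontinuity of $\{X^{\cdot,\cdot,\mathbf a}\}_{\mathbf a}$ in $\mathcal H^{p'}_{\mathcal P}(H)$, a subsequence extraction, and Lemma~\ref{GD} yield $J(t_n,\mathbf x_n,\mathbf a)\to J(t,\mathbf x,\mathbf a)$ \emph{uniformly} in $\mathbf a$. This does not follow from those steps alone: for a fixed $\mathbf a$, the $\mathcal H^{p'}$-convergence allows a $\P$-a.s.\ convergent subsequence and dominated convergence gives pointwise (in $\mathbf a$) convergence of $J$, but both the subsequence and the Lebesgue-point argument may depend on $\mathbf a$, so uniformity over $\mathcal U$ is not automatic. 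To repair it, you would argue by contradiction: suppose there exist $\varepsilon>0$ and controls $\mathbf a_n$ with $|J(t_n,\mathbf x_n,\mathbf a_n)-J(t,\mathbf x,\mathbf a_n)|\ge\varepsilon$; since $\sup_{\mathbf a}|X^{t_n,\mathbf x_n,\mathbf a}-X^{t,\mathbf x,\mathbf a}|_{\mathcal H^{p'}_{\mathcal P}(H)}\to 0$, one has $|X^{t_n,\mathbf x_n,\mathbf a_n}-X^{t,\mathbf x,\mathbf a_n}|_{\mathcal H^{p'}_{\mathcal P}(H)}\to 0$, so along a further subsequence the paths converge $\P$-a.s., and then the equicontinuity of $\{\ell(\cdot,\cdot,a)\}_{a}$, the continuity of $\xi$, the uniform moment bound \eqref{eq:2016-08-06:02}, and Lemma~\ref{GD} give the desired contradiction. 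The paper sidesteps this issue entirely: rather than proving uniform convergence of $J$, it fixes $\varepsilon>0$, takes $\varepsilon$-optimal controls $\mathbf a^{n,\varepsilon}$ for $v(t_n,\mathbf x_n)$, and shows $J(t_n,\mathbf x_n,\mathbf a^{n,\varepsilon})-J(t,\mathbf x,\mathbf a^{n,\varepsilon})\to 0$ along a subsequence by exactly the ingredients above, yielding $\limsup_n v(t_n,\mathbf x_n)\le v(t,\mathbf x)+\varepsilon$; the reverse inequality then uses a single fixed $\varepsilon$-optimal control for $(t,\mathbf x)$ and is easier. Both routes rest on the same inputs, but the paper's avoids claiming a uniformity that must be argued separately.
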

\begin{proof}

Once one proves that the value function is continuous, the proof of the
Dynamic Programming Principle can be done following the same path of \cite[Sec.2.3]{FGS14} (\footnote{In \cite{FGS14} this is proved for deterministic times $\tau$. If $\tau$ is a stopping time, the proof can be obtained by an approximation procedure by discrete valued stopping times as usual.}).
So, we only prove that $v\in C_p(\Lambda)$.

The $p$-polynomial growth of $v$ is straightforward from
 Theorem~\ref{Exist_control},
Assumption~\ref{A:ell}\emph{(\ref{2016-07-08:02})},
and 
$\xi\in C_p(\mathbb{W})$.
Moreover, $v$ is clearly non-anticipative.
 By non-anticipativity, the continuity of $v$ with respect to $\textbf{d}_\infty$ is equivalent to the continuity  with respect to  $|\cdot|+|\cdot|_\infty$.
So,  letting $(t_n,\mathbf{x}_n)\to (t,\mathbf{x})$ with respect to $|\cdot|+|\cdot|_\infty$, we need to  prove that $v(t_n,\mathbf{x}_n)\to v(t,\mathbf{x})$.
Let $\epsilon>0$.
 For $n\in \mathbb{N}$, let $\mathbf{a}^{n,\epsilon}$ be an $\epsilon$-optimal control for $v(t_n,\mathbf{x}_n)$.
We can
write
\begin{equation}\label{2016-08-05:10}
  \begin{split}
 v(t_n,\mathbf{x}_n)-
   v(t,&\mathbf{x})
-\epsilon  \leq 
    J(t_n,\mathbf{x}_n,\mathbf{a}^{n,\epsilon})-
    J(t,\mathbf{x},\mathbf{a}^{n,\epsilon})\\
\leq&
   \E\bigg[
   \int_{t_n}^T \ell(s,X^{t_n,\mathbf x_n,\mathbf{a}^{n,\epsilon
}},\mathbf{a}^{n,\epsilon}_s)ds
-
 \int_{t}^T \ell(s,X^{t,\mathbf x,\mathbf{a}^{n,\epsilon}},\mathbf{a}^{n,\epsilon}_s)ds\bigg]\\
& +\mathbb{E}\bigg[ \xi(X^{t_n,\mathbf x_n,\mathbf{a}^{n,\epsilon}})
- \xi(X^{t,\mathbf x,\mathbf{a}^{n,\epsilon}})
\bigg].
  \end{split}
\end{equation}
By Theorem~\ref{Exist_control}, the family $\{X^{\cdot,\cdot,\mathbf{a}}\colon
[0,T]\times \mathbb{W}\rightarrow
 \mathcal{H}^p_\mathcal{P}(H)
\}_{\mathbf{a}\in \mathcal{U}}$ is equicontinuous.
Then,
passing to a subsequence
again denoted by $\{(t_n,\mathbf{x}_n,\mathbf{a}^{n,\epsilon})\}_{n\in \mathbb{N}}$
 if necessary,
\begin{equation*}
\lim_{n\rightarrow \infty}
|X^{t,\mathbf{x},\mathbf{a}^{n,\epsilon}}-
  X^{t_n,\mathbf{x}_n,\mathbf{a}^{n,\epsilon}}|_\infty
=0\qquad \mathbb{P}\mbox{-a.e.,}
\end{equation*}
hence, by
Assumption~\ref{A:ell}\emph{(\ref{2016-07-08:01})},
for all  $s\in[0,T]\setminus \{t\}$
\begin{equation}
  \label{eq:2016-08-05:09}
\lim_{n\rightarrow \infty}|\mathbf{1}_{[t_n,T]}(s)\ell(s,X^{t_n,\mathbf x_n,\mathbf{a}^{n,\epsilon
}},\mathbf{a}^{n,\epsilon}_s)
-
\mathbf{1}_{[t,T]}(s)\ell(s,X^{t,\mathbf x,\mathbf{a}^{n,\epsilon
}},\mathbf{a}^{n,\epsilon}_s)|
=0
\end{equation}
$\mathbb{P}\mbox{-a.e.}$,
and,
since $\xi\in C_p(\mathbb{W})$,
\begin{equation}
\label{axi}
\lim_{n\rightarrow \infty}|\xi(X^{t_n,\mathbf x_n,\mathbf{a}^{n,\epsilon
}})
-
\xi(X^{t,\mathbf x,\mathbf{a}^{n,\epsilon
}})|
=0\qquad \mathbb{P}\mbox{-a.e..}
\end{equation}
By Theorem~\ref{Exist_control}, the family $\{X^{t_n,\mathbf{x}_n,\mathbf{a}}\}_{\mathbf{a}\in \mathcal{U}}\subset \mathcal{H}^{p'}_\mathcal{P}(H)$ is bounded for any $p'>p$, hence it is uniformly integrable in $L^p((\Omega,\mathcal{F}_T,\mathbb{P}),\mathbb{W})$.
Then,
taking also into account
Assumption~\ref{A:ell}\emph{(\ref{2016-07-08:02})}
and  that $\xi\in C_p(\mathbb{W})$,
we can
pass to the limit in
\eqref{2016-08-05:10} and obtain
\begin{equation}
  \label{eq:2016-08-05:11}
  \limsup_{n\rightarrow \infty} v(t_n,\mathbf{x}_n)
 \leq   v(t,\mathbf{x})
+\epsilon.
\end{equation}
On the other hand, letting $\mathbf{a}^\epsilon$ be an $\epsilon$-optimal control for $v(t,\mathbf{x})$, we have
\begin{equation*}
 v(t_n,\mathbf{x}_n)-
   v(t,\mathbf{x})
+\epsilon  \geq
J(t_n,\mathbf{x}_n,\mathbf{a}^{\epsilon})-
    J(t,\mathbf{x},\mathbf{a}^{\epsilon})
\end{equation*}
and by arguing as above (here it is even simpler as $\mathbf{a}^\varepsilon$ is fixed), we obtain
\begin{equation}
  \label{eq:2016-08-05:12}
  \liminf_{n\rightarrow \infty} v(t_n,\mathbf{x}_n)\ge
   v(t,\mathbf{x})
 -\epsilon.
\end{equation}
Since $\epsilon>0$ was arbitrary,
\eqref{eq:2016-08-05:11}
and
\eqref{eq:2016-08-05:12} provide the continuity of $v$.
\end{proof}

\begin{Definition}\label{def:smooth_functions_control}
We say that $u\in C^{1,2}_{X}(\Lambda)$ if $u\in \cpol(\Lambda)$ and, for all $(t,\mathbf x)\in\Lambda$, $s\in[t,T]$, $\mathbf{a}\in\Uc$,
\begin{equation}
\label{eq:fct_Ito_control}
du(s,X^{t,\mathbf x,\mathbf{a}})  =  \bar\alpha(s,X^{t,\mathbf x,\mathbf{a}},\mathbf{a}_s)ds + \langle \bar\beta(s,X^{t,\mathbf x,\mathbf{a}},\mathbf{a}_s),dW_s\rangle_{K},
\end{equation}
for some
measurable functions
$\bar\alpha\colon\Lambda\times U\rightarrow\R$, $\bar\beta\colon\Lambda\times U\rightarrow K$, such that
$\{\bar \alpha (\cdot,\cdot,a)\}_{a\in U}$ is equicontinuous and
\[
|\bar\alpha(t,\mathbf x,a)| + |\bar\beta(t,\mathbf x,a)|_K  \leq  \bar M\big(1 + |\mathbf x|_\infty^p\big), \qquad \forall\,(t,\mathbf x)\in\Lambda,\
\forall\, a\in U,
\]
for some constants $\bar M\geq 0$, $p\geq 0$.
\end{Definition}

By identifying the finite variation part and the Brownian part in \eqref{eq:fct_Ito_control}, we see that $\bar\alpha$ and $\bar\beta$ in Definition \ref{def:smooth_functions_control} are uniquely determined.
 Given $u\in C^{1,2}_{X}(\Lambda)$, following \eqref{def:L} we denote
\begin{equation}\label{operatorL^a}
\Lc^a u(t,\mathbf x) \coloneqq   \bar\alpha(t,\mathbf x,a), \qquad \forall\,(t,\mathbf x)\in\Lambda,\ \forall\,a\in U.
\end{equation}

We now provide the definition of viscosity subsolution/supersolution of equation \eqref{eq:PPDE_fullynonlinear}. In order to do that, we redefine the two classes of test functions  $\underline{\Ac} u(t,\mathbf{x})$ and $\overline{\Ac} u(t,\mathbf{x})$ accordingly to the present controlled case. Given $u\in \cpol(\Lambda)$  we define, for $(t,\mathbf{x})\in \Lambda$, $t\in[0,T)$,
\begin{multline*}
\underline{\Ac} u(t,\mathbf{x}) \defeq  \Big\{ \varphi \in C^{1,2}_{X}(\Lambda) \colon \text{there exists }\H\in\mathcal{T},\ \H>t, \mbox{ such that,}\\
\mbox{for all }\mathbf{a}\in\Uc, 
(\varphi-u)(t,\mathbf{x}) = \min_{\tau \in \Tc,\,\tau\geq t}\E\big[ (\varphi-u)(\tau\wedge\H,X^{t,\mathbf{x},\mathbf{a}}) \big] \Big\},
\end{multline*}
\begin{multline*}
\overline{\Ac} u(t,\mathbf{x}) \defeq  \Big\{ \varphi \in C^{1,2}_{X}(\Lambda) \colon\text{there exists }\H\in\mathcal{T},\ \H>t, \mbox{ such that,}\\
\mbox{for all }\mathbf{a}\in\Uc, 
(\varphi-u)(t,\mathbf{x}) = \max_{\tau \in \Tc,\,\tau\geq t}\E\big[ (\varphi-u)(\tau\wedge\H,X^{t,\mathbf{x},\mathbf{a}}) \big] \Big\}.
\end{multline*}

\begin{Remark}
Notice that $\underline{\Ac} u(t,\mathbf{x})$ and $\overline{\Ac} u(t,\mathbf{x})$ can be  written in the following equivalent form:
\begin{multline*}
\underline{\Ac} u(t,\mathbf{x}) \defeq  \Big\{ \varphi \in C^{1,2}_{X}(\Lambda) \colon \text{there exists }\H\in\mathcal{T},\ \H>t, \mbox{ such that} \\
(\varphi-u)(t,\mathbf{x}) = \inf_{\mathbf{a}\in\Uc}\min_{\tau \in \Tc,\,\tau\geq t}\E\big[ (\varphi-u)(\tau\wedge\H,X^{t,\mathbf{x},\mathbf{a}}) \big] \Big\},
\end{multline*}
\begin{multline*}
\overline{\Ac} u(t,\mathbf{x}) \defeq  \Big\{ \varphi \in C^{1,2}_{X}(\Lambda) \colon\text{there exists }\H\in\mathcal{T},\ \H>t, \mbox{ such that} \\
(\varphi-u)(t,\mathbf{x}) = \sup_{\mathbf{a}\in\Uc}\max_{\tau \in \Tc,\,\tau\geq t}\E\big[ (\varphi-u)(\tau\wedge\H,X^{t,\mathbf{x},\mathbf{a}}) \big] \Big\}.
\end{multline*}
\end{Remark}

\begin{Definition} \label{def:visc_sol_PPDE_semi}
Let $u\in \cpol(\Lambda)$.
\begin{enumerate}[(i)]
\setlength\itemsep{0.05em}
\item 
\vskip-5pt
 We say that $u$ is a \textbf{viscosity subsolution} (resp.\ \textbf{supersolution}) of the path-dependent PDE~\eqref{eq:PPDE_fullynonlinear} if
\begin{equation*}
  - \sup_{a\in U}\big[\mathcal{L}^a \varphi(t,\mathbf{x})
+ \ell(t,\mathbf{x},a)\big]
\leq 0, \quad (\mbox{resp.}\ \ge 0)
\end{equation*}
for all $(t,\mathbf{x}) \in \Lambda$, $t<T$, and all $\varphi \in \Acb u(t, \mathbf{x})$ (resp.\ $\varphi \in \Acu u(t, \mathbf{x})$).
\item
 We say that $u$ is a \textbf{viscosity solution} of the path-dependent PDE~\eqref{eq:PPDE_fullynonlinear} if it is both a viscosity subsolution and a viscosity supersolution.
\end{enumerate}
\end{Definition}

\begin{Proposition}\label{T:Equivalence}
Let Assumptions \ref{A:SHDE}(\ref{2016-07-13B:00}), \ref{A:SHDE_control}, \ref{A:ell} hold, and let $\xi\in C_p(\mathbb{W})$, for some $p\geq 0$.
Then $v$ is a viscosity solution of PPDE \eqref{eq:PPDE_fullynonlinear}.
\end{Proposition}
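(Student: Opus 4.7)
The plan is to prove the viscosity subsolution and supersolution properties of $v$ separately, both by combining the Dynamic Programming Principle of Proposition~\ref{prop:DPP} with the functional It\^o identity
\begin{equation*}
\E\big[\varphi(\tau, X^{t,\mathbf{x},\mathbf{a}})\big] - \varphi(t,\mathbf{x}) = \E\bigg[\int_t^{\tau} \mathcal{L}^{\mathbf{a}_s}\varphi(s,X^{t,\mathbf{x},\mathbf{a}})\,ds\bigg]
\end{equation*}
which is built into the definition of $C^{1,2}_X(\Lambda)$ via \eqref{eq:fct_Ito_control} and \eqref{operatorL^a} and which is valid for every stopping time $\tau\in\Tc$ with $t\leq\tau\leq T$. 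In both directions I would test against $\tau_h\coloneqq (t+h)\wedge\H$ for small $h>0$, chain the test-function inequality with the DPP, rearrange, divide by $h$ and let $h\to 0^+$.

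For the supersolution part I would fix $\varphi\in\overline{\Ac}v(t,\mathbf{x})$ with associated $\H>t$, test the DPP with the constant control $\mathbf{a}_s\equiv a\in U$, and combine the lower bound $v(t,\mathbf{x})-\E[v(\tau_h,X^{t,\mathbf{x},a})]\geq \E[\int_t^{\tau_h}\ell(s,X^{t,\mathbf{x},a},a)\,ds]$ coming from the DPP with the upper bound $v(t,\mathbf{x})-\E[v(\tau_h,X^{t,\mathbf{x},a})]\leq \varphi(t,\mathbf{x})-\E[\varphi(\tau_h,X^{t,\mathbf{x},a})]$ coming from the maximum property of the test function. The It\^o identity above then produces
\begin{equation*}
\E\bigg[\int_t^{\tau_h}\big(\mathcal{L}^a\varphi(s,X^{t,\mathbf{x},a}) + \ell(s,X^{t,\mathbf{x},a},a)\big)\,ds\bigg]\leq 0;
\end{equation*}
dividing by $h$ and letting $h\to 0^+$, using continuity of $\ell(\cdot,\cdot,a)$ and of $\mathcal{L}^a\varphi$ together with $X^{t,\mathbf{x},a}_t=\mathbf{x}_t$, yields $\mathcal{L}^a\varphi(t,\mathbf{x})+\ell(t,\mathbf{x},a)\leq 0$ for every $a\in U$, which is the desired supersolution inequality.

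For the subsolution direction I would fix $\varphi\in\underline{\Ac}v(t,\mathbf{x})$ and $\epsilon>0$, and invoke the DPP in its ``$\leq$'' form to pick an $\epsilon h$-optimal control $\mathbf{a}^{\epsilon,h}\in\Uc$ such that
\begin{equation*}
v(t,\mathbf{x})-\E\big[v(\tau_h,X^{t,\mathbf{x},\mathbf{a}^{\epsilon,h}})\big] \leq \epsilon h + \E\bigg[\int_t^{\tau_h}\ell(s,X^{t,\mathbf{x},\mathbf{a}^{\epsilon,h}},\mathbf{a}^{\epsilon,h}_s)\,ds\bigg].
\end{equation*}
The minimum property of $\varphi$ applied with this same $\mathbf{a}^{\epsilon,h}$ provides the matching lower bound $v(t,\mathbf{x})-\E[v(\tau_h,X^{t,\mathbf{x},\mathbf{a}^{\epsilon,h}})]\geq -\E[\int_t^{\tau_h}\mathcal{L}^{\mathbf{a}^{\epsilon,h}_s}\varphi(s,X^{t,\mathbf{x},\mathbf{a}^{\epsilon,h}})\,ds]$, and combining the two and bounding pointwise by the Hamiltonian yields
\begin{equation*}
\frac{1}{h}\E\bigg[\int_t^{\tau_h}H(s,X^{t,\mathbf{x},\mathbf{a}^{\epsilon,h}})\,ds\bigg] \geq -\epsilon, \qquad H(s,\mathbf{y})\coloneqq\sup_{a\in U}\big\{\mathcal{L}^a\varphi(s,\mathbf{y})+\ell(s,\mathbf{y},a)\big\}.
\end{equation*}
Passing to the limit $h\to 0^+$ would give $H(t,\mathbf{x})\geq -\epsilon$, and sending $\epsilon\downarrow 0$ would then yield the subsolution inequality $H(t,\mathbf{x})\geq 0$.

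The main obstacle is precisely the $h\to 0^+$ limit in the subsolution step, because the control $\mathbf{a}^{\epsilon,h}$ varies with $h$ and a priori the paths $X^{t,\mathbf{x},\mathbf{a}^{\epsilon,h}}$ could behave very differently for different $h$. To handle it I would use two ingredients: first, the equicontinuity of the family $\{\mathcal{L}^a\varphi\}_{a\in U}$ built into Definition~\ref{def:smooth_functions_control}, combined with Assumption~\ref{A:ell}\emph{(\ref{2016-07-08:01})}, which together imply that the Hamiltonian $H$ is continuous on $\Lambda$; second, the uniform estimate $\lim_{h\to 0^+}\sup_{\mathbf{a}\in\Uc}\E[\sup_{s\in[t,t+h]}|X^{t,\mathbf{x},\mathbf{a}}_s-\mathbf{x}_t|]=0$, which follows from the mild-formulation bound \eqref{eq:2016-08-06:02} and the equicontinuity of the family \eqref{eq:2016-08-05:04} in Theorem~\ref{Exist_control}. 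Together these force $H(s,X^{t,\mathbf{x},\mathbf{a}^{\epsilon,h}}_{\cdot\wedge s})\to H(t,\mathbf{x})$ sufficiently uniformly in $\mathbf{a}^{\epsilon,h}$ to conclude via dominated convergence, the dominating function being supplied by the polynomial-growth bounds on $\mathcal{L}^a\varphi$ and $\ell$ combined with \eqref{eq:2016-08-06:02}.
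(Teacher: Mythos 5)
Your proposal is correct and follows essentially the same route as the paper: for the supersolution inequality, test the DPP along constant controls and chain with the maximum property of $\varphi$ and the functional It\^o identity; for the subsolution inequality, pick an $\epsilon h$-optimal control $\mathbf{a}^{\epsilon,h}$ (the paper couples the tolerance to the time step by choosing an $\epsilon^2$-optimal control over $[t,(t+\epsilon)\wedge\H]$, a purely cosmetic bookkeeping difference), pass to the Hamiltonian pointwise via its uniform continuity (coming from the equicontinuity of $\{\mathcal{L}^a\varphi\}_{a\in U}$ and of $\{\ell(\cdot,\cdot,a)\}_{a\in U}$), and control the $h\to 0^+$ limit uniformly in the control via the equicontinuity of the family \eqref{eq:2016-08-05:04} from Theorem \ref{Exist_control}. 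These are exactly the ingredients the paper uses (cf.\ the limit \eqref{limite} and the paragraph around it), so the two arguments coincide in substance.
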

\begin{proof}
\emph{Supersolution property.} Let $(t,\mathbf{x})\in\Lambda$ with $t<T$. By the Dynamic Programming Principle, we have, for every
 constant control ${\mathbf{a}}\equiv a\in U$ and $s\in(t,T]$,
\begin{equation}\label{eq:DDp}
v(t,\mathbf x)  \geq  \E\bigg[\int_t^s \ell(r,X^{t,\mathbf x,\mathbf{a}},a)dr + v(s,X^{t,\mathbf x,\mathbf{a}})\bigg].
\end{equation}
   Let $\varphi\in\overline{\mathcal{A}}v(t,\mathbf{x})$. Then, starting from \eqref{eq:DDp} and  arguing as in the proof of
Theorem \ref{lemma:pp1}, we get
 \begin{equation*}
  - \big[\mathcal{L}^a \varphi(t,\mathbf{x})
+ \ell(t,\mathbf{x},a)\big]
\ge 0, \ \ \ \forall\,a\in U.
\end{equation*}
Taking the infimum over $a\in U$, we get the claim.

\vspace{1mm}

\emph{Subsolution property.}
The following proof is inspired by the proof of the supersolution property in Theorem 3.66 of \cite{FGS14}. Let $(t,\mathbf{x})\in\Lambda$ with $t<T$ and $\varphi \in \Acb v(t, \mathbf{x})$. Let $\H\in\mathcal T$, $\H>t$, denote a stopping time associated to $\varphi$ as required in the definition of $\Acb v(t, \mathbf{x})$. Without loss of generality, we can suppose that $(\varphi-v)(t,\mathbf x)=0$. By  Dynamic Programming Principle, for every $\eps>0$ there exists a control $\mathbf a^\eps\in\mathcal U$ such that
\[
v(t,\mathbf{x})-\varepsilon^2  \leq  \E\bigg[\int_t^{(t+\eps)\wedge\H} \ell(r,X^{t,\mathbf{x},\mathbf{a}^\varepsilon},\mathbf{a}^\varepsilon_r)dr + v\big((t+\eps)\wedge\H,X^{t,\mathbf x,\mathbf{a}^\eps}\big)\bigg].
\]
This implies that
\[
\varphi(t,\mathbf{x})-\varepsilon^2  \leq  \E\bigg[\int_t^{(t+\eps)\wedge\H} \ell(r,X^{t,\mathbf{x},\mathbf{a}^\varepsilon},\mathbf{a}^\varepsilon_r)dr + \varphi\big((t+\eps)\wedge\H,X^{t,\mathbf x,\mathbf{a}^\eps}\big)\bigg].
\]
Since $\varphi \in C^{1,2}_{X}(\Lambda)$, by Definition \ref{def:smooth_functions_control} and \eqref{operatorL^a} we can write
\begin{equation}\label{proof_existence_eps^2}
-\varepsilon^2  \leq  \E\bigg[\int_t^{(t+\eps)\wedge\H} \big[
\mathcal{L}^{\mathbf a_r^\eps} \varphi(r,X^{t,\mathbf x,\mathbf{a}^\eps}) + \ell(r,X^{t,\mathbf{x},\mathbf{a}^\varepsilon},\mathbf{a}^\varepsilon_r)\big]dr\bigg].
\end{equation}
Notice that $\sup_{a\in U}\big[
\mathcal{L}^{a} \varphi(\cdot,\cdot) + \ell(\cdot,
\cdot,a)\big]$
is uniformly continuous, hence measurable, and then
\eqref{proof_existence_eps^2} implies
\begin{equation}\label{proof_existence_eps^2bis}
-\varepsilon^2  \leq  \E\bigg[\int_t^{(t+\eps)\wedge\H} \sup_{a\in U}\big[
\mathcal{L}^{a} \varphi(r,X^{t,\mathbf x,\mathbf{a}^\eps}) + \ell(r,X^{t,\mathbf{x},\mathbf{a}^\varepsilon},a)\big]dr\bigg].
\end{equation}
Now,
by equicontinuity of
$\{X^{\cdot,\cdot,\mathbf{a}}\colon [0,T]\times \mathbb{W}\rightarrow \mathcal{H}^p_\mathcal{P}(H)\}_{\mathbf{a}\in \mathcal{U}}$,
claimed in
 Theorem \ref{Exist_control},
we can write
\begin{equation}\label{limite}
  \begin{split}
          \lim_{\eps\rightarrow 0^+}\sup_{\mathbf a\in\mathcal U}\E&\bigg[\sup_{r\in[t,t+\eps]}\big|X_r^{t,\mathbf{x},\mathbf a} - \mathbf x_t\big|^p\bigg]
=\\
&=      \lim_{\eps\rightarrow 0^+}
\sup_{\mathbf a\in\mathcal U}\E\bigg[\sup_{r\in[t,t+\eps]}\big|X_r^{t,\mathbf{x},\mathbf a} - X_r^{t+\epsilon,\mathbf{x}_{t\wedge \cdot},\mathbf{a}}\big|^p\bigg]\\
&\leq
\lim_{\eps\rightarrow 0^+}
\sup_{\mathbf a\in\mathcal U}
\left|
X^{t,\mathbf{x}_{t\wedge \cdot},\mathbf a}
 -
 X^{t+\epsilon,\mathbf{x}_{t\wedge \cdot},\mathbf{a}}\right|_{\mathcal{H}^p_\mathcal{P}(H)}^p=0.
\end{split}
\end{equation}
Dividing \eqref{proof_existence_eps^2bis} by $\varepsilon$, letting $\varepsilon\to 0^+$,  using \eqref{limite},
and recalling the uniform continuity of
$\sup_{a\in U}\big[
\mathcal{L}^{a} \varphi(\cdot,\cdot) + \ell(\cdot,
\cdot,a)\big]$,
we conclude
$$
  \sup_{a\in U}
\{\mathcal{L}^a\varphi(t,\mathbf{x})
+\ell(t,\mathbf{x},a)
\}\geq 0.\eqno\qed
$$
\let\qed\relax
\end{proof}

\subsection{Partial comparison}
\label{SubS:PartialComp}

Hereafter, in this section, we assume that Assumptions~\ref{A:SHDE}\emph{(\ref{2016-07-13B:00})},
\ref{A:SHDE_control}, and
 \ref{A:ell}
hold. 

In order to prove a (partial) comparison result, we need a counterpart of Theorem \ref{teo:OS}. In this case, we would need to deal with a mixed optimal control/stopping problem. The problem of existence of solution for the latter is provided in \cite{RTZ14} passing through arguments strongly relying on the finite dimensionality of the problem, notably 
the local compactenss,
which fails in infinite dimension.  For this reason, here we leave out the treatment of such difficult issue and take such existence result as an assumption (Assumption \ref{2016-07-11:00} below; but it should be considered, rather, as a key middle step towards the comparison result).

\begin{Assumption}\label{2016-07-11:00}
Let $\phi\in \cpol(\Lambda)$ and let $s\in[0,T]$. Let $\Phi\colon \Lambda\rightarrow \mathbb{R}$ be  defined by
\begin{equation*}
  \Phi(t',\mathbf{x}')
\coloneqq
\sup_{\substack{
\tau\in \mathcal{T},\,\,\,  \tau \geq t'\\
\mathbf{a}\in \mathcal{U}
}
}  \mathbb{E} \left[ \phi(\tau,X^{t',\mathbf{x}',\mathbf{a}})
 \right]\qquad \forall\, (t',\mathbf{x}')\in\Lambda.
\end{equation*}
For every $(t,\mathbf{x})\in \Lambda$ there exist
$\tau^*\in \mathcal{T}$, with $ \tau^*\geq t$,  and $\mathbf{a}^*\in \mathcal{U}$, such that
\begin{equation}\label{2016-07-11:02}
  \begin{split}
    \Phi(t,\mathbf{x})&=
  \mathbb{E} \left[ \phi(\tau^*,X^{t,\mathbf{x},\mathbf{a}^*})
 \right]\\
\phi(\tau^*,X^{t,\mathbf{x},\mathbf{a}^*})&=
    \Phi(\tau^*,X^{t,\mathbf{x},\mathbf{a}^*})\qquad \mathbb{P}\mbox{-a.s.}
\end{split}
\end{equation}
\end{Assumption}

\begin{Proposition}[Partial comparison principle]\label{prop:PC}
Let Assumption
\ref{2016-07-11:00} hold.
 Let $u,v\in \cpol(\Lambda)$ be, respectively, a viscosity subsolution and a viscosity supersolution to
\eqref{eq:PPDE_fullynonlinear}.
If $u(T,\cdot)\leq v(T,\cdot)$  on $\mathbb{W}$ and either $u\in C^{1,2}_{X}(\Lambda)$ or $v\in C^{1,2}_{X}(\Lambda)$, then $u\leq v$ on $\Lambda$.
\end{Proposition}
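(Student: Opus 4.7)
The plan is to adapt, in the controlled/mixed setting, the test-function construction of Lemma~\ref{lemma:pp} (which underpins the equivalence $(iii)\Leftrightarrow(i)$ of Theorem~\ref{lemma:pp1}), using the smoothness of one of the two functions as a ``free'' test function for the other. Without loss of generality, suppose $v\in C^{1,2}_{X}(\Lambda)$ (the case $u\in C^{1,2}_{X}(\Lambda)$ being symmetric: replace $u_\epsilon$ below by $v_\epsilon\defeq v+\epsilon(T-\cdot)$ and use $u-c$ as a test function for $v_\epsilon$). Reasoning by contradiction, I would fix $(t_0,\mathbf{x}_0)\in\Lambda$ with $u(t_0,\mathbf{x}_0)>v(t_0,\mathbf{x}_0)$ (so $t_0<T$) and introduce the perturbation $u_\epsilon(s,\mathbf{y})\defeq u(s,\mathbf{y})-\epsilon(T-s)$ for $\epsilon>0$ small enough that still $(u_\epsilon-v)(t_0,\mathbf{x}_0)>0$, while $u_\epsilon(T,\cdot)= u(T,\cdot)\leq v(T,\cdot)$. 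Shifting any $\varphi\in\Acb u_\epsilon(t,\mathbf{x})$ by $\epsilon(T-\cdot)$ produces an element of $\Acb u(t,\mathbf{x})$ with $\Lc^{a}$-value shifted by $-\epsilon$, so the subsolution property of $u$ upgrades to the \emph{strict} property $\sup_{a\in U}[\Lc^{a}\varphi(t,\mathbf{x})+\ell(t,\mathbf{x},a)]\geq\epsilon$ for every $\varphi\in\Acb u_\epsilon(t,\mathbf{x})$.

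I would then apply Assumption~\ref{2016-07-11:00} to $\phi\defeq u_\epsilon-v\in\cpol(\Lambda)$ and the auxiliary mixed optimal stopping/control value
$$\Phi(t,\mathbf{x})\defeq\sup_{\mathbf{a}\in\Uc,\,\tau\in\Tc,\,\tau\geq t}\mathbb{E}\bigl[\phi(\tau,X^{t,\mathbf{x},\mathbf{a}})\bigr].$$
From $(t_0,\mathbf{x}_0)$, this furnishes an optimal pair $(\tau^*,\mathbf{a}^*)$ with $\Phi(t_0,\mathbf{x}_0)=\mathbb{E}[\phi(\tau^*,X^{t_0,\mathbf{x}_0,\mathbf{a}^*})]$ and $\phi(\tau^*,X^{t_0,\mathbf{x}_0,\mathbf{a}^*})=\Phi(\tau^*,X^{t_0,\mathbf{x}_0,\mathbf{a}^*})$ $\mathbb{P}$-a.s. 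Since $\phi(T,\cdot)\leq 0$ implies $\Phi(T,\cdot)\leq 0$, while $\Phi(t_0,\mathbf{x}_0)\geq\phi(t_0,\mathbf{x}_0)>0$, the identity $\mathbb{E}[\Phi(\tau^*,X^{t_0,\mathbf{x}_0,\mathbf{a}^*})]=\Phi(t_0,\mathbf{x}_0)>0$ forces the event $\{\Phi(\tau^*,X^{t_0,\mathbf{x}_0,\mathbf{a}^*})>0\}$ to have positive probability and to be contained in $\{\tau^*<T\}$. Picking an $\omega$ in this event and setting $(t^*,\mathbf{x}^*)\defeq(\tau^*(\omega),X^{t_0,\mathbf{x}_0,\mathbf{a}^*}(\omega))\in\Lambda$ yields a point with $t^*<T$ and $c\defeq\Phi(t^*,\mathbf{x}^*)=(u_\epsilon-v)(t^*,\mathbf{x}^*)>0$.

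At this point, the smoothness of $v$ makes the constant-shifted function $\varphi\defeq v+c\in C^{1,2}_{X}(\Lambda)$ a valid test function, with $\Lc^{a}\varphi=\Lc^{a}v$: indeed $(\varphi-u_\epsilon)(t^*,\mathbf{x}^*)=0$, and for every $\mathbf{a}\in\Uc$ and every stopping time $\tau\geq t^*$,
$$\mathbb{E}\bigl[(\varphi-u_\epsilon)(\tau\wedge T,X^{t^*,\mathbf{x}^*,\mathbf{a}})\bigr]=c-\mathbb{E}\bigl[(u_\epsilon-v)(\tau,X^{t^*,\mathbf{x}^*,\mathbf{a}})\bigr]\geq c-\Phi(t^*,\mathbf{x}^*)=0,$$
so that $\varphi\in\Acb u_\epsilon(t^*,\mathbf{x}^*)$ with localizing time $\H=T>t^*$. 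The strict subsolution property of $u_\epsilon$ at $(t^*,\mathbf{x}^*)$ then gives $\sup_{a\in U}[\Lc^{a}v(t^*,\mathbf{x}^*)+\ell(t^*,\mathbf{x}^*,a)]\geq\epsilon$, whereas the supersolution property of $v$ applied with the \emph{trivial} test function $v\in\Acu v(t^*,\mathbf{x}^*)$ (admissible since $v-v\equiv 0$) delivers $\sup_{a\in U}[\Lc^{a}v(t^*,\mathbf{x}^*)+\ell(t^*,\mathbf{x}^*,a)]\leq 0$, producing the desired contradiction.

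The hard part is not in the argument above --- which closely mirrors the implication $(iii)\Rightarrow(i)$ of Theorem~\ref{lemma:pp1} with $v+c$ playing the role the constant-shifted test function of Lemma~\ref{lemma:pp} --- but rather in the very statement of Assumption~\ref{2016-07-11:00}, which takes over the role of Theorem~\ref{teo:OS} in the controlled case. The existence of an optimal control/stopping pair for a mixed problem in infinite-dimensional path space is genuinely delicate, because the Markovianization-plus-compactness route used for Theorem~\ref{teo:OS} relies on Ascoli-Arzel\`a-type tools that are not directly available here; this is precisely why the authors extract it as a hypothesis rather than a theorem and why full (as opposed to partial) comparison for \eqref{eq:PPDE_fullynonlinear} is left open.
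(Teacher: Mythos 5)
Your proposal is correct and follows essentially the same route as the paper's proof: perturb by a time-linear term $\pm\epsilon(T-\cdot)$, apply Assumption~\ref{2016-07-11:00} to $\phi = u - v - \epsilon(T-\cdot)$ to extract an optimal $(\tau^*,\mathbf{a}^*)$, use the terminal inequality to force $\mathbb{P}(\tau^*<T)>0$, and at a chosen $(t^*,\mathbf{x}^*)$ on the stopping region plug the smooth function $v$ (shifted by a time-linear/constant term) into the test-function definition to get $\sup_a[\Lc^a v + \ell]\geq\epsilon$ from the subsolution side and $\leq 0$ from the supersolution side. The only cosmetic difference is that you carry the $\epsilon$-perturbation on $u$ (yielding a ``strict subsolution'' reformulation) while the paper carries it on $v$ (defining $v_\epsilon := v+\epsilon(T-\cdot)$ and testing it directly against $u$); the superfluous additive constant $c$ in your test function is harmless since $\Acb$ is insensitive to constant shifts.
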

\begin{proof}
Assume, by contradiction, that there exists $(\hat t,\mathbf{\hat x})\in \Lambda$  such that
  \begin{equation}\label{2016-07-13:01}
    q\coloneqq u(\hat t,\mathbf{\hat x})
    -
    v(\hat t,\mathbf{\hat x})>0.
  \end{equation}
By continuity, we can assume without loss of generality that $\hat t>0$.
Let $\epsilon>0$ be such that $q> 2\epsilon T$.
For $(t,\mathbf{x})\in \Lambda$, define
\begin{align*}
v_\epsilon(t,\mathbf{x})&\coloneqq
v(t,\mathbf{x})+\epsilon(T-\hat t)
\\
u_\epsilon(t,\mathbf{x})&\coloneqq
u(t,\mathbf{x})-\epsilon(T-\hat t)
\\
  \Phi^{(\epsilon)}(t,\mathbf{x})&\coloneqq
\sup_{\substack{
\tau\in \mathcal{T},\, t\leq \tau \leq T\\
\mathbf{a}\in \mathcal{U}
}
}  \mathbb{E} \left[
u(\tau,X^{t,\mathbf{x},\mathbf{a}})
-
v(\tau,X^{t,\mathbf{x},\mathbf{a}})
-\epsilon(T-\tau)
 \right].
\end{align*}
By Assumption~\ref{2016-07-11:00} applied to $\phi(t,\mathbf{x})\coloneqq u(t,x)-v(t,\mathbf{x})-\varepsilon(T-t)$,
there exist $\tau^*\in \mathcal{T}$, $t\leq \tau^*\leq T$, $\mathbf{a}^*\in \mathcal{U}$, such that
\begin{equation}\label{2016-07-13:00}
    \Phi^{(\epsilon)}(\hat t,\mathbf{\hat x})=
  \mathbb{E} \left[
 u(\tau^*,X^{\hat t,\mathbf{\hat x},\mathbf{a}^*})
-
 v(\tau^*,X^{\hat t,\mathbf{\hat x},\mathbf{a}^*})
-\epsilon (T-\tau^*)
 \right]
 \end{equation}
 and
 \begin{equation}\label{2016-07-13:00bis}
    \Phi^{(\epsilon)}(\tau^*,X^{t,\mathbf{x},\mathbf{a}^*})
\!=\!
u(\tau^*,X^{\hat t,\mathbf{\hat x},\mathbf{a}^*})
\!-\!
v(\tau^*,X^{\hat t,\mathbf{\hat x},\mathbf{a}^*})
\!-\!
\epsilon(T\!-\!
\tau^*)
\quad \mathbb{P}\mbox{-a.s.}
\end{equation}
By   \eqref{2016-07-13:01},
\eqref{2016-07-13:00},
and the fact that $u(T,	\cdot)-v(T,\cdot)\leq 0$ on $\mathbb{W}$,
 we have
$\mathbb{P}(\tau^* <T)>0$.
Combining with \eqref{2016-07-13:00}, we get the existence of  $\omega^*\in \Omega$ such that $\tau^*(\omega^*)<T$
and, setting $(t^*,\mathbf{x}^*)\coloneqq (\tau^*(\omega^*),X^{\hat t,\mathbf{\hat x},\mathbf{a}^*}(\omega^*))$,
\begin{multline}\label{2016-07-13:02}
u(t^*,\mathbf{x}^*)
-
v(t^*,\mathbf{x}^*)
-\epsilon(T-t^*)
=\Phi^\eps (t^*,x^*)\\
=
\sup_{\substack{
\tau\in \mathcal{T},\, t^*\leq \tau \leq T\\
\mathbf{a}\in \mathcal{U}
}
}  \mathbb{E} \left[
u(\tau,X^{t^*,\mathbf{x}^*,\mathbf{a}})
-
v(\tau,X^{t^*,\mathbf{x}^*,\mathbf{a}})
-\epsilon(T-\tau)
 \right].
\end{multline}

Now,
assume first that $v\in C^{1,2}_X(\Lambda)$.
In such a case,
\eqref{2016-07-13:02} entails $v_\epsilon\in
\underline{\Ac} u(t^*,\mathbf{x}^*)$.
By Definition \ref{def:visc_sol_PPDE_semi} of viscosity subsolution to \eqref{eq:PPDE_fullynonlinear}, we  have
\begin{multline}\label{2016-07-13:03}
 \epsilon t^* -\sup_{a\in U}
  [\mathcal{L}^a v(t^*,\mathbf{x}^*)+\ell(t^*,\mathbf{x}^*,a)]=\\
=
-  \sup_{a\in U}
  [\mathcal{L}^a v_\epsilon(t^*,\mathbf{x}^*)+\ell(t^*,\mathbf{x}^*,a)]\leq 0.
\end{multline}
Since
$v$ is a viscosity supersolution to \eqref{eq:PPDE_fullynonlinear}, we must have also
\begin{equation}\label{2016-07-13:04}
-  \sup_{a\in U}
  [\mathcal{L}^a v(t^*,\mathbf{x}^*)+\ell(t^*,\mathbf{x}^*,a)]\geq 0.
\end{equation}
Recalling that $t^*\geq \hat t>0$,
 \eqref{2016-07-13:03} and
 \eqref{2016-07-13:04} provides the  contradiction $\epsilon t^*\leq 0$.

Assume now that  $u\in C^{1,2}_X(\Lambda)$. Then
\eqref{2016-07-13:02} shows  that $u_\epsilon\in
\overline{\Ac} v(t^*,\mathbf{x}^*)$.
By definition of viscosity supersolution to \eqref{eq:PPDE_fullynonlinear}, we  have
\begin{multline}\label{2016-07-13:08}
    - \epsilon t^* -\sup_{a\in U}
  [\mathcal{L}^a u(t^*,\mathbf{x}^*)+\ell(t^*,\mathbf{x}^*,a)]
=\\
=
-  \sup_{a\in U}
  [\mathcal{L}^a u_\epsilon(t^*,\mathbf{x}^*)+\ell(t^*,\mathbf{x}^*,a)]\geq 0.
\end{multline}
Since
$u$ is a viscosity subsolution to \eqref{eq:PPDE_fullynonlinear}, we must have also
\begin{equation}\label{2016-07-13:09}
-  \sup_{a\in U}
  [\mathcal{L}^a u (t^*,\mathbf{x}^*)+\ell(t^*,\mathbf{x}^*,a)]\leq 0.
\end{equation}
We now conclude as in the previous case.
\end{proof}

\subsection{Further developments}
\label{SubS:Punctual}

When $H=\R^n$ and the path-dependent PDE \eqref{eq:PPDE_fullynonlinear} is semilinear --- corresponding to the case of the coefficient $\bar\sigma$ independent  of $a\in U$ --- and $\bar b$ satisfies the so-called \emph{structure condition}
\begin{equation}
\label{structure}
\bar b(t,\mathbf x,a)=\bar\sigma(t,\mathbf x)\bar{b}_0(t,\mathbf x,a)
\end{equation}
 then a proof of a comparison principle between viscosity sub/supersolutions is given in \cite{RTZ14}. This  proof  is inspired by the proof of the comparison principle \cite[Th.\,5.3]{CaffCabre}, which relies on the notion of punctual differentiability (see  \cite[Def.\,1.4.]{CaffCabre}) --- despite the usual proof of  the comparison principle in the framework of  viscosity solutions for second order PDEs, based on Ishii's lemma. This methodology seems to be implementable also in the present infinite-dimensional setting. We briefly recall and adapt to the present framework the main steps of the proof of \cite{RTZ14} leaving the argument at a descriptive level, as a  rigorous proof would go beyond the scopes of the present paper and is left for future research.

Let Assumptions
\ref{A:SHDE_control},  \ref{A:ell} and \ref{2016-07-11:00} hold.

\begin{enumerate}
\setlength\itemsep{0.05em}
\item 
\vskip-5pt
 In the proof of \cite{RTZ14}, the definition of viscosity solution in terms of jets is used. The PPDE is semilinear in this case and the definition of semijets needs to take into account also the term $\beta$ (see Remark \ref{rem:beta}). Precisely, following \cite{RTZ14}, given $u\in \cpol(\Lambda)$, we define the \emph{subjet} and \emph{superjet} of $u$ at $(t,\mathbf x)\in\Lambda$ as follows:
\begin{multline*}
\underline{\Jc} u(t,\mathbf{x})  \defeq  \big\{ (\alpha,\beta)\in \R\times H \colon  \exists\, \varphi\in\underline{\Ac} u(t,\mathbf{x}) \\
\mbox{such that }\varphi(s,\mathbf{y})=\alpha s + \langle\beta,\mathbf y_s\rangle,\ \forall\, (s,\mathbf{y})\in\Lambda \big \},
\end{multline*}
\vskip-1cm
\begin{multline*}
  \overline{\Jc} u(t,\mathbf{x})  \defeq  \big\{ (\alpha,\beta)\in \R\times H \colon  \exists\, \varphi\in\overline{\Ac} u(t,\mathbf{x}) \\
\mbox{such that }\varphi(s,\mathbf{y})=\alpha s + \langle\beta,\mathbf y_s\rangle,\ \forall\, (s,\mathbf{y})\in\Lambda \big \}.
\end{multline*}
Notice that, if $\varphi(s,\mathbf{y})=\alpha s + \langle\beta,\mathbf y_s\rangle$ for some $(\alpha,\beta)\in\R\times H$, then, using \eqref{structure}
,\begin{equation}\label{Lu_punctual}
\Lc^a\varphi(t,\mathbf x)  =  \alpha + \langle\beta,b(t,\mathbf x,a)\rangle  =  \alpha + \langle\bar\sigma^*(t,\mathbf x)\beta,\bar{b}_0(t,\mathbf x,a)\rangle_K,
\end{equation}
for all $(t,\mathbf x,a)\in\Lambda\times U$. Then, the main result of this first step would be the following equivalence (cf.\ \cite[Prop.\,3.8]{RTZ14}):
\begin{itemize}
\item[(R1)]
 \emph{$u\in C_p(\Lambda)$ is a viscosity subsolution (resp.\ supersolution) of the path-dependent PDE \eqref{eq:PPDE_fullynonlinear} if and only if:
  \begin{equation*}
    - \alpha - \sup_{a\in U}\big[\langle\bar\sigma^*(t,\mathbf x)\beta,\bar{b}_0(t,\mathbf x,a)\rangle_K + \ell(t,\mathbf x,a)\big]
\le 0, \quad (\mbox{resp.}\ \ge 0),
\end{equation*}
for every $(\alpha,\beta)\in\underline{\Jc} u(t,\mathbf{x})$ (resp.\ $(\alpha,\beta)
\in\overline{\Jc} u(t,\mathbf{x})$).}
\end{itemize}

\item The definition of viscosity solution in terms of jets can be  used in order to introduce the notion of \emph{punctual differentiability} (see  \cite[Def.\,3.10]{RTZ14}, inspired by  \cite[Def.\,1.4]{CaffCabre}). More precisely, \emph{given $u\in C_p(\Lambda)$, $p\geq 1$, and $(t,\mathbf x)\in\Lambda$, we say that $u$ is punctually $C^{1,2}_{X}(\Lambda)$ at $(t,\mathbf x)$ if
\[
\Jc u(t,\mathbf x)  \coloneqq   \textup{cl}\big(\underline{\Jc} u(t,\mathbf{x})\big) \cap \textup{cl}\big(\overline{\Jc} u(t,\mathbf{x})\big)  \neq  \emptyset,
\]
where $\textup{cl}(E)$ denotes the closure of the set $E\subset\R\times H$.}
\item \cite[Prop.\,4.17]{RTZ14} contains an important smoothness result, which in our context should be stated  as follows:
\begin{itemize}
\item[(R2)]\emph{Let $u\in C_p(\Lambda)$, $p\geq 1$, be a viscosity subsolution (or supersolution) of the path-dependent PDE \eqref{eq:PPDE_fullynonlinear}. Then, for every $(t,\mathbf x)\in\Lambda$ and $\mathbf{a}\in\Uc$, $u$ is punctually $C^{1,2}_{X}(\Lambda)$ at $dt\otimes\P^{t,\mathbf x,\mathbf{a}}(d\mathbf x)$-a.e.\ point $(s,\mathbf y)\in\Lambda$, where $dt$ denotes the Lebesgue measure on $[0,T]$, while the probability measure $\P^{t,\mathbf x,\mathbf{a}}$ on $\mathbb W$ denotes the law of the process $(X_s^{t,\mathbf x,\mathbf{a}})_{s\in[0,T]}$.}
\end{itemize} In this step it is strongly used the structure condition \eqref{structure}, which provides the equivalence of  all the probability measures $\{\P^{t,\mathbf x,\mathbf{a}}\}_ {\mathbf{a}\in\Uc}$.
\item
Using the notation of \cite{RTZ14}, we define:
\begin{equation}\label{F_control}
F(t,\mathbf x,z)  \coloneqq   \sup_{a\in U}\big[\langle z,\bar {b}(t,\mathbf x,a)\rangle_K + \ell(t,\mathbf x,a)\big], \  \forall\,(t,\mathbf x,z)\in\Lambda\times K.
\end{equation}
If $\overline b$ is bounded,
then
  $F$ is Lipschitz continuous with respect to the variable $z$, uniformly in  $(t,\mathbf{x})$.
Denote by $L_0\geq0$ the corresponding Lipschitz constant.
Then, the smoothness result stated at the previous point can be used  to prove the following  (cf.\ \cite[Prop.\,4.17]{RTZ14}):
\begin{itemize}
\item[(R3)] \emph{Let $p\geq 1$. If  $u^{(1)}\in C_p(\Lambda)$ (resp.\ $u^{(2)}\in C_p(\Lambda)$) is a viscosity subsolution (resp.\ supersolution) to PPDE \eqref{eq:PPDE_fullynonlinear}, then $w=u^{(1)}-u^{(2)}$ is a viscosity subsolution of the path-dependent PDE (cf.\ \cite[Eq.\,(4.10)]{RTZ14})
\begin{equation}\label{equation_w}
- \alpha - L_0|w(t,\mathbf x)| - L_0|\bar\sigma^*(t,\mathbf x)\beta|_K  \leq  0,
\end{equation}
for every $(\alpha,\beta)\in\underline\Jc w(t,\mathbf x)$, where $L_0$ is the Lipschitz constant of the function $F$ defined in \eqref{F_control}.}
\end{itemize}
\item Noticing that $w(T,\cdot)=u^{(1)}(T,\cdot)-u^{(2)}(T,\cdot)\leq0$ and that the identically null function is clearly a smooth supersolution to \eqref{equation_w}, we conclude, by the partial comparison principle (Theorem \ref{prop:PC}), that $u^{(1)}-u^{(2)}\leq0$ on $\Lambda$. This yields the \emph{comparison principle} for the path-dependent PDE \eqref{eq:PPDE_fullynonlinear}.
\end{enumerate}

\appendix
\renewcommand\thesection{}
\section{}
\renewcommand\thesection{\Alph{subsection}}
\renewcommand\thesubsection{\Alph{subsection}}

\newcounter{defcounter}
\setcounter{defcounter}{0}

\newenvironment{myequation}{\addtocounter{equation}{-1}
  \refstepcounter{defcounter}
  \renewcommand\theequation{A\thedefcounter}
  \begin{equation}} {\end{equation}}

\begin{proof}[\textbf{Proof of Theorem~\ref{teo:SW}}]
\emph{(\ref{2016-10-13:00})}
For the proof of the comparison principle 
 we refer to \cite{FGS14}, Theorem 3.50 (see also \cite[Theorem\ 3.2]{S94}).
We briefly
check that all the assumptions of Theorem 3.50 in \cite{FGS14} are satisfied.
The weak $B$-condition (3.2) in \cite{FGS14} corresponds to inequality \eqref{WeakBCondition}. Now, define the function $G\colon[0,T]\times H\times\mathbb R\times H\times S(H)\rightarrow\mathbb R$ as follows:
\[
G(t,x,r,p,X) = - \langle b(t,x),p\rangle
- \frac{1}{2}\text{Tr}\big[\sigma(t,x)\sigma^*(t,x)X\big] - F(t,x,r).
\]
Regarding Hypothesis 3.44 in \cite{FGS14}, 
assumptions \emph{(\ref{2016-10-13:02})}-\emph{(\ref{2016-10-13:01})} assure the uniform continuity of $\langle b(t,x),p\rangle$ and of $F(t,x,r)$ on bounded sets.
Moreover,
\begin{equation*}
  \begin{split}
    \big|\tr&[\sigma(t,x)\sigma^*(t,x)X] \!-\! \tr[\sigma(t',x')\sigma^*(t',x')X']\big| \notag \\
\leq & |(\sigma(t,x)\!-\!\sigma(t',x'))\sigma^*(t,x)X|_{L_1(H)} \!+\! |\sigma(t',x')(\sigma^*(t,x)\!-\!\sigma^*(t',x'))X|_{L_1(H)} \notag \\
&\! +\! |\sigma(t',x')\sigma^*(t',x')(X-X')|_{L_1(H)} \notag \\
\leq& |\sigma(t,x)\!-\!\sigma(t',x')|_{L_2(K;H)}
\big(|\sigma(t,x)|_{L_2(K;H)}
\!+\!
|\sigma(t',x')|_{L_2(K;H)}\big)
|X|_{L(H)} \\
& \!+\! |\sigma(t',x')|^2_{L_2(K;H)}|X\!-\!X'|_{L(H)}.
\end{split}
\end{equation*}
Then, using assumption
 \emph{(\ref{2016-10-13:03})}, we conclude that Hypothesis 3.44 in \cite{FGS14} is satisfied.

Hypothesis 3.45 in \cite{FGS14} with $\nu=0$ is guaranteed by \emph{(\ref{2016-10-13:04})}; Hypothesis 3.46 in \cite{FGS14} is straightforward to check;
Hypothesis 3.47 in \cite{FGS14} follows immediately from \emph{(\ref{2016-10-15:00})}.

Regarding Hypothesis 3.48 in \cite{FGS14}, 
let $R>0$,
$t\in[0,T]$,
 $x,x'\in X$, $|x|\leq R$, $|x'|\leq R$, $\epsilon>0$.
We have
\begin{myequation}\label{2016-10-13:07}
  \begin{split}
        \langle b(t,x')-b(t,x),B(x-x')\rangle
&\geq -|b(t,x)-b(t,x')|_{-1}|x-x'|_{-1}\\
&\geq -|B^{1/2}||b(t,x)-b(t,x')||x-x'|_{-1}\\
&\geq -2LR |b(\cdot,0)|_\infty |B||x-x'|_{-1},
\end{split}
\end{myequation}
where we used \emph{(\ref{2016-10-13:01})}.
Moreover, for $X,X'\in S(H)$ such that,
 for some $N\in \mathbb{N}\setminus \{0\}$,
 $X=P^*_NXP_N$, $X'=P^*_NX'P_N$, and
\[
-\frac{3}{\eps}\left( \begin{array}{cc}
BP_N & 0 \\
0 & BP_N\end{array} \right) \leq \left( \begin{array}{cc}
X & 0 \\
0 & -X'\end{array} \right) \leq \frac{3}{\eps}\left( \begin{array}{cc}
BP_N & -BP_N \\
-BP_N & BP_N\end{array} \right),
\]
we obtain, by standard computations,
\begin{myequation}\label{2016-10-13:09}
  \begin{split}
          \tr[\sigma(t,x')&\sigma^*(t,x')X'
-
 \sigma(t,x)\sigma^*(t,x)X]\geq \\
&\geq
 -\frac{3}{\epsilon}
|B||\sigma(t,x)-\sigma(t,x')|^2_{L_2(K;H)}
\geq 
 -\frac{3}{\epsilon}
L|B||x-x'|^2_{-1}.
\end{split}
\end{myequation}
Considering
\emph{(\ref{2016-10-13:06})},
\eqref{2016-10-13:07},
\eqref{2016-10-13:09},
 we see that 
Hypothesis 3.48 in \cite{FGS14} is satisfied.

Hypothesis 3.49 in \cite{FGS14} is satisfied with $\gamma=1$. Indeed,
for all $t\in[0,T]$, $x\in H$, $r\in \mathbb{R}$, $p,p'\in H$, $X,X'\in S(H)$, we can write
  \begin{equation*}
    \begin{split}
          G(t,x,r,p+p',X+X')
-&    G(t,x,r,p,X)\leq\\
&\leq
|b(t,x)||p'|+
\frac{1}{2}
|\tr [\sigma(t,x)\sigma^*(t,x)X']|\\
&\leq
 (M +L|x|)|p'|
+\frac{1}{2}|X'|_{L(H)}|\sigma(t,x)|^2_{L_2(K;H)}\\
&\leq
 (M +L|x|)|p'|
+\frac{1}{2}|X'|_{L(H)}
(M+|x|_{-1})
^2,
\end{split}
\end{equation*}
where
$M= \sup_{t\in[0,T]}\{|b(t,0)|+|\sigma(t,0)|_{L_2(K;H)}\}$.

Condition (3.75) in \cite{FGS14} can be easily checked by using \eqref{2016-10-14:00}.
Finally,
\eqref{2016-10-14:01} implies 
(3.77) in \cite{FGS14}.

\smallskip
\emph{(\ref{2016-10-14:02})}
For the proof of existence we 
 refer to Theorem 3.66 in \cite{FGS14},
whose assumptions are easy to verify in our case.
\end{proof}

\begin{proof}[\textbf{Proof of Theorem~\ref{Exist_control}}]
For the existence and uniqueness of the solution, and for the Lipschitz continuity of \eqref{eq:2014-10-29:aa_control} in $Z$, uniform in $(t,\mathbf{a})$,
see
 \cite[Th.\,3.6]{Roses2016a}.
For the
 continuity
in $(t,Z)$, for fixed $\mathbf{a}\in \mathcal{U}$,
see
 \cite[Th.\,3.14]{Roses2016a}.

We now show the equicontinuity of the family \eqref{eq:2016-08-05:04}.
  Due to the Lipschitz continuity in $Z$, uniform in $(t,\mathbf{a})$,
to prove the latter
\eqref{eq:2016-08-05:04} it is sufficient to prove the equicontinuity of the family
\begin{myequation}
  \label{eq:2016-08-05:05}
  \{X^{\cdot,Z,\mathbf{a}}\colon [0,T]
\rightarrow \mathcal{H}^p_\mathcal{P}(H)\}_{\mathbf{a}\in \mathcal{U}},
\end{myequation}
for every fixed $Z\in \mathcal{H}^p_\mathcal{P}(H)$.
Let $Z\in \mathcal{H}^p_\mathcal{P}(H)$, $0\leq t\leq t'\leq T$, $\Delta X^{t,t',Z,\mathbf{a}}_s\coloneqq X^{t,Z,\mathbf{a}}_s-X^{t',Z,\mathbf{a}}_s$, for $s\in[0,T]$ and $\mathbf{a}\in \mathcal{U}$.
First notice that, if $s\in [0,t]$, then $\Delta X^{t,t',Z,\mathbf{a}}_s=0$.
Moreover,
by using
the definition of mild solution
and by applying standard estimates
to the integrals
appearing in equality \eqref{eq:2016-08-05:06}, by means of the factorization formula (\cite[Lemma 1.114]{FGS14})
and of the Burkholder-Davis-Gundy inequality for the stochastic integral (\cite[Theorem 1.111]{FGS14}), we obtain
\begin{myequation}\label{2016-08-06:01}
\begin{split}
 \mathbb{E} \left[ \sup_{s\in[t,t']}
    |\Delta X^{t,t',Z,\mathbf{a}}_s|^p \right]&\leq
  C
   \left(
     \mathbb{E} \left[
       \sup_{s\in[t,t']}
       |Z_s-e^{(s-t)A}Z_t|^p \right]\right.\\
&\left.
\phantom{  \sup_{]}     }
     +
      \left(1+
      w(t'-t)
      \int_0^T |X_{r\wedge \cdot}^{t,Z,\mathbf{a}}|^p_{\mathcal{H}^p_\mathcal{P}(H)}dr\right)
   \right),
   \end{split}
\end{myequation}
where $C$ is a constant depending only on $T,p,M,\hat M,\gamma,\sup_{r\in[0,T]}|e^{rA}|_{L(H)}$, and $w$ is a modulus of continuity depending only on $p,\gamma$.

Moreover, by writing $X^{t,Z,\mathbf{a}}_s=X^{t',X_{t'\wedge \cdot}^{t,Z,\mathbf{a}},\mathbf{a}}_s$, for $s\in[t',T]$, and by recalling the uniform Lipschitz continuity of
\eqref{eq:2014-10-29:aa_control} with respect to $Z$, we have
\begin{myequation}
  \label{eq:2016-08-06:00}
    \mathbb{E} \left[ \sup_{s\in[t',T]}
    |\Delta X^{t,t',Z,\mathbf{a}}_s|^p \right]
  \leq \hat C |X^{t,Z,\mathbf{a}}_{t'\wedge \cdot}-Z_{t'\wedge \cdot}|^p_{\mathcal{H}^p_\mathcal{P}(H)},
\end{myequation}
where $\hat C$ is independent of $Z,\mathbf{a},t,t'$.
Notice that the right hand side  of
\eqref{eq:2016-08-06:00} can be estimated
through
\eqref{2016-08-06:01}.
We then finally obtain
{\begin{myequation}\label{2016-08-06:03}
\begin{split}
  |X^{t,Z,\mathbf{a}}-X^{t',Z,\mathbf{a}}|^p_{\mathcal{H}^p_\mathcal{P}(H)}
&\leq
  C(1+\hat C)
   \left(
     \mathbb{E} \left[
       \sup_{s\in[t,t']}
       |Z_s-e^{(s-t)A}Z_t|^p \right]\right.\\
&\left.\phantom{  \sup_{]}     }
     +
      \left(1+
      w(t'-t)
      \int_0^T |X_{r\wedge \cdot}^{t,Z,\mathbf{a}}|^p_{\mathcal{H}^p_\mathcal{P}(H)}dr\right)
   \right).
   \end{split}
\end{myequation}}
Estimate \eqref{2016-08-06:03} provides  the following information.
\begin{enumerate}[(a)]
\setlength\itemsep{0.05em}
\item \vskip-5pt
 Choosing $t'=T$ and applying Gronwall's inequality, we see that the set $\{X^{t,Z,\mathbf{a}}\}_{\substack{t\in[0,T],\mathbf{a}\in \mathcal{U},Z\in \mathbf{B}}}$ is
 bounded in $\mathcal{H}^p_\mathcal{P}(H)$ for each $\mathbf{B}\subset \mathcal{H}^p_\mathcal{P}(H)$ bounded.
\item Using the uniform boundedness just observed, we see from \eqref{2016-08-06:03} that, for fixed $Z$,
$$
\lim_{t'\rightarrow t^+}\sup_{\mathbf{a}\in \mathcal{U}}
  |X^{t,Z,\mathbf{a}}-X^{t',Z,\mathbf{a}}|_{\mathcal{H}^p_\mathcal{P}(H)}=0,
$$
which provides the desired
equicontinuity of
\eqref{eq:2016-08-05:05} and concludes the proof.\qedhere
\end{enumerate}
\end{proof}

\section*{Acknowledgements}
The authors thank the anonymous Referee for his/her useful remarks that helped to  improve  the paper.


\end{document}